\documentclass[11pt,reqno,oneside]{amsart}

\usepackage[title,header]{appendix}
\usepackage[letterpaper]{geometry}
\usepackage[utf8]{inputenc}
\usepackage[OT4]{fontenc}
\usepackage{amsmath}
\usepackage{amsthm}
\usepackage{amsfonts}
\usepackage{amssymb}
\usepackage{bm}
\usepackage{bbm}
\usepackage{color, soul}
\usepackage{stmaryrd}
\usepackage[table]{xcolor}
\usepackage{tikz}
\usepackage{hyperref}
\usepackage{mdframed}
\usepackage{enumerate}
\usepackage{longtable}

\theoremstyle{plain}
\newtheorem{theorem}{Theorem}[section]
\newtheorem{lemma}[theorem]{Lemma}

\newtheorem{corollary}[theorem]{Corollary}

\newtheorem{claim}[theorem]{Claim}
\theoremstyle{definition}
\newtheorem{definition}[theorem]{Definition}

\newtheorem{remark}[theorem]{Remark}

\newcommand{\numberthis}{%
  \refstepcounter{equation}\tag{\theequation}
}
\newcommand{\1}{\mathbbm{1}}

\DeclareMathOperator{\PP}{P}
\DeclareMathOperator{\EE}{E}

\DeclareMathOperator{\dom}{dom}
\DeclareMathOperator{\rx}{revx}

\newcommand{\cE}{\mathcal{E}}

\newcommand{\cP}{\mathcal{P}}

\newcommand{\cF}{\mathcal{F}}

\newcommand{\bbN}{\mathbb{N}}
\newcommand{\bbR}{\mathbb{R}}
\newcommand{\bbZ}{\mathbb{Z}}
\newcommand{\bbQ}{\mathbb{Q}}

\newcommand{\ignore}[1]{}

\newcommand{\Ind}{\mbox{}}
\newcommand{\IndI}{\mbox{}\qquad}
\newcommand{\IndII}{\mbox{}\qquad\qquad}
\newcommand{\IndIII}{\mbox{}\qquad\qquad\qquad}
\newcommand{\IndIIII}{\mbox{}\qquad\qquad\qquad\qquad}
\definecolor{DSgray}{cmyk}{0,0,0,0.7}
\definecolor{DSred}{cmyk}{0,0.7,0,0.7}
\newcommand{\Authornote}[2]{}

\DeclareFontFamily{U} {MnSymbolC}{} \DeclareSymbolFont{MnSyC} {U}
{MnSymbolC}{m}{n} \SetSymbolFont{MnSyC} {bold}{U} {MnSymbolC}{b}{n}
\DeclareFontShape{U}{MnSymbolC}{m}{n}{ <-6> MnSymbolC5 <6-7>
  MnSymbolC6 <7-8> MnSymbolC7 <8-9> MnSymbolC8 <9-10> MnSymbolC9
  <10-12> MnSymbolC10 <12-> MnSymbolC12}{}
\DeclareFontShape{U}{MnSymbolC}{b}{n}{ <-6> MnSymbolC-Bold5 <6-7>
  MnSymbolC-Bold6 <7-8> MnSymbolC-Bold7 <8-9> MnSymbolC-Bold8 <9-10>
  MnSymbolC-Bold9 <10-12> MnSymbolC-Bold10 <12-> MnSymbolC-Bold12}{}
\DeclareMathSymbol{\ftl}{\mathbin}{MnSyC}{202}
\DeclareMathSymbol{\ftu}{\mathbin}{MnSyC}{201}
\DeclareMathSymbol{\ftr}{\mathbin}{MnSyC}{200}

\newcommand{\ra}{\rightarrow}
\newcommand{\mbf}[1]{\ensuremath{\boldsymbol{#1}}}

\renewcommand{\bar}[1]{\overline{#1}}
\newcommand{\ttm}{\mathtt{m}}
\newcommand{\ttu}{\mathtt{u}}
\newcommand{\blambda}{\bar{\lambda}}

\newcommand{\tpi}{\tilde{\pi}}
\newcommand{\tPi}{\tilde{\Pi}}
\newcommand{\ttR}{\mathtt{R}}
\newcommand{\ttB}{\mathtt{B}}

\graphicspath{{./Figures/}}

\title{On Spatial Matchings: The First-in-First-Match Case}

\author{Mayank Manjrekar}
\thanks{\hspace{-1.5em} UT Austin, USA. E-mail:
  {\tt{mmanjrekar@math.utexas.edu}}\\This work is supported by award
  from the Simons Foundation (award number 197982) to the University
  of Texas at Austin.}

\date{}

\begin{document}

\begin{abstract}
  In this paper, we describe a process where two types of particles,
  marked by the colors \emph{red} and \emph{blue}, arrive in a domain
  $D$ at a constant rate and are to be matched to each other according
  to the following scheme.  At the time of arrival of a particle, if
  there are particles of opposite color in the system within a
  distance one from the new particle, then, among these particles, it
  matches to the one that had arrived the earliest. In this case, both
  the matched particles are removed from the system.  Otherwise, if
  there are no particles within a distance one at the time of the
  arrival, the particle gets added to the systems and stays there
  until it matches with another point later.  Additionally, a particle
  may depart from the system on its own at a constant rate, $\mu>0$,
  due to a loss of patience.  We study this process both when $D$ is a
  compact metric space and when it is a Euclidean domain, $\bbR^d$,
  $d\geq 1$.

  When $D$ is compact, we give a product form characterization of the
  steady state probability distribution of the process.  We also prove
  an FKG type inequality, which establishes certain clustering
  properties of the red and the blue particles in the steady state.
  When $D$ is the whole Euclidean space, we use the time-ergodicity of
  the construction scheme to prove the existence of a stationary
  regime.
\end{abstract}
\maketitle

\section{Introduction}
\label{sec:intro}
Let $D$ be a metric space, with complete metric $d$, and let $\lambda$
be a Radon measure defined over it.  Suppose for now that $D$ is
compact, so that $\lambda(D)<\infty$.  We study the time-evolution of
a continuous time stochastic Markov jump process $\{\eta_t\}_{t\geq0}$
whose state is defined by an ordered \emph{configuration} of two types
of points in $D$.  The two types are assigned to be the colors
\emph{red} and \emph{blue}, and referred in short by the letters
$\ttR$ and $\ttB$ respectively.  A configuration here refers to a
locally finite collection of points.  When $D$ is compact, a
configuration consists of finite number of points.

The process evolves over the space of ordered configurations on
$D\times\{\ttR,\ttB\}$ as follows.  New particles of each type arrive
according to an independent Poisson point process on $D\times \bbR^+$
with intensity $\lambda\times\ell$, where $\ell$ is the Lebesgue
measure on $\bbR^+$.  Suppose, for instance, that a red particle
arrives at time $t>0$, at location $x\in D$.  Then we look for the
first blue particle in the ordered sequence $\eta_{t-}$ whose distance
to location $x$ is less than $1$.  If there is such a particle,
$p\in\eta_{t-}$, the new state $\eta_t$ is obtained by removing the
particle $p$ from $\eta_{t-}$, while keeping the order of the
remaining elements fixed.  If there is no such particle, then the new
state is obtained by adding a particle, with location $x$ and mark
$\ttR $, to $\eta_{t-}$.  In this case, the order within the elements
of $\eta_{t-}$ is preserved and the new particle is placed at the end
of the sequence $\eta_{t-}$.  The arrival of a blue particle is
handled similarly.  Additionally, any particle in the configuration is
removed at a constant rate $\mu>0$, while preserving the order of the
remaining particles.  Note that if $\eta_0$ is the empty
configuration, then the ordering of particles in $\eta_t$ is simply
the order in which those particles have arrived in the system.  We
will call this the First-in-first-match (FIFM) spatial matching
process.

Figure~\ref{fig:illustration} gives an illustration for the above
dynamics.

\begin{figure}[ht]
  \centering \def\svgwidth{0.8\textwidth}
  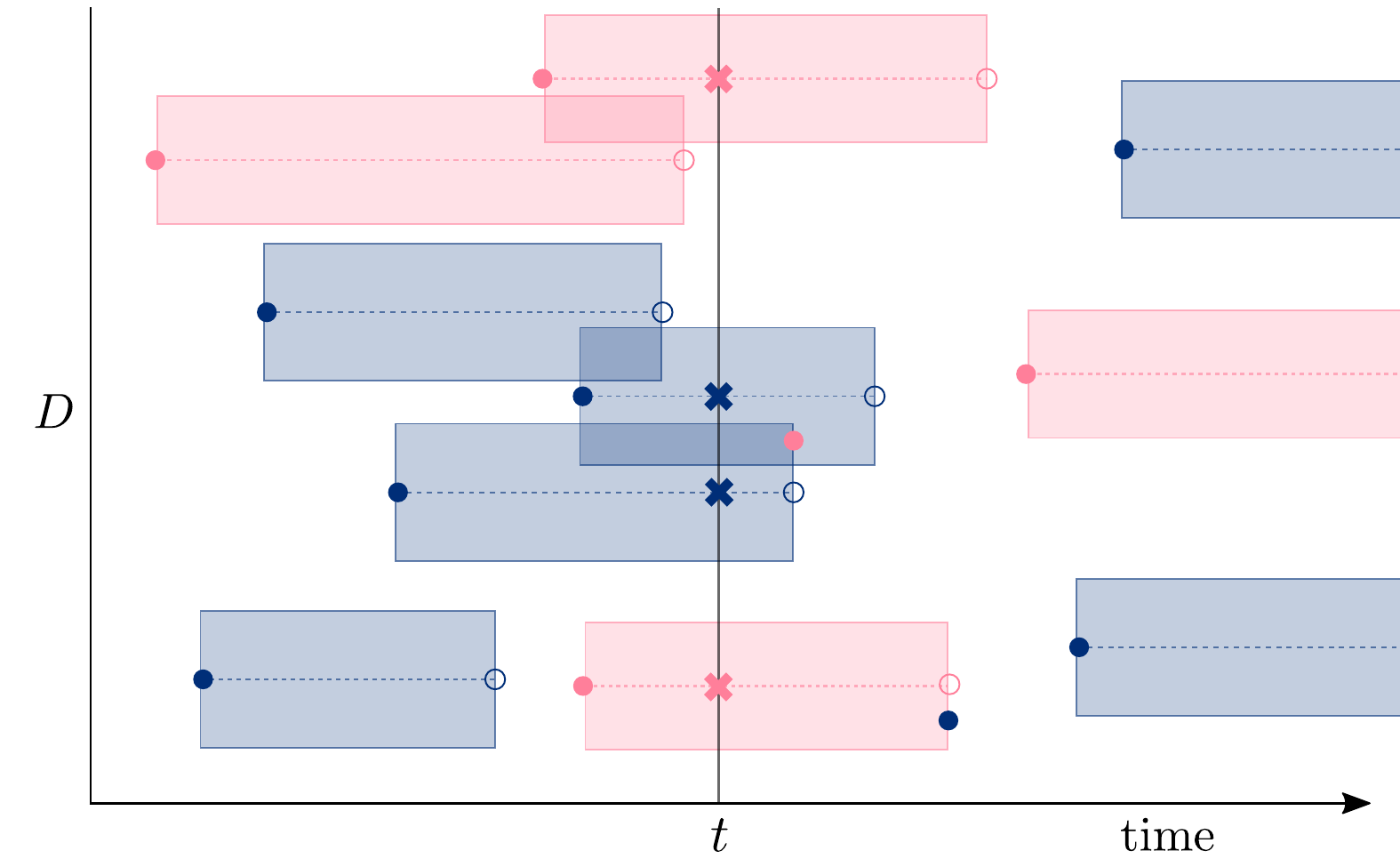
  \caption{An illustration of the FIFM spatial matching process.  The
    vertical dimension represents the set $D$.  The rectangles
    represent the lifetimes of particles in the system -- so, the
    vertical dimension of the rectangle represents the spatial range
    of interaction of a particle, a solid disk to the left of a
    rectangle represent its arrival, and a hollow circle at the right
    represents its departure.  The set of particles present in
    $\eta_t$ are marked by crosses; these are those particles whose
    rectangles intersect the ``vertical line'' at time
    $t$.}\label{fig:illustration}
\end{figure}

\subsection{Motivation and previous work}
\label{sec:motivation}
The motivation for studying this problem comes from modern
shared-economy markets, where individuals engage in monetized exchange
of goods that are privately owned in a via peer-to-peer marketplace.
Examples of such marketplaces include ride-sharing networks, such as
Uber or Lyft, and renewable energy networks with distributed
generation of power.  Here, consumers and producers can be viewed as
individuals distributed in an abstract space, who engage in a
transaction with others in close proximity.  The abstract space could
model factors such as location, product preferences, price and
willingness to pay, etc. In the example of ride-sharing network, the
position of an individual would correspond to its physical location,
while in a renewable energy network, the position could model a
combination of physical location and price.  In this study, our goal
is to comment on the spatial distribution of individuals in the
long-run, under a well-defined matching scheme such as the one
described in the introduction.

The underlying dynamics in our model can be viewed from a queuing
theoretic viewpoint.  Most queuing theoretic models study systems
where there is an inherent asymmetry between customers and servers.
Customers are usually transient agents that arrive with some load, and
depart on being processed.  Servers meanwhile are present during the
whole life-time of the study of the stochastic process, and serve the
customers according to a given policy.  In the literature, there are
only a few examples of queuing systems where customers and servers are
treated as symmetric agents that \emph{serve} each other.  The
double-ended queuing model discussed in \cite{kashyap1966double}
studies a model for a taxi-stop where taxis and customers arrive
independently according to two Poisson arrivals.  If a taxi (or a
customer) arrives at the taxi-stop and finds a waiting customer (taxi)
waiting, then it matches instantly, using say a first-come-first-serve
(FCFS) policy, and both agents depart.  Otherwise, the taxi (customer)
waits until it is matched with a customer (taxi) that arrives later.

The FCFS bipartite matching model that was introduced in
\cite{caldentey2009fcfs}, and later studied in some generality in
\cite{Adan2018}, is another such model.  In this model, the customers
and servers belong to finite sets of types, $C$ and $S$ respectively,
which determine whom they can be matched to.  The compatibility of
matches between the various types of customers and servers is
expressed in terms of a bipartite graph $G=(C, S,\cE)$, where
$\cE\subset C\times S$.  The process, $\{\eta_t\}_{t\in \bbN}$, is the
ordered list of unmatched customers and servers arriving before time
$t\in \bbN$.  At each time $t\in\bbN^+$, one customer, $c_t\in C$, and
one server, $s_t\in S$, arrive to the system.  Here,
$\{c_t\}_{t\in \bbN^+}$ and $\{s_t\}_{t\in\bbN^+}$ are independent
sequences of i.i.d.  random elements of $C$ and $S$, with
distributions $\alpha$ and $\beta$, respectively.  $\eta_{t}$ is
obtained from $\eta_{t-1}$, $c_t$ and $s_t$ by matching $c_t$ and
$s_t$ from elements in $\eta_t$, if possible, using the FCFS policy,
and removing the matched pairs.  This model is called the \emph{FCFS
  bipartite matching model}.  In this series of works
(\cite{caldentey2009fcfs,adan2012exact,Adan2018}), the authors derive
a product form distribution for the steady state under the so-called
\emph{complete resource pooling} condition:
\begin{align*}
  \alpha(A)< \sum_{(x,y)\in A\times S\subseteq E}\beta(y),\ \forall
  A\subsetneq C,
\end{align*}
or equivalently,
\begin{align*}
  \beta(A)< \sum_{(x,y)\in C\times A\subseteq E}\alpha(x),\ \forall
  A\subsetneq S.
\end{align*}
The authors also provide expressions for performance measures in the
steady state, such as the matching rates between certain type of
pairs, and waiting times of agents.  These expressions are
computationally hard to evaluate, owing to the hardness in computing
the normalizing constant in the product form distribution.

We now briefly discuss variants of the FCFS bipartite matching model.
Bu{\v{s}}i{\'c} et.al (\cite{buvsic2013stability}) generalize the
bipartite matching model by dropping independence of arriving types
and considering other matching policies.  B\"uke and Chen
\cite{buke2017fluid} study a model where the matching policy is
probabilistic.  In their model, when a customer (server) arrives in a
system, it looks at the possible matches and independently of
everything else, selects one using a probability distribution.  There
is also a positive probability of not finding any suitable server
(customer), in which case it starts to wait for a compatible server
(customer).  They also consider models where the users are impatient
and may depart if they are not matched by a certain time.  An exact
analysis of these models becomes quite intractable and in
\cite{buke2017fluid} the authors study the fluid and diffusive scaling
approximations of these systems.

The model we consider in this paper is essentially a continuous time
and continuous space version of the model studied in \cite{Adan2018},
with the added feature that particles may also depart on their own due
to a loss of patience.  This spatial matching model is related to the
FCFS bipartite matching model in the following sense.  Just like the
classes of customers and servers in that model, in our model, we still
have two classes, the red and blue particles; each particle has a
location in $D$ which is akin to the types within a class; and two
particles are supposed to be compatible in the sense of the FCFS
bipartite matching model, if they are within a distance one from each
other.

In this paper, in the case when $D$ is compact, we derive a product
form characterization of the steady state distribution of the process
we consider.  The analysis needed to obtain this product form
distribution is an extension of the analysis in \cite{Adan2018} to the
continuum.  We guess the reversed process and the steady state, and
then check the local balance conditions to get the product form
distribution in Theorem~\ref{thm:invMeasure}.

The two particle Widom-Rowlinson (WR) model is related to the
distribution of the unordered configuration, $\tilde\eta$, in the
steady state of our model.  The one and the two particle WR models
were defined in \cite{widom1970new} as a mathematical model for the
study of liquid-vapor phase transition in physical systems.  The two
particle WR model is a point process that consists of two types of
particles, $\ttR$ and $\ttB$, as in the steady state of our process.
This model on a compact domain can be described as the union of two
Poisson point processes with intensities $\lambda_{\ttR }$ and
$\lambda_{\ttB}$, conditioned on the event that there are no two
particles of opposite types within a distance one from each other. On
the infinite Euclidean domain, $\bbR^d$, it is defined as a Markov
random field (see \cite{van2000markov}), with Papangelou conditional
intensity $\varphi((x,\ttR),\eta)=\lambda_\ttR\1(d(x,\eta^\ttB)>1)$
and $\varphi((x,\ttB),\eta)=\lambda_\ttB\1(d(x,\eta^\ttR)>1)$, where
$\eta^\ttR$ and $\eta^\ttB$ are the collection of red and blue points
of $\eta$.  The single particle WR model is obtained by marginalizing
over one of the particles in the two particle WR model.  The two
particle WR model is interesting as it is the first continuum Markov
random process where a phase transition has been rigorously
established.  It was basically established that, in the phase diagram,
along the line $\lambda_{\ttR }=\lambda_{\ttB}=\lambda$, symmetry is
broken when $\lambda$ is large enough.  This was first shown in
\cite{ruelle1971existence} using an adaptation of Peierl's argument.
\cite{chayes1995analysis,giacomin1995agreement} independently give
modern self-contained proofs of this phenomenon using percolation
based arguments.  The key ingredient in the proof in
\cite{chayes1995analysis} is the observation that when we disregard
the types of the particles, the resulting model, called the Gray WR
model, is the continuum version of the random cluster model.  The Gray
model, in particular, satisfies the FKG property (with the usual
lattice structure) and the corresponding positive association
inequalities, that are crucial in showing existence of a percolation
thresholds, and consequently a phase transition in this model.

Contrary to the two particle WR model, we believe that in our model,
the unordered collection of the points in the steady state is not a
Markov points process. The definition of a Markov point process (see
Chapter 2 of \cite{van2000markov}) requires, first, that there exists
a symmetric reflexive relation $\sim$ over the domain, and second,
that the Papangelou conditional intensity at a point $x$ depend on the
configuration $\eta$ only through the points in $\eta$ that are
related to the $x$ by $\sim$. In our case, we are unable to show the
existence of such a symmetric relation. However, we are able to show
that the Papangelou conditional intensity at a point depends on the
clusters of overlapping unit balls that intersect with the unit ball
around the point.

In spite of this limitation, we show that the point process satisfies
an FKG lattice property, with a specific lattice structure, similar to
one satisfied by the two particle WR model (see Section 2 of
\cite{chayes1995analysis}). The lattice structure as follows: we say
$\eta>\eta'$ if and only if $\eta^{\ttR}\supset{\eta'}^{\ttR}$ and
$\eta^{\ttB}\subset{\eta'}^\ttB$. The resulting positive association
inequality can be used in conjunction with the results in
\cite{blaszczyszyn2014comparison}, to prove that the points of the
same type are weakly-super Poissonian. This is interesting since any
exact analysis of the clustering of the steady state from its product
form distribution is prohibitively hard as there is no closed
form expression for its normalizing constant. In fact, in discrete
systems, it is a $\sharp P$-complete problem to compute the
normalizing constant (\cite{adan2012exact}).

In this paper, we also consider the same matching dynamics in the
infinite Euclidean domain $\bbR^d$.  In this regime, using coupling
from the past based arguments, we give a formal
definition and a construction of the process, and show that there
exists a stationary regime for this process. The existence of the
stationary regime is obtained using certain coupling from the past
ideas that were developed in \cite{baccelli2017mutual}.

In the following sections, we will discuss the notation required to
formally define our model.  Other notation will be required as we go
along -- see Appendix~\ref{sec:tabl-nota} for a table of notation.  We
begin Section~\ref{sec:form-desc} with the formal definition of the
model in a compact domain. In Section~\ref{sec:exis-uniq-stea-stat},
we give a coupling based argument that the steady state exists and is
unique, and in Section~\ref{sec:prod-form-char}, we present the
product form distribution for this steady state. Then, in
Section~\ref{sec:clus-prop-fkg}, we introduce and prove the FKG
lattice property satisfied by the unordered version product form
distribution.  We then proceed to study the model in the infinite
Euclidean domain, in Section~\ref{sec:defi-eucl}. In
Section~\ref{sec:stat-eucl}, we give a construction and in
Section~\ref{sec:CFTP}, we give the construction of the stationary
regime.

\subsection{Notation}
\label{sec:notation}
\hypertarget{def:notation}{}
Let $S$ be any metric space, endowed with a Radon measure,
$\lambda_S(\cdot)$.  We use $M(S)$ to denote the space of simple
counting measures on $S$.  There is one-to-one correspondence between
$M(S)$ and the space of locally finite collection of points in $S$.
$M(S)$ is equipped with the $\sigma$-algebra $\cF$ generated by the
maps $\gamma\mapsto\gamma(B)$, $B\in \mathcal{B}(S)$, where
$\mathcal{B}(S)$ is the Borel $\sigma$-algebra on $S$.  In our
presentation, we will frequently abuse notation and use the same
variable to denote both an element of $M(S)$ and its support, which is
a subset of $S$.

Every particle in our model also carries information about its
patience.  To encode this, we need the notion of a marked counting
measure.  A marked simple counting measure on $S$, with marks in a
space $K$, is a locally finite simple counting measure $\gamma$ on
$S\times K$ such that its projection $\gamma(\cdot\times K )$ is an
element of $M(S)$.  We denote the space of all such measures by
$M(S,K)$.

We will also require the definition of the space of locally-finite
totally-ordered collection of points in the space $S$, $O(S)$.  For
any $\xi\in O(S)$, the order within the elements of $\xi$ will be
denoted by $<_\xi$.  The order will be used to indicate the priority
of the particles when matching with other particles.  So, if the state
of the system is $\xi\in O(S)$ and an incoming point $x$ is compatible
with both $y_1<_\xi y_2$, then it prefers $y_1$ over $y_2$.  $O(S)$
has a natural projection onto $M(S)$, obtained by dropping the order
within its elements -- for any $\xi\in O(S)$, the unordered collection
is denoted by $\tilde\xi$. For compact $S$, $O(S)$ may be canonically
identified with
$\sqcup_{n=0}^\infty \{x\in S^n:x_i\neq x_j,\forall 1\leq i<j\leq
n\}$.  Finally, the space of totally-ordered marked locally-finite
collection of points, with marks in $K$, will be denoted by $O(S,K)$.

For any $\gamma\in M(S)$ (or $O(S)$), we will use the notation
$|\gamma|$ to denote the number of elements in $\gamma$, i.e.,
$|\gamma|=\gamma(S)$.

\hypertarget{def:oppositecolor}{}\hypertarget{def:colorset}{} As
mentioned in the introduction, the symbols $\ttR $ and $\ttB$ will be
used to denote the types red and blue respectively.  Moreover, we will
let $\mbf{C}=\{\ttR ,\ttB\}$, and let a line over a color denote the
opposite color, i.e., $\bar\ttR=\ttB$ and $\bar\ttB=\ttR$.

\section{First-in-First-Match Matching Process on Compact Domains}
\label{sec:form-desc}
\hypertarget{def:N}{}\hypertarget{def:formaldesc}{}\hypertarget{def:p-x-b-x}{}\hypertarget{def:gamma-sup-x}{}

In this section, we first give a formal definition of the process on a
compact domain.  Let $D$ be a compact metric space with a Radon
measure $\lambda$.  The state of the process will contain information
about the location, color and the order of arrival of the particles
present in the system.  Thus, the state space will be the set of
totally-ordered collection of particles, with location in $D$ and with
marks in the set $\mbf{C}=\{\ttR,\ttB\}$, namely $O(D,\mbf{C})$.  The
order represents the order of arrival of particles into the system,
and hence represents their priority when two particles are in
contention to be matched to the same particle.

We will require the following notation to describe the evolution of
the process.  For a point $x\in D\times\mbf{C}$, we denote the
projection onto $D$ by $p_x$ and denote the projection onto $\mbf{C}$
by $c_x$.  For any point $x\in D\times \mbf{C}$, we denote the set of
incompatible points of opposite color by
$N(x):=B(p_{x},1)\times \{\bar{c_x}\}$, where $B(z,r)$ denotes the
ball of radius $r$ centered at $z$.  For any subset
$A\subseteq D\times \mbf{C}$, we set $N(A):=\cup_{x\in A }N(x)$.  Let
$\blambda:=\lambda\otimes m_c$, where $m_c$ is the counting measure on
$\mbf{C}$.  For any $\gamma\in O(D,\mbf{C})$ and $x\in \gamma$, let
$\gamma^x$ be the element of $O(D,\mbf{C})$ formed by
$\{y\in \gamma: y<_\gamma x\}$ ordered as in $\gamma$.  Further, if
$\gamma$ is represented as a list $(x_1,\ldots,x_n)$, then for any
$i$, $1\leq i\leq n$, we set
$\gamma_1^{i-1} :=\gamma^{x_i} =(x_1,\ldots,x_{i-1})$.  The region of
highest priority of a particle $x$ in $\gamma$, denoted $W_{\gamma,x}$
(or just $W_x$ if the context is clear), is defined to be the set
$N(x)\backslash N(\gamma^{x})$.

Let us recall the description of the dynamics in terms of the above
notation.  We consider a Markov jump process,
$\{\eta_t\}_{t\in\bbR}\subset O(D,\mbf{C})$.  Suppose that a new
particle, $y\in D\times\mbf{C}$, arrives at time $t$.  If the ordered
collection $\eta_t\cap N(y)$ is non-empty, then the particle $y$
\emph{matches} to the lowest-ranked particle in this set and the
matched particle is removed from $\eta_t$.  Equivalently, $y$ matches
to $x\in \eta_t$ if and only if $y\in W_{\eta_t,x}$.  Otherwise, $y$
is added to the ordered set $\eta_t$ at the end, so that $y>x$ for all
$x\in \eta_t$, while the order among the elements of $\eta_t$ is
preserved.  Additionally, independent of everything else, particles
may depart on their own when they lose patience at rate $\mu>0$.

This description fixes the form of the generator of the process, which
is given by
\begin{align}
  \label{eq:generator}
  Lf(\eta):=
  &\sum_{x\in\eta}\left(\mu+\blambda(W_x)\right) [f(\eta\backslash
    x) -f(\eta)]+\int_{D\times\mbf{C}}\1(x\notin
    N(\eta))[f(\eta,x)-f(\eta)]\blambda(dx),
\end{align}
where $f$ is a measurable function defined over $O(D,\mbf{C})$. In the
following we give an explicit construction of a process which will
serve as the formal definition of our process. It can be easily
verified that, if $\{\eta_t\}$ is the constructed process, then
$$\lim_{t\ra 0+}\frac{1}{t}\EE
[f(\eta_t)-f(\eta_0)|\eta_0]=Lf(\eta_0),$$ for any bounded continuous
function $f$ over $O(D,\mbf{C})$ (we refrain from identifying the full
domain of the generator). For us, the form of the generator will be
useful in characterizing the stationary distribution, while the
explicit construction will be useful later in the construction of the
process on $\bbR^d$.

\hypertarget{def:Phi}{} Let $\Phi$ be a Poisson point process on
$D\times \bbR^+$, with i.i.d.  marks in $\mbf{C}\times \bbR^+$.  The
intensity of the point process is $2\lambda\otimes \ell$, where $\ell$
is the Lebesgue measure on $\bbR^+$.  Both the marks are independent,
with the color uniformly distributed and the other mark is an
exponential random variable with parameter $\mu$.  Let
$\eta_0\in O(D,\mbf{C})$ be the initial state of the system at time
$0$.  Each point in $x\in\Phi$ is represented by four coordinates
$(p_x,b_x,c_x,w_x)$, with $p_x\in D$, $b_x,w_x\in \bbR^+$ and
$c_x\in\mbf{C}$.  $p_x$ denotes the spatial position of the point $x$,
$b_x$ denotes the time of its arrival, $c_x$ denotes its color and
$w_x$ denotes its patience.  The following display presents an
algorithm for the construction of the process on compact domains.
\\
\begin{mdframed}
  \begin{itemize}
  \item \textbf{Data:}
    \begin{enumerate}[a.]
    \item $\Phi$: A realization of the arrivals.
    \item $\eta_0$: A realization of the initial condition.
    \item $t\in\bbR^+$: End time of simulation.
    \end{enumerate}
  \item \textbf{Result:} $\eta_t$: The final state of the system as
    time $t$.  \end{itemize}
  \begin{enumerate}
  \item Set $t_{old}=0$.
  \item For each $x\in \eta_0$, assign i.i.d.  marks $w_x$, that are
    exponentially distributed with parameter $\mu$.
  \item Set
    $t_{new}= \min (\inf\{b_x:x\in \Phi_{(t_{old},\infty)}\}
    ,\inf\{w_x:x\in\eta_{t_{old}}\})$.  If $t_{new}> t$, quit and
    return $\eta_{t_{old}}$.
  \item If $t_{new}$ is due to arrival of a new particle (first
    infimum):
    \begin{itemize}
    \item Let the particle be $x$.
    \item If there is a particle of opposite color in $B(p_x,1)$:
      \begin{itemize}
      \item Match to the first particle of opposite color in
        $\eta_{t_{old}}\cap B(p_x,1)$ and remove that particle.  This
        gives $\eta_{t_{new}}$.
      \end{itemize}
    \item Else:
      \begin{itemize}
      \item Add the particle to the end of $\eta_{t_{old}}$ to give
        $\eta_{t_{new}}$
      \end{itemize}
    \end{itemize}
  \item Else if $t_{new}$ is due to a particle $x\in\eta_{t_{old}}$
    losing patience (second infimum), then remove this particle to
    yield $\eta_{t_{new}}$.
  \item Set $t_{old}=t_{new}$.  Go to Step 3.
  \end{enumerate}
\end{mdframed}

\subsection{Existence and Uniqueness of a Stationary Regime}
\label{sec:exis-uniq-stea-stat}
In this section, we look at the stationary regime of the process on a
compact domain $D$, defined in Section~\ref{sec:form-desc}.  We first
show that there is a unique stationary measure for this process, and
in the subsequent sections give a product form characterization.

It can be seen that if in the model, we have $\mu=0$, the process
$\{\eta_t\}$ does not have a stationary regime.  Indeed, in this case,
starting from $\eta_0=\emptyset$,
$|\eta_t|\geq |\Phi(D\times[0,t],\ttR)-\Phi(D\times[0,t],\ttB)|$.  The
process on the right-hand side does not have a stationary regime. So,
we need to assume $\mu>0$. It is easy to argue then, using a standard
coupling from the past or a Lyapunov technique, that there is a unique
stationary distribution.  For the sake of completeness, we give a
coupling from the past construction of a stationary regime and prove
that it is unique.

Suppose we have a bi-infinite time-ergodic Poisson point process
$\Phi$ on $D\times \bbR$, with marks in $\mbf{C}\times \bbR^+$, where
the first coordinate is the color of the particle and the second
coordinate is the time the particle is the patience, as in the
construction in Section~\ref{sec:form-desc}.  We define the notion of
a \emph{regeneration time} of the Poisson point process $\Phi$ as
follows.  A time $t\in\bbR$ is called a regeneration time if for all
$x\in \Phi$, with $b_x\leq t$, we have $t-b_x>w_x$.  That is, there is
no possibility that a particle arriving before $t$ survives beyond
time $t$.  For any process, $\{\eta_s^r\}_{s\geq r}$, started with
empty initial conditions at time $r$, and driven by the process
$\Phi$, we note that $\eta_s^r=\eta^t_s$, for all $s\geq t$ and all
regeneration times $t\geq r$.  Therefore, a stationary regime exists
if we can show the existence of a sequence of regeneration times that
diverge to $-\infty$ almost surely.  This is an instance of a coupling
from the past scheme.  The following lemma provides such a sequence of
regeneration times.

\begin{lemma}\label{lem:boundedRegeneration}
  Under the setting of this section, there are infinitely many regeneration
  times in the list $0,-1,-2,\ldots$, almost surely.
\end{lemma}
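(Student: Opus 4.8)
The plan is to exhibit a single integer-time event that occurs with positive probability and, by an independence/ergodicity argument, must occur for infinitely many negative integers almost surely, and to show that each such event forces a regeneration time. The natural event is: on the time window $(n-1, n]$ there is no arrival of $\Phi$ at all (so $\Phi(D \times (n-1,n]) = 0$), \emph{and} every particle that arrived strictly before time $n-1$ has already lost patience by time $n$. Actually a cleaner choice is to ask that there are no arrivals in a long window and that all patience clocks of earlier arrivals have expired; but since the patience marks $w_x$ are unbounded exponentials, we cannot make this deterministic on a fixed-length window. So instead I would proceed as follows.

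First, I would reduce the statement to the Borel--Cantelli / ergodicity level. Since $\Phi$ is bi-infinite and time-ergodic (indeed, it is a Poisson process on $D \times \bbR$ with i.i.d.\ marks, so the shift by $1$ acts ergodically), it suffices to show that the event ``$0$ is a regeneration time'' — or rather a positive-probability sub-event of it that is measurable with respect to the configuration of $\Phi$ restricted to a bounded time window — has positive probability; then the shifted copies of this event over the integers $0,-1,-2,\dots$ occur infinitely often almost surely. The subtlety is that being a regeneration time at $t$ a priori depends on \emph{all} points with $b_x \le t$, i.e.\ on an unbounded past, so the event is not obviously local. I would handle this by fixing a truncation parameter $m \in \bbN$ and considering the event $A_m$ that (i) $\Phi(D \times (-m, 0]) = 0$ (no arrivals in the last $m$ time units) and (ii) every point $x \in \Phi$ with $b_x \le -m$ has $w_x \le -m - b_x$, equivalently $b_x + w_x \le -m$, i.e.\ it has already departed by time $-m$. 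On $A_m$, no point with $b_x \le 0$ survives past time $0$: points with $b_x \le -m$ are gone by time $-m \le 0$, and there are no points with $-m < b_x \le 0$. Hence $A_m$ implies that $0$ is a regeneration time.

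Next I would estimate $\PP(A_m)$ and show it is positive. Condition (i) has probability $e^{-2\lambda(D) m} > 0$ since $\Phi$ restricted to $D \times (-m,0]$ is Poisson with mean $2\lambda(D) m < \infty$ (here $D$ compact is essential). For condition (ii): the points of $\Phi$ with $b_x \le -m$, together with their departure times $b_x + w_x$, form a Poisson process; the expected number of such points that are \emph{still alive} at time $-m$, i.e.\ with $b_x \le -m < b_x + w_x$, is $\int_{-\infty}^{-m} 2\lambda(D)\, \PP(w > -m - s)\, ds = 2\lambda(D) \int_0^\infty e^{-\mu u}\, du = 2\lambda(D)/\mu < \infty$. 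So the number of ``survivors from before $-m$'' is a Poisson random variable with finite mean $2\lambda(D)/\mu$, hence equals $0$ with probability $e^{-2\lambda(D)/\mu} > 0$. By independence of disjoint regions of the Poisson process (the arrivals in $(-m,0]$ are independent of the arrivals-and-marks before $-m$), $\PP(A_m) \ge e^{-2\lambda(D) m} \cdot e^{-2\lambda(D)/\mu} > 0$. Fix any such $m$, say $m=1$, giving a positive-probability event $A_1$ depending only on $\Phi$ restricted to times $\le 0$; more precisely, by the same computation the event ``$0$ is a regeneration time'' itself has probability $\ge e^{-2\lambda(D)/\mu} > 0$ (take $m \to 0$: the event is just that no point with $b_x \le 0$ has $b_x + w_x > 0$, which has probability $e^{-2\lambda(D)/\mu}$).

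Finally, I would run the Borel--Cantelli argument. Let $\theta$ denote the time-shift by $1$ on the space of realizations of $\Phi$, which is measure-preserving and ergodic. Let $R = \{\Phi : 0 \text{ is a regeneration time for } \Phi\}$, so $\PP(R) > 0$, and note $\theta^{-n} R = \{\Phi : -n \text{ is a regeneration time}\}$. By the ergodic theorem, $\frac1N \sum_{n=0}^{N-1} \1_{\theta^{-n}R} \to \PP(R) > 0$ almost surely, so $\1_{\theta^{-n}R} = 1$ for infinitely many $n \ge 0$ almost surely; equivalently, infinitely many of $0, -1, -2, \dots$ are regeneration times. (If one prefers not to invoke ergodicity, the events $\{A_1 \text{ holds at time } -2k\}_{k \ge 0}$ are independent since they depend on $\Phi$ restricted to disjoint time windows $(-2k-1, -2k]$ together with the pre-$(-2k-1)$ survivor count — one must be slightly careful that the ``survivor'' conditions reference overlapping pasts, so it is cleanest to instead use the genuinely independent events $B_k = \{\Phi(D \times (-k-1,-k]) = 0\} \cap \{\text{no point with } b_x \in (-k-2,-k-1] \text{ has } w_x > 1\} \cap \dots\}$; this gets fiddly, and ergodicity is the clean route.)

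The main obstacle, and the only real content, is the non-locality issue in the second paragraph: a regeneration time constrains the entire past, so one must verify that the relevant ``survivors from the distant past'' are almost surely finite in number with positive probability of being zero — which is exactly where $\mu > 0$ and $\lambda(D) < \infty$ enter, via the finite mean $2\lambda(D)/\mu$ of that Poisson count. Everything else is routine Poisson computation plus an application of the ergodic theorem (or Borel--Cantelli).
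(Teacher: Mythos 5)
Your proposal is correct and follows essentially the same route as the paper: establish that the event ``$0$ is a regeneration time'' has positive probability (the paper bounds it below by $e^{-2\lambda(D)/\mu}$ via the Laplace functional and Fatou, while you compute the same value directly by noting that the survivors at time $0$ form a Poisson count of mean $2\lambda(D)/\mu$), and then invoke time-ergodicity of $\Phi$ under the unit shift to conclude that the shifted events occur for infinitely many negative integers almost surely. The truncation detour with the parameter $m$ is unnecessary, as you yourself observe, but it does no harm.
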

\begin{proof}
  Let us find the probability of the event, $A_0$, that $0$ is a
  regeneration time.  We have:
  \begingroup
  \allowdisplaybreaks
  \begin{align*}
    \PP(A_0)&=\PP(w_x<-b_x,\forall x\in \Phi, b_x<0)\\
            &=\EE\prod_{x\in \Phi}\1(w_x<-b_x,b_x<0)\\
            &=\EE\lim_{s\ra\infty}e^{-s\int\1(w_{x}\geq -b_x)\Phi(dx)}\\
            &\geq
              \limsup_{s\ra\infty}\exp\left(\int_{D\times\mbf{C}}\int_{\bbR^-\times\bbR^+}\left(e^{-s\1(w\geq-b)}-1
              \right)\mu e^{-\mu w}dwdb\lambda(dp)\right)\\
            &=\limsup_{s\ra\infty} \exp\left(2\lambda(D)\int_{\bbR^+}(e^{-s}-1)e^{-\mu b}db\right)\\
            &=\limsup_{s\ra\infty}\exp\left(2\lambda(D)(e^{-s}-1)/\mu)\right)\\
            &=\exp\left(-2\lambda(D)/\mu\right) >0,
  \end{align*}
  \endgroup
  where in the third equation we have used the Fatou's lemma and the
  Laplace transform formula for Poisson point processes.  Now, let
  $A_n$ be event that $-n$ is a regeneration time.  If $\theta_{t}$ is
  a time-shift operator, we have $A_n=\theta_{-n}A_0$.  By
  time-ergodicity of $\Phi$, $A_n$ must occur infinitely often, almost
  surely.  Thus, there are infinitely many regeneration times in the
  list $\{0,-1,-2,\ldots\}$.
\end{proof}

The uniqueness of a stationary regime can also be show using a
coupling argument.  We only give an outline of this procedure here.
Suppose we consider two stationary measures of the process.  Let
$\eta^1_0$ and $\eta^2_0$ be realizations of these two states.  For
large enough $n$, the probability that $|\eta^1_0|>n$ and
$|\eta^2_0|>n$ is less than $\epsilon>0$.  Conditioned on this event
we may couple the processes in a time $T$, using the coupling scheme
from the previous lemma. Note that with such a scheme, we have
$\EE T<\infty$.  Thus, the total variation distance
$d_{TV}(\eta^1_t,\eta^2_t)\leq \epsilon + \EE T/t$, using the coupling
and the Markov inequalities.  Since
$d_{TV}(\eta^1_0,\eta^2_0)=d_{TV}(\eta^1_t,\eta^2_t)$, we must have
that $\eta^1_0$ must be equal in distribution to $\eta^2_0$.

In the next section, we present a product form characterization of
this steady state distribution.  To do this, the key step is to
construct the reversed process.

\subsection{Product Form Characterization of the Steady State}
\label{sec:prod-form-char}
Let $\Phi$ be the driving Poisson point process on $D\times\bbR$, with
i.i.d marks in $\mbf{C}\times\bbR^+$, that is given as data as defined
in the coupling from the past construction in
Section~\ref{sec:exis-uniq-stea-stat}.

\hypertarget{def:matching-func}{} Lemma~\ref{lem:boundedRegeneration}
implies that there exists a unique bi-infinite spatial matching
process that is driven by $\Phi$.  We can thus define a (random)
matching function, $m:\Phi\ra D\times\bbR\times\mbf{C}$, such that
\begin{align*}
  m(x)=
  \begin{cases}
    (p_x,b_x+w_x,c_x) &\textrm{if $x$ exits on its own,}\\
    (p_y,b_y,c_y) &\textrm{if $x$ matches to $y\in \Phi$.}
  \end{cases}
\end{align*}
Conversely, $m$ stores all the information necessary to build the
process $\{\eta_t\}_{t\in\bbR}$.  Indeed, the state of the
spatial matching process we are interested in is given by
\begin{align*}
  \eta_t=((p_x,c_x):x\in \Phi, b_x\leq t <b_{m(x)}),
\end{align*}
where the list is ordered according to the birth-times, $b_x$.

\hypertarget{def:matching-marks}{} To get a handle on the stationary
distribution of this process, we shall create its reversed process.
Taking inspiration from \cite{Adan2018}, we will include some
additional data in the state of the system that will simplify the
description of the reversed process.  We shall consider a process that
we call the \emph{backward detailed process} generated by $\Phi$ and
$m$.  This process contains unmatched and matched particles in its
state, and we distinguish these types by using marks ``$\ttu$'' or
``$\ttm$'' respectively.  For any particle $x$ in the state, $s_x$
will refer to this mark.

\hypertarget{def:hat-eta_t}{} For $t\in\bbR$, let
$$T_t := \min \{b_x:x\in \Phi, b_x\leq t < b_{m(x)}\}$$ be the time of
arrival of the earliest among the unmatched particles at time $t$.
Let
$$\Gamma_{\ttu}:=\{(p_x,b_x,c_x,\ttu):x\in \Phi, b_x\leq t < b_{m(x)}\}$$
be the set of (location, arrival-times and colors of) unmatched particles
in $[T_t,t]$.  Let
\begin{align*}
  \Gamma_\ttm&:=\{(p_{x},b_{m(x)},c_{x},\ttm):x\in \Phi,b_{x} \leq t, T_t\leq
               b_{m(x)}\leq t\}\\
             &=\{(p_{m(x)},b_{x},c_{m(x)},\ttm):x\in \Phi, b_{m(x)}\leq t,
               T_t\leq b_x\leq t, c_x\neq c_{m(x)}\}\\
  &\IndII\cup\{(p_{x},b_{m(x)},c_{x},\ttm):x\in \Phi,b_{x} \leq t, T_t\leq
               b_{m(x)}\leq t, c_x=c_{m(x)}\},
\end{align*}
be the set of so-called \emph{matched and exchanged} particles that
are present in $[T_t,t]$. In the last expression, the first set of
elements corresponds to particles that arrive in the relevant
interval, $[T_t,t]$, and are matched by the time $t$; but instead of
recording their positions and types, we record that of their
matches. The second set of elements in that expression corresponds to
particles arrive before $t$, that depart on their own in the time
interval $[T_t,t]$; we record the time at which they depart.

Finally, define the backward detailed process, $\hat \eta_t$, be the
list $((p_x,c_x,s_x):x\in \Gamma_\ttu\cup \Gamma_\ttm)$, ordered
according to the values of $b_{(\cdot)}$. Clearly, the original
process $\eta_t$ can be obtained from $\hat\eta_t$ by removing the
particles with marks $s_x=\ttm$.  Notice that if $|\hat\eta_t|>0$, the
first element in $\hat\eta_t$, denoted by $x_1$, always satisfies
$s_{x_1}=\ttu$.

The backward detailed process, $\hat\eta_t$, is a stationary version
of a Markov process. A valid state of this Markov process is any
finite list of elements, $(x_1,\ldots,x_n)$ from the set
$D\times\mbf{C}\times \{\ttu,\ttm\}$ that satisfies the following
definition.
\begin{definition}[Definition of a valid state of
  $\hat\eta_t$] \label{cond:vali-stat}
  We say that a finite list of elements $(x_1,\ldots,x_n)$, with
  $n\in\bbN$ and $x_i\in D\times\mbf{C}\times\{\ttm,\ttu\}$, is a
  \emph{valid} state of $\hat\eta_t$ if the following three
  conditions are satisfied:
  \begin{enumerate}
  \item $s_{x_1}=\ttu$, if $n\geq 1$.
  \item For all $1\leq i,j\leq n$, $s_{x_i}=s_{x_j}=\ttu$ and
    $d(p_{x_i},p_{x_j})\leq 1$ implies that $c_{x_i}= c_{x_j}$.
\item For all $1\leq i<j\leq n$, $s_{x_i}=\ttu$, $s_{x_j}=\ttm$ and
  $d(p_{x_i},p_{x_j})\leq 1$ implies that $c_{x_i}= c_{x_j}$.
\end{enumerate}
\end{definition}
Condition 2 in the above definition essentially states that there
cannot be a compatible unmatched pair in a valid state.  This
condition is equivalent to the condition that
$$\{y\in x_1,\ldots x_n:s_y=\ttu\}\cap N(\{y\in x_1,\ldots
x_n:s_y=\ttu\})=\emptyset.$$ Condition 3 cannot be violated, since
otherwise the particle whose matched and exchanged pair is $x_j$ could
instead have matched to $x_i$ that arrives earlier. This condition is
equivalent to the condition that for all $1\leq j\leq n$,
$$s_{x_j}=\ttm\implies x_j\notin N(\{y\in x_1,\ldots,
x_j:s_y=\ttu\}.$$ Any valid state can be achieved by the process
$\hat\eta_t$ in finite time with positive probability. Indeed,
starting from the empty state, a valid state, $\hat\eta$, can result
from empty state if the arrivals occur in the order listed in
$\hat\eta$, with appropriate patience so that the particles in
$\hat\eta$ marked $\ttu$ survive until time $t$, and the particles in
$\hat\eta$ marked $\ttm$ exit on their own before the next arrival.

Transitions for $\hat\eta_t$ occur at the time of arrival of a new
particle or at the event of a voluntary departure. At the time of a
new arrival, we match and exchange the particles in the list
$\hat\eta_t$, and at the time of a departure, we put the departing
particle at the end of the list $\hat\eta_t$, while updating the mark
to $\ttm$. Below, we describe the transitions and transition rates of
this Markov process in detail.

The transitions and transition rates for $\hat\eta_t$ are as follows:
Let $\hat\eta=(x_1,\ldots,x_n)$, $n\in \bbN$, be a valid state.
\begin{enumerate}
\item A particle $x_i\in\hat\eta$, with $s_{x_i}=\ttu$, loses
  patience:  This occurs at rate $\mu$. In this case, the new state is
  obtained by removing the $x_i$ and inserting
  $(p_{x_i},c_{x_i},\ttm)$ at the end of the list $\hat\eta$.
  Additionally, we need to prune leading matched and exchanged
  particles from $\hat\eta$ to obtain the new state.
\item A new particle $y=(p_y,c_y)$ arrives and is matched to a
  particle $x_i\in \hat\eta$, with $d(p_{x_i},p_y)\leq 1$ and
  $c_{x_i}\neq c_y$: This occurs at rate
  $\blambda(dy) \1(y\in W_{x_i})$.  The new state is obtained by
  matching and exchanging the appropriate pair, and then pruning the
  leading matched and exchanged particles.
\item A new particle $y$ arrives and there is no particle of opposite
  color within a distance $1$ from it:  This occurs at rate
  $\blambda(dy) \1(y\notin N(\hat\eta))$.  The new state is the one
  obtained by adding this new particle to the end of the list as an
  unmatched particle.
\end{enumerate}

\hypertarget{def:check-eta_t} We now guess the time-reversed version
of the backward-detailed process, and obtain its transition rates.
The following construction will be useful in doing this.  Consider a
dual process $\check\eta_t$, that we call the \emph{forward detailed
  process}.  It is defined as follows: for $t\in\bbR$ let
$$Y_t:=\max\{b_{m(x)}:x\in \Phi, b_x\leq t< b_{m(x)}\},$$
be the latest time at which all unmatched particles at time $t$ are
matched or exit. Let
\begin{align*}
  \Xi_\ttm&:=\{(p_x,b_{m(x)},c_x,\ttm):x\in \Phi, b_x\leq t<b_{m(x)}\}\\
  &=\{(p_{m(x)},b_x,c_{m(x)},\ttm):x\in \Phi, b_{m(x)}\leq t <b_{x},
  c_x\neq c_{m(x)}\}\\
  &\IndII\cup\{(p_x,b_{m(x)},c_x,\ttm):x\in\Phi, b_x\leq t<b_{m(x)},c_x=c_{m(x)}\},
\end{align*}
 be the
matched and exchanged particles corresponding to the particles that
are born before time $t$, but have not been removed from the system by
time $t$.  Let
\begin{align*}
\Xi_{\ttu}:=\{(p_x,c_x,b_{x},\ttu):x\in \Phi, t<b_x<Y_t,t<b_{m(x)}\},
\end{align*}
be the particles in the relavant interval $(t,Y_y)$, whose match
arrives after time $t$.  Now, let
$\check\eta_t= ((p_x,c_x,s_x):x\in \Xi_u\cup\Xi_{\ttm})$,
ordered according to the values $b_{(\cdot)}$.  Thus, the last element
in the list $x_{|\check\eta|}$, always has
$s_{x_{|\check\eta|}}=\ttm$. For motivations for these definitions, see \cite{Adan2018}.

Under our construction, using the bi-infinite Poisson point process
$\Phi$, the process $\{\check\eta_t\}_{t\in\bbR}$ is a stationary
process.  In fact, it is a stationary version of a Markov process,
since all the arrivals and deaths are Markovian.  The transitions and
transition rates are defined in detail in
Appendix~\ref{sec:calculations}.

The underlying idea in obtaining the product form distribution is the
following.  For any list of elements $\gamma$, let
$\rx(\gamma)$ be the list of elements in $\gamma$ written in
the reverse order, with the marks $\ttu$ and $\ttm$ flipped.  Then we
claim that, a version of time-reversal of the backward-detailed
process $\{\hat\eta_t\}_{t\in\bbR}$, is given by
$\{\rx(\check\eta_t)\}_{t\in\bbR}$. That is,
\begin{align*}
  \{\hat\eta_{-t}\}_{t\in\bbR}\stackrel{d}{=}\{\rx(\check\eta_t)\}_{t\in\bbR}.
\end{align*}
Indeed, the two processes are exactly equal if $\check\eta_t$ is
constructed using the time-reversal of $\Phi$.  We refrain from
showing this observation in detail, and instead check the local
balance conditions to obtain the product form result.  See
Appendix~\ref{sec:dynReversibility} for definition of local balance
conditions.

\hypertarget{def:Q-i-m}{}\hypertarget{def:Q-i-u}{}

Before we state the main result of this section, we need to fix some
notation.  For any list $\gamma\in O(D,\mbf{C}\times\{\ttu,\ttm\})$
and $i\in\bbN$, we define the $Q^i_\ttu(\gamma)$ to be the number of
unmatched particles \emph{among} the first $i$ particles on $\gamma$, and
define $Q^i_\ttm(\gamma)$ to be the number of matched particles
\emph{excluding} the first $i$ particles of $\gamma$.  In this notation,
we may drop the reference to $\gamma$ when the context is clear.
Also, for the sake of brevity, we will write, for any $n\in\bbN$,
$\rho(n)=2\lambda(D)+n\mu$.  We have the following result.

\begin{theorem}\hypertarget{def:hat-pi}{}
  \label{thm:invMeasure}
  The density of the stationary measure of the backward detailed
  process, $\{\hat\eta_t\}_{t\in\bbR}$, w.r.t.  the measure
  $\oplus_{n=0}^\infty(\lambda\otimes m_c\otimes m_c)^n$ on
  $O(D,\mbf{C}\times\{\ttu,\ttm\})\subset \sqcup_{n=0}^\infty (D\times \mbf{C}\times\{\ttu,\ttm\})^n$, is
  given by
  \begin{align}
    \label{eq:invMeasure}
    \hat\pi(\gamma)&=K\1(\gamma\textrm{ is
                     valid})\prod_{i=0}^{|\gamma|}\frac{1}{\rho(Q^i_\ttu(\gamma))}=:K\1(\gamma\textrm{
                     is valid})\hat\Pi(\gamma),\\
    \hat\pi(\emptyset)&=K,
  \end{align}
  where
  $\hat\Pi(\gamma) =\prod_{i=0}^{|\gamma|}\frac{1}{\rho(Q^i_\ttu(\gamma))}$,
  and where $K$ is the normalizing constant,
  \begin{align*}
    K^{-1}=\sum_{n=0}^\infty\int_{(D\times \mbf{C}\times
    \{\ttu,\ttm\})^n}\1(\gamma \textrm{ is valid})\prod_{i=0}^n\frac{1}{\rho(Q^i_\ttu(\gamma))}(\lambda\otimes
    m_c\otimes m_c)^{(n)}(d\gamma).
  \end{align*}
\end{theorem}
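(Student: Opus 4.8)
The strategy is to verify that the proposed density $\hat\pi$, together with the transition rates of the forward detailed process $\{\check\eta_t\}$ (as specified in the appendix), satisfies the local balance equations that certify $\{\mathrm{revx}(\check\eta_t)\}$ is a version of the time-reversal of $\{\hat\eta_t\}$. Concretely, for a pair of transitions related by time-reversal — a transition $\gamma \to \gamma'$ of $\hat\eta$ at rate $q(\gamma,\gamma')$ versus the mirror transition $\mathrm{revx}(\gamma') \to \mathrm{revx}(\gamma)$ of $\check\eta$ at rate $\check q$ — one must check the detailed-balance-type identity $\hat\pi(\gamma)\,q(\gamma,\gamma') = \hat\pi(\gamma')\,\check q(\mathrm{revx}(\gamma'),\mathrm{revx}(\gamma))$ (integrated against the base measure where a transition involves a continuum of target states). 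Since the excerpt defers the transition rates of $\check\eta$ to Appendix~\ref{sec:calculations}, I would first pin those down, then go transition type by transition type.

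There are three kinds of moves for $\hat\eta$, and I would handle each: (i) an unmatched particle $x_i$ loses patience, is converted to a matched mark, moved to the end, and leading matched particles are pruned; (ii) an arriving particle matches an existing unmatched $x_i$, is exchanged, and leading matched particles are pruned; (iii) an arriving particle finds no compatible partner and is appended as unmatched. For each, I would compute how $\hat\Pi(\gamma)=\prod_{i=0}^{|\gamma|}\rho(Q^i_\ttu(\gamma))^{-1}$ changes. The key bookkeeping fact is that $Q^i_\ttu$ counts unmatched particles among the first $i$ entries: appending an unmatched particle at the end multiplies $\hat\Pi$ by $1/\rho(Q^{|\gamma|}_\ttu+1)$ where $Q^{|\gamma|}_\ttu = |\gamma^\ttu|$ is the total unmatched count; converting an unmatched to matched and moving it to the end shifts the partial counts in a way that telescopes; pruning a leading matched particle removes a factor $1/\rho(\cdot)$ from the front of the product without changing later $Q^i_\ttu$ values (since a leading matched particle contributes $0$ to every $Q^i_\ttu$). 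I would show that in each case the ratio $\hat\pi(\gamma')/\hat\pi(\gamma)$ exactly matches the ratio of forward-to-backward rates, using that $\rho(n) = 2\lambda(D) + n\mu$ is precisely the total exit rate (arrivals at rate $2\lambda(D)$ plus $n$ impatience clocks at rate $\mu$) from a state with $n$ unmatched particles — this is where the specific form of $\rho$ enters. The validity indicator must also be checked to be consistent: one verifies that each $\hat\eta$-transition preserves validity (so $\hat\pi(\gamma')>0$ whenever the transition has positive rate), which is essentially Definition~\ref{cond:vali-stat} combined with the matching rule ($W_{x_i}$ regions are disjoint from $N$ of earlier unmatched particles).

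Once local balance holds for every transition pair, the general theory of dynamic reversibility (Appendix~\ref{sec:dynReversibility}) gives that $\hat\pi$ is stationary for $\hat\eta_t$; combined with the already-established uniqueness of the stationary regime (Section~\ref{sec:exis-uniq-stea-stat}, via Lemma~\ref{lem:boundedRegeneration}), this identifies $\hat\pi$ as *the* stationary density. The normalizing constant $K$ is then forced by $\int \hat\pi \, d(\oplus_n (\lambda\otimes m_c\otimes m_c)^n) = 1$, giving the stated formula for $K^{-1}$; one should note in passing that $K^{-1}<\infty$ because validity forces the unmatched particles of any $\gamma$ to be an $N$-independent set, whose count is stochastically dominated (so the sum over $n$ converges geometrically once $n\mu$ dominates).

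**Main obstacle.** The delicate point is the pruning step: transitions (i) and (ii) both append one particle *and* delete a variable-length block of leading matched particles, so the time-reversed move of $\check\eta$ is correspondingly a deletion-from-the-end paired with an insertion-at-the-front, and one must be careful that the map $\gamma \mapsto \gamma'$ is not injective in the naive sense — several states $\gamma$ can prune down to the same $\gamma'$, and the local balance identity must be summed over the correct fiber. Getting the combinatorics of this fiber right, and checking that the $\hat\Pi$-factors telescope across the pruned block so that the leading factors cancel against the forward rates of $\check\eta$, is the crux; everything else is the routine substitution of $\rho(n)=2\lambda(D)+n\mu$ into the balance equations.
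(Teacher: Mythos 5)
Your plan is correct and follows essentially the same route as the paper: guess the forward detailed process as the time-reversal under the order-reversing, mark-flipping map $\rx$, verify the local balance / dynamic reversibility conditions transition type by transition type (with the pruning step handled by summing over the block of leading matched particles, which in the paper appears as the law of the auxiliary variable $\tau$ and the i.i.d.\ sampled particles in the forward process's rates), and then invoke uniqueness of the stationary regime and normalization. The only presentational difference is that the paper carries out this verification in weak form, pairing the two generators against compactly supported test functions via Theorem~\ref{thm:dynReversedCts}, which is precisely the ``integrated against the base measure'' caveat you flag.
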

The proof of the above theorem is given in
Appendix~\ref{sec:calculations}.

For the stationary distribution $\pi$ of the original process
$\eta_t$, we compute the marginals of $\hat\pi$.  Firstly, a state
$\gamma=(x_1,\ldots,x_n)\in O(D,\mbf{C})$ is a valid state of the
process, $\{\eta_t\}_{t\in\bbR}$, if and only if
$\{x_1,\ldots, x_n\}\cap N(\gamma)=\emptyset$.

\begin{corollary}\hypertarget{def:pi}{}
  The density of the stationary distribution $\pi$ of the process
  $\eta_t$, w.r.t.  the $\oplus_{n=0}^\infty\blambda^n$ on
  $\sqcup_{n=0}^\infty (D\times\mbf{C})^n$, is given by
  \begin{align}
    \label{eq:non-detailed}
    \pi(\gamma)&=K\1(\gamma\textrm{ is valid})\prod_{i=1}^{|\gamma|}\frac{1}{\blambda(N(\gamma_1^{i}))+i\mu},\\
    \pi(\emptyset)&=K,
  \end{align}
  where $K$ is the same normalizing constant as in
  Theorem~\ref{thm:invMeasure}
\end{corollary}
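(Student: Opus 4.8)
The plan is to obtain $\pi$ by marginalizing the product-form law $\hat\pi$ of Theorem~\ref{thm:invMeasure} over the matched-and-exchanged particles. Let $\Psi$ denote the map on states that deletes the particles carrying the mark $\ttm$; as noted just before the statement, $\eta_t=\Psi(\hat\eta_t)$, so $\pi=\Psi_*\hat\pi$, and it suffices to compute the density of this pushforward at an arbitrary valid state $\gamma=(x_1,\dots,x_n)$ of $\{\eta_t\}$ --- that is, one with $\{x_1,\dots,x_n\}\cap N(\gamma)=\emptyset$; for $\gamma$ not valid both sides vanish, since any $\hat\gamma\in\Psi^{-1}(\gamma)$ would violate Condition~(2) of Definition~\ref{cond:vali-stat}.

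First I would enumerate the fibre $\Psi^{-1}(\gamma)$. By Condition~(1) of Definition~\ref{cond:vali-stat} every nonempty valid state of $\hat\eta_t$ begins with a $\ttu$-particle, so in any $\hat\gamma\in\Psi^{-1}(\gamma)$ the $\ttm$-particles are distributed among the $n$ ``gaps'' --- gap $i$ consisting of the positions after $x_i$ and before $x_{i+1}$ ($i=1,\dots,n$, the last gap being everything after $x_n$). Hence a preimage is determined by a gap-occupancy vector $(m_1,\dots,m_n)\in\bbN^n$ together with, for each $i$, an ordered list of $m_i$ points of $D\times\mbf{C}$: Condition~(2) is exactly the validity of $\gamma$, while Condition~(3) forces each $\ttm$-particle lying in gap $i$ to avoid $N(\gamma_1^i)$, i.e.\ to lie in $R_i:=(D\times\mbf{C})\setminus N(\gamma_1^i)$, a set of mass $\blambda(R_i)=2\lambda(D)-\blambda(N(\gamma_1^i))$. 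Next, since $Q^k_\ttu(\hat\gamma)$ is constant as $k$ ranges over a gap and increases by one at each $x_i$, the value $i$ is attained by $Q^k_\ttu$ exactly $1+m_i$ times, so on such a $\hat\gamma$ the product defining $\hat\Pi$ equals $\prod_{i=1}^n\rho(i)^{-(1+m_i)}$, up to the state-independent factor $\rho(0)^{-1}$ that is immaterial after normalization. Substituting into the pushforward formula for densities and integrating the $\ttm$-particles over the $R_i$ makes the sum over preimages factorize over gaps:
\[ \pi(\gamma)=K\,\1(\gamma\textrm{ valid})\prod_{i=1}^{n}\left[\frac{1}{\rho(i)}\sum_{m\ge0}\left(\frac{\blambda(R_i)}{\rho(i)}\right)^{m}\right]. \]
Because $\blambda(R_i)\le\blambda(D\times\mbf{C})=2\lambda(D)<2\lambda(D)+i\mu=\rho(i)$ for every $i\ge1$, each geometric series converges to $\rho(i)/(\rho(i)-\blambda(R_i))$, and $\rho(i)-\blambda(R_i)=i\mu+\blambda(N(\gamma_1^i))$ yields the stated formula; the case $\gamma=\emptyset$ is immediate since $\Psi^{-1}(\emptyset)=\{\emptyset\}$, so $\pi(\emptyset)=\hat\pi(\emptyset)=K$ with the same normalizing constant.

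The main obstacle I anticipate is the measure-theoretic bookkeeping of the pushforward rather than the algebra. One has to check that the reference measure $\oplus_n(\lambda\otimes m_c\otimes m_c)^n$, restricted to $\Psi^{-1}(\gamma)$, decomposes as the sum over gap-occupancy vectors of the product over the ordered slots within each gap of copies of $\blambda$ --- so that no combinatorial factors $1/m_i!$ arise --- and to observe that $\hat\Pi$ together with the validity constraints is invariant under permuting the $\ttm$-particles inside a single gap, which is precisely what legitimizes that decomposition. Granting this, the remaining computation is the routine one carried out above.
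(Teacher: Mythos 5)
Your proposal is correct and follows essentially the same route as the paper's proof: both marginalize the product-form law of Theorem~\ref{thm:invMeasure} over the matched-and-exchanged particles gap by gap, using that a matched particle sitting between $x_i$ and $x_{i+1}$ must avoid $N(\gamma_1^i)$, and then sum the resulting geometric series with ratio $\blambda((D\times\mbf{C})\setminus N(\gamma_1^i))/\rho(i)$. Your write-up is merely a bit more explicit than the paper's about enumerating the fibre of the deletion map and about the state-independent factor $\rho(0)^{-1}$ coming from the $i=0$ term of $\hat\Pi$, which the paper silently absorbs into the normalization.
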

\begin{proof}
  We calculate the marginal distribution of the unmatched particles
  from the distribution in \ref{thm:invMeasure}.  Given that
  $\eta=(x_1,\ldots,x_n)$ is in the steady state, let $l_i$,
  $1\leq i\leq n$, denote the number of matched particles present
  between $x_i$ and $x_{i+1}$ in the detailed version of the process.
  The only restriction that these particles must satisfy is that they
  must be incompatible with $x_1,\ldots x_i$.  Integrating over
  the positions of each of the $l_i$ particles gives a factor
  $\blambda(D\times\mbf{C}\backslash N(x_1,\ldots, x_i))$. Thus, we
  have:
  \begin{align*}
    \pi(x_1,\ldots,x_n)
    &=K\1(x_1^n\textrm{ is valid})\prod_{i=1}^n\sum_{l_i\in\bbN}\frac{(\blambda(D\times\mbf{C}\backslash
      N(x_1,\ldots, x_i)))^{l_i}}{(2\lambda(D)+i\mu)^{l_i+1}}\\
    &=K\1(x_1^n\textrm{ is valid})\prod_{i=1}^n\frac{1}{2\lambda(D)+i\mu-\blambda(D\times\mbf{C}\backslash
      N(x_1,\ldots,x_i))}\\
    &=K\1(x_1^n\textrm{ is valid})\prod_{i=1}^n\frac{1}{\blambda(N(\eta_1^i))+i\mu}.
  \end{align*}
\end{proof}

\subsection{Clustering Properties and the FKG Property}
\label{sec:clus-prop-fkg}
\hypertarget{def:tilde-pi}{}\hypertarget{def:P(C)}{}

In this section, we focus on the stochastic geometric properties of
the steady state arrangement of the particles in space $D$.  Hence, we forget the
reference to the order of the particles in the steady state.  The
Janossy density (\cite{daley2007introduction}) of a point process
intuitively is the relative probability of observing a given
configuration of points with respect to a given reference measure.
The Janossy density of the steady state distribution of our point
process model, with respect to the Poisson point process on
$D\times\mbf{C}$ with intensity $\blambda$ is given by dropping the
order of particles in Equation~\ref{eq:non-detailed}.  That is, the
Janossy density is
\begin{equation}
  \begin{aligned} \label{eq:janossy}
    \tpi(x_1^n)&=K\1(x_1^n\textrm{ is valid})\tilde{\Pi}(x_1^n),\\
    \tilde\Pi(x_1^n)&=\sum_{(X_1^n)\in \mathcal{P}(x_1^n)}\prod_{i=1}^n\frac{1}{\blambda(N(X_1^i))+i\mu},
  \end{aligned}
\end{equation}
where $\mathcal{P}(x_1^n)$ is the set of all permutations of
$x_1,\ldots,x_n$, and $K$ is a normalizing constant.

Let us take a moment to interpret the term $\blambda(N(x_1^i))$ that
appears in the above expression.  This is the sum of the volumes of
the union of balls around red particles in $x_1,\ldots, x_i$ and the
union of balls around the blue particles in $x_1,\ldots,x_i$.  Since
such terms appear in the denominator in eq.~\ref{eq:janossy}, we
expect that in the steady state the particles of the same color are
clumped together.

In a variety of point processes, such as the one-particle
Widom-Rowlinson model, or certain Cox processes
\cite{blaszczyszyn2014comparison}, the FKG lattice property is a
useful tool for proving stochastic dominance and clustering
properties.  In the case of Widom-Rowlinson model, the FKG inequality
is also useful in showing the existence of a phase-transition
\cite{chayes1995analysis}.  The FKG lattice property defined on a
measure $\psi$ over a finite distributive lattice $\Omega$ states that
for every $\xi,\gamma\in \Omega$,
\begin{align}
  \label{eq:gen-FKG-property}
  \psi(\xi\vee \gamma)\psi(\xi\wedge\gamma)\geq\psi(\xi)\psi(\gamma).
\end{align}
If $\psi$ satisfies eq.~\ref{eq:gen-FKG-property}, it is said to be
log-submodular.  The FKG lattice property implies the positive
association inequality:
\begin{align}
  \psi(fg)\geq \psi(f)\psi(g),\label{eq:positive-assoc}
\end{align}
for all increasing functions $f$ and $g$ on $\Omega$, where $\psi(f)$
represents the expectation of $f$ with respect to $\psi$.

This theorem can also be extended to point processes in the continuum
as follows (see \cite{georgii1997stochastic,chayes1995analysis} for
details).  Let $P$ is point process on a measurable space $S$, with
Janossy density $\psi$ with respect to a Poisson point process with
intensity $\lambda$, the FKG lattice property states that:
\begin{align}\label{eq:cont-FKG-property}
  \psi(\xi\cup \gamma)\psi(\xi\cap\gamma)\geq\psi(\xi)\psi(\gamma),
  \textrm{ forall }\xi,\gamma\in M(S).
\end{align}
Under this hypothesis, one can conclude positive association
inequalities such as eq.~\ref{eq:positive-assoc}, where $f$ and $g$
are now increasing functions on $M(S)$.

\begin{remark}
  The FKG lattice property in the continuum point process case can
  also be stated in terms of the Papangelou conditional intensities:
  If $\varphi(x,\xi)$ is the Papangelou conditional intensity of a
  point process with Janossy density $\psi$, then
  eq.~\ref{eq:cont-FKG-property} is equivalent to
  $\varphi(x,\xi)\geq \varphi(x,\xi')$ for all $x\in S$ and
  $\xi,\xi'\in M(S)$ with $\xi\supseteq\xi'$ (see
  \cite{georgii1997stochastic} for details).
\end{remark}

In the following, we prove an FKG lattice property in the steady state
version of our model, under a specific lattice structure defined on
$M(D,\mbf{C})$.  Let $\xi=(\xi^\ttR,\xi^\ttB)$ and
$\gamma=(\gamma^\ttR,\gamma^\ttB)$ be two configurations in
$M(D,\mbf{C})$, where $\xi^\ttR$ and $\gamma^\ttR$ are the red
particles, and $\xi^\ttB$ and $\gamma^\ttB$ are the blue particles in
these configurations.  We say that $\xi>\gamma$ if and only if
$\xi^\ttR\supset \gamma^\ttR$ and $\xi^\ttB\subset \gamma^\ttB$.  We
note that the FKG lattice property is satisfied in the binary particle
Widom-Rowlinson model with the same lattice structure. In
\cite{chayes1995analysis}, the authors also use discretization based
arguments to lift the positive associations result for this lattice
structure in the continuum.

To prove the FKG lattice property in our setting, we need the
following auxiliary lemma.

\hypertarget{def:paths-space}{}
\begin{lemma}\label{lem:aux}
  Let $(\alpha_i)_{i=1}^n$ and $(\beta_j)_{j=1}^m$ be two sets of
  positive numbers.  Let $P(n,m)$ be the set of all increasing paths
  in the grid $[n]\times [m]$, so that for any $\sigma\in P(n,m)$, we
  have $\sigma(0)=(0,0)$, $\sigma(m+n)=(n,m)$, and
  $\sigma(i+1)-\sigma(i)$ is either $(1,0)$ or $(0,1)$, for all
  $0\leq i<m+n$.  Then, we have
  \begin{align*}
    \sum_{\sigma\in P(n,m)}\prod_{i=1}^{n+m}\frac{1}{\alpha_{\sigma_x(i)}+\beta_{\sigma_y(i)}}=\prod_{i=1}^n\frac{1}{\alpha_{i}}\prod_{i=1}^m\frac{1}{\beta_i},
  \end{align*}
  Here, $\sigma_x$ and $\sigma_y$ denote the $x$ and $y$ coordinate
  respectively.
\end{lemma}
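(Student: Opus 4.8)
The plan is to prove the identity by induction on $n+m$, peeling off the final step of each path. Every increasing path $\sigma \in P(n,m)$ reaches $(n,m)$ either from $(n-1,m)$ (a horizontal last step) or from $(n,m-1)$ (a vertical last step); in both cases the last factor in the product is $\frac{1}{\alpha_n+\beta_m}$, since $\sigma_x(n+m)=n$ and $\sigma_y(n+m)=m$. Splitting the sum accordingly gives
\begin{align*}
  \sum_{\sigma\in P(n,m)}\prod_{i=1}^{n+m}\frac{1}{\alpha_{\sigma_x(i)}+\beta_{\sigma_y(i)}}
  = \frac{1}{\alpha_n+\beta_m}\left(\sum_{\sigma'\in P(n-1,m)}\prod_{i=1}^{n+m-1}\frac{1}{\alpha_{\sigma'_x(i)}+\beta_{\sigma'_y(i)}} + \sum_{\sigma''\in P(n,m-1)}\prod_{i=1}^{n+m-1}\frac{1}{\alpha_{\sigma''_x(i)}+\beta_{\sigma''_y(i)}}\right).
\end{align*}
Here a path in $P(n-1,m)$ is identified with the initial segment of a $\sigma$ whose last step is horizontal, and similarly for $P(n,m-1)$; the first $n+m-1$ factors depend only on that initial segment.

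Applying the induction hypothesis to each of the two inner sums, the bracket becomes
\begin{align*}
  \prod_{i=1}^{n-1}\frac{1}{\alpha_i}\prod_{j=1}^{m}\frac{1}{\beta_j} + \prod_{i=1}^{n}\frac{1}{\alpha_i}\prod_{j=1}^{m-1}\frac{1}{\beta_j}
  = \prod_{i=1}^{n-1}\frac{1}{\alpha_i}\prod_{j=1}^{m-1}\frac{1}{\beta_j}\left(\frac{1}{\beta_m}+\frac{1}{\alpha_n}\right)
  = \prod_{i=1}^{n-1}\frac{1}{\alpha_i}\prod_{j=1}^{m-1}\frac{1}{\beta_j}\cdot\frac{\alpha_n+\beta_m}{\alpha_n\beta_m}.
\end{align*}
Multiplying by the prefactor $\frac{1}{\alpha_n+\beta_m}$ cancels the $(\alpha_n+\beta_m)$ term and yields $\prod_{i=1}^n\frac{1}{\alpha_i}\prod_{j=1}^m\frac{1}{\beta_j}$, as desired. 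For the base cases one checks directly that when $n=0$ (resp. $m=0$) there is a unique path, consisting only of vertical (resp. horizontal) steps, whose product is $\prod_{j=1}^m\frac{1}{\beta_j}$ (resp. $\prod_{i=1}^n\frac{1}{\alpha_i}$), matching the convention that an empty product equals $1$.

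I do not expect any serious obstacle here; the only point requiring a little care is the bookkeeping in the bijection between $P(n,m)$-paths with a prescribed last step and $P(n-1,m)$- or $P(n,m-1)$-paths, together with the observation that the $i$-th factor for $i \le n+m-1$ is unchanged under this identification. An alternative, equivalent route would be to induct on $n$ alone: fix the path until it makes its last horizontal step at height $k$, sum the vertical contributions above and below using the one-dimensional telescoping identity $\sum$ over $k$ of $\frac{1}{\alpha_n+\beta_k}\prod_{j>k}\frac{1}{\beta_j}\prod_{j\le k}(\cdots)$, but the two-variable induction above is cleaner and avoids a nested telescoping sum.
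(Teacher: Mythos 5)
Your proof is correct, and it takes a genuinely different (and simpler) route than the paper. You peel off the \emph{last} step of each path: since every $\sigma\in P(n,m)$ ends at $(n,m)$, the final factor $1/(\alpha_n+\beta_m)$ is common to all paths, and splitting according to whether the last step is $(1,0)$ or $(0,1)$ gives the recursion $S(n,m)=\frac{1}{\alpha_n+\beta_m}\bigl(S(n-1,m)+S(n,m-1)\bigr)$, which a single induction on $n+m$ closes at once via $\frac{1}{\beta_m}+\frac{1}{\alpha_n}=\frac{\alpha_n+\beta_m}{\alpha_n\beta_m}$. The paper instead runs a nested induction: it inducts on $m$, first proving the case $m=1$ by a separate induction on $n$ (grouping paths by the position of the unique vertical step), and then, for $m>1$, decomposing paths by the location of the \emph{first} vertical step, invoking the $m=1$ identity for the initial segment, and reducing to $P(n,m-1)$ with the shifted sequence $\beta'_i=\beta_{i+1}$. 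Your decomposition avoids both the inner induction and the relabelling of the $\beta$'s, at no loss of generality; the two arguments rest on the same elementary cancellation, but yours packages it more cleanly. One small bookkeeping point, common to both proofs and implicit in the statement of the lemma: factors with $\sigma_x(i)=0$ or $\sigma_y(i)=0$ must be read with the convention $\alpha_0=\beta_0=0$, which is exactly what makes your $n=0$ and $m=0$ base cases come out as $\prod_{j=1}^m 1/\beta_j$ and $\prod_{i=1}^n 1/\alpha_i$.
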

The proof is by induction on $m$.  The proof is not central to the
current discussion, so we present it in Section~\ref{sec:proof-lemma}.

We are now ready to prove a weak form of FKG lattice property for the
Janossy density given in eq.~\ref{eq:janossy}.
\begin{theorem}
  Let $\xi$ and $\gamma$ be disjoint finite subsets of
  $D\times\mbf{C}$ such that the set $\xi\cup \gamma$ is valid.  Then,
  we have
  \begin{align}
    \label{eq:1}
    \tpi(\xi\cup \gamma)\tpi(\emptyset)\geq \tpi(\xi)\tpi(\gamma).
  \end{align}
  Moreover, if $\xi$ and $\gamma$ are such that
  $N(\xi)\cap N(\gamma)=\emptyset$, then equality holds.
\end{theorem}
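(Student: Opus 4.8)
The plan is to reduce the inequality~\eqref{eq:1} to the combinatorial identity of Lemma~\ref{lem:aux} by carefully tracking which denominators appear in the sums defining $\tilde\Pi$. Recall that, since $\xi\cup\gamma$ is valid, so are $\xi$ and $\gamma$ (validity is hereditary for unordered configurations, as it only forbids compatible opposite-color pairs), so all four Janossy densities in~\eqref{eq:1} are given by the summation formula in~\eqref{eq:janossy} rather than being zero, and the normalizing constant $K$ cancels. Write $\xi\cup\gamma=\{z_1,\dots,z_N\}$; the left side is $\tilde\Pi(\xi\cup\gamma)\cdot 1$ and the right side is $\tilde\Pi(\xi)\tilde\Pi(\gamma)$. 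The key observation is this: for a permutation $X_1^N$ of $\xi\cup\gamma$, the term $\blambda(N(X_1^i))$ depends only on the \emph{set} $\{X_1,\dots,X_i\}$, and since $N(\cdot)$ is a union over points, if we split that prefix into its $\xi$-part and its $\gamma$-part we would like $\blambda(N(X_1^i))$ to be expressible via the two parts separately. This is where the extra hypothesis matters: I expect the clean equality statement (the ``moreover'') to follow immediately because when $N(\xi)\cap N(\gamma)=\emptyset$ we get $\blambda(N(A\cup B))=\blambda(N(A))+\blambda(N(B))$ for $A\subseteq\xi$, $B\subseteq\gamma$, so that each permutation of $\xi\cup\gamma$ factors its denominator product exactly along an interleaving, and summing over all interleavings of a fixed permutation of $\xi$ with a fixed permutation of $\gamma$ is exactly the path-sum in Lemma~\ref{lem:aux} with $\alpha_i$ and $\beta_j$ the appropriate partial volumes plus multiples of $\mu$. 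Then summing over the permutations of $\xi$ and of $\gamma$ separately yields $\tilde\Pi(\xi)\tilde\Pi(\gamma)$.

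For the inequality in general, the point is that $N$ is monotone and subadditive, so for any $A\subseteq\xi$ and $B\subseteq\gamma$ one has $\blambda(N(A\cup B))\le \blambda(N(A))+\blambda(N(B))$, hence for a prefix $X_1^i$ of an interleaved permutation, writing $a$ for the number of $\xi$-points and $b=i-a$ for the number of $\gamma$-points among the first $i$,
\[
  \blambda(N(X_1^i))+i\mu \;\le\; \bigl(\blambda(N(A))+a\mu\bigr)+\bigl(\blambda(N(B))+b\mu\bigr),
\]
which is \emph{not} directly in the right form because the right side is a sum of two ``denominators'' rather than one, and $\frac1{u}\not\le\frac1{u'}$ when $u\ge u'$. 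So I would instead argue in the reverse direction: the right side $\tilde\Pi(\xi)\tilde\Pi(\gamma)$, after expanding via Lemma~\ref{lem:aux} in reverse, is a sum over pairs (permutation of $\xi$, permutation of $\gamma$, interleaving) of $\prod_i \frac1{\alpha_{(\cdot)}+\beta_{(\cdot)}}$ where now $\alpha$'s are the running $\xi$-volumes-plus-$\mu$'s and $\beta$'s the running $\gamma$-volumes-plus-$\mu$'s, and each such product is \emph{at least} the corresponding term $\prod_i\frac1{\blambda(N(X_1^i))+i\mu}$ in the expansion of $\tilde\Pi(\xi\cup\gamma)$ by the displayed subadditivity bound applied factor-by-factor. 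Summing gives $\tilde\Pi(\xi)\tilde\Pi(\gamma)\le\tilde\Pi(\xi\cup\gamma)=\tilde\Pi(\xi\cup\gamma)\tilde\Pi(\emptyset)$, i.e.~\eqref{eq:1}.

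The main obstacle I anticipate is bookkeeping rather than conceptual: one must verify that the map (permutation of $\xi$) $\times$ (permutation of $\gamma$) $\times$ (interleaving) $\to$ (permutation of $\xi\cup\gamma$) is a bijection onto all permutations, that under it the two sides' summands correspond term-by-term, and that Lemma~\ref{lem:aux} is being applied with the \emph{right} identification of $\alpha_i,\beta_j$ — namely $\alpha_a$ should be $\blambda(N(\{X_1,\dots,X_a^{(\xi)}\}))+a\mu$ minus the previous such quantity is not how the lemma reads, so really one applies the lemma after a telescoping/reindexing so that the path-sum's denominators $\alpha_{\sigma_x(i)}+\beta_{\sigma_y(i)}$ become $\bigl(\text{$\xi$-volume of first $\sigma_x(i)$ points}+\sigma_x(i)\mu\bigr)+\bigl(\text{$\gamma$-volume of first $\sigma_y(i)$ points}+\sigma_y(i)\mu\bigr)$ and the right-hand product $\prod\frac1{\alpha_i}\prod\frac1{\beta_j}$ becomes exactly the summand of $\tilde\Pi(\xi)$ times that of $\tilde\Pi(\gamma)$. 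Once the correspondence is set up correctly, both the equality case and the inequality are one-line consequences of Lemma~\ref{lem:aux} together with monotonicity/subadditivity of $\blambda\circ N$.
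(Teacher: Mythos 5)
Your proposal is correct and follows essentially the same route as the paper's proof: the canonical bijection $\mathcal{P}(\xi\cup\gamma)\simeq\mathcal{P}(\xi)\times\mathcal{P}(\gamma)\times P(n,m)$, factor-by-factor subadditivity $\blambda(N(A\cup B))\le\blambda(N(A))+\blambda(N(B))$ together with $i\mu=\sigma_x(i)\mu+\sigma_y(i)\mu$, and then Lemma~\ref{lem:aux} with $\alpha_i=\blambda(N(\mbf{a}_1^i))+i\mu$, $\beta_j=\blambda(N(\mbf{b}_1^j))+j\mu$, with equality when $N(\xi)\cap N(\gamma)=\emptyset$. Only note that the sentence claiming each interleaving product is ``at least'' the corresponding term of $\tilde\Pi(\xi\cup\gamma)$ has the comparison inverted (the union term has the smaller denominator, hence the larger reciprocal), but your displayed subadditivity bound and final conclusion $\tilde\Pi(\xi)\tilde\Pi(\gamma)\le\tilde\Pi(\xi\cup\gamma)$ are in the correct direction, matching the paper.
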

\begin{proof}\hypertarget{def:order-bijections}{}
  Let $|\xi|=n$ and $|\gamma|=m$.  There is a canonical bijection
  between $\mathcal{P}(\xi\cup \gamma)$ and
  $\mathcal{P}(\xi)\times \mathcal{P}(\gamma)\times P(n,m)$.  For
  $(\mbf{a},\mbf{b},\sigma)\in \mathcal{P}(\xi)\times
  \mathcal{P}(\gamma)\times P(n,m)$ we denote by $(\sigma,\mbf{ab})$
  the corresponding element in $\mathcal{P}(\xi\cup \gamma)$.  We will
  use the coordinate-wise notation $\sigma=(\sigma_x,\sigma_y)$ for
  $\sigma\in\bbZ^2$.  Also, for any sets $A,C\subset D\times\mbf{C}$,
  let $N_{C}(A)=N(A\cap C)$.

  We have
  \begin{equation}
    \begin{aligned}\label{eq:proof1}
      \tilde\Pi(\xi\cup \gamma)
      &=\sum_{\substack{\mbf{a}\in\mathcal{P}(\xi),\mbf{b}\in\mathcal{P}(\gamma)\\\sigma\in P(n,m)}}\prod_{l=1}^{n+m}\frac{1}{\blambda(N((\sigma,\mbf{ab})_1^l))+l\mu}\\
      &\geq
      \sum_{\substack{\mbf{a}\in\mathcal{P}(\xi),\mbf{b}\in\mathcal{P}(\gamma)\\\sigma\in P(n,m)}}\prod_{l=1}^{n+m}\frac{1}{\blambda(N_{\xi}((\sigma,\mbf{ab})_1^l))+\blambda(N_{\gamma}((\sigma,\mbf{ab})_1^l))+l\mu}\\
      &=
      \sum_{\substack{\mbf{a}\in\mathcal{P}(\xi),\mbf{b}\in\mathcal{P}(\gamma)\\\sigma\in
          P(n,m)}}\prod_{l=1}^{n+m}\frac{1}{\blambda(N(\mbf{a}_1^{\sigma_x(l)}))+\blambda(N(\mbf{b}_1^{\sigma_y(l)}))+l\mu}\\
      &=\tilde\Pi(\xi)\tilde\Pi(\gamma),
    \end{aligned}
  \end{equation}
  where in the second step we have used that
  $$\blambda(N((\sigma,\mbf{ab})_1^l))\leq
  \blambda(N_{\xi}((\sigma,\mbf{ab})_1^l))
  +\blambda(N_{\gamma}((\sigma,\mbf{ab})_1^l)),$$ and in the last step
  we used Lemma~\ref{lem:aux} with $\alpha_i=\lambda(N(X_1^i))+i\mu$ and
  $\beta_i=\lambda(N(Y_1^i))+i\mu$.  The proof now follows, since
  $K=\tilde\pi(\emptyset)$.  Note also that if
  $N(\xi)\cap N(\gamma)=\emptyset$, then equality holds in
  eq.~\ref{eq:proof1}.
\end{proof}
The above theorem is useful in the proof of the main result of this
section only through the following corollary.
\begin{corollary}\label{cor:product-cor}
  If $\gamma=\gamma^{\ttR }\cup \gamma^{\ttB}$ is a valid
  configuration (i.e., $\gamma\cap N(\gamma)=\emptyset$), where
  $\gamma^{\ttR }$ and $\gamma^{\ttB}$ are the red and blue particles
  respectively in $\gamma$.  Then,
  \begin{align*}
    \tpi(\gamma) =\frac{1}{K} \tpi (\gamma^{\ttR })
    \tpi(\gamma^{\ttB}),
  \end{align*}
  or equivalently,
  \begin{align*}
    \tPi(\gamma) = \tPi (\gamma^{\ttR })
    \tPi(\gamma^{\ttB}).
  \end{align*}
\end{corollary}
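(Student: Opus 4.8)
\emph{Proof plan.} The plan is to obtain this corollary as an immediate consequence of the theorem just proved, applied to the red and blue sub-configurations. Concretely, I would invoke that theorem with its first set taken to be $\xi := \gamma^{\ttR}$ and its second set taken to be $\gamma^{\ttB}$ (overloading the theorem's notation). These two sets are disjoint, since every point of $\gamma^{\ttR}$ carries the mark $\ttR$ and every point of $\gamma^{\ttB}$ carries the mark $\ttB$; they are finite; and their union is $\gamma$, which is valid by hypothesis. Hence the theorem applies and yields the inequality $\tpi(\gamma)\,\tpi(\emptyset) \ge \tpi(\gamma^{\ttR})\,\tpi(\gamma^{\ttB})$.

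Next I would verify that the ``moreover'' clause of the theorem is in force, i.e.\ that $N(\gamma^{\ttR}) \cap N(\gamma^{\ttB}) = \emptyset$, which upgrades the inequality to an equality. This is purely a statement about colors: by definition $N(x) = B(p_x,1)\times\{\bar{c_x}\}$, so $N(\gamma^{\ttR}) \subseteq D\times\{\ttB\}$ while $N(\gamma^{\ttB}) \subseteq D\times\{\ttR\}$, and these two subsets of $D\times\mbf{C}$ are disjoint because $\ttR \neq \ttB$. Therefore equality holds in eq.~\ref{eq:1}, giving $\tpi(\gamma^{\ttR}\cup\gamma^{\ttB})\,\tpi(\emptyset) = \tpi(\gamma^{\ttR})\,\tpi(\gamma^{\ttB})$.

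Finally I would read off the two asserted identities. From eq.~\ref{eq:janossy} one has $\tpi(\emptyset) = K$, since the empty configuration is valid and $\tilde\Pi(\emptyset)$ is an empty product equal to $1$; dividing the previous display by $K$ gives $\tpi(\gamma) = \tfrac{1}{K}\tpi(\gamma^{\ttR})\tpi(\gamma^{\ttB})$. For the equivalent $\tPi$-form, note that a single-color configuration can never contain an incompatible opposite-color pair, so $\gamma^{\ttR}$ and $\gamma^{\ttB}$ are each automatically valid; hence $\tpi = K\tilde\Pi$ on each of $\gamma$, $\gamma^{\ttR}$, $\gamma^{\ttB}$, and cancelling the common factors of $K$ (and the validity indicators, all equal to $1$) turns the first identity into $\tPi(\gamma) = \tPi(\gamma^{\ttR})\tPi(\gamma^{\ttB})$.

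I do not expect a genuine obstacle here; the one point that has to be noticed is that the color-disjointness of $N(\gamma^{\ttR})$ and $N(\gamma^{\ttB})$ is precisely the hypothesis that triggers the equality case of the theorem, after which the argument is bookkeeping with the normalizing constant and the validity indicators.
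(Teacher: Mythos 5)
Your proposal is correct and is essentially the argument the paper intends: the corollary is exactly the equality case of the preceding theorem applied with $\xi=\gamma^{\ttR}$ and the second set $\gamma^{\ttB}$, using that $N(\gamma^{\ttR})\subseteq D\times\{\ttB\}$ and $N(\gamma^{\ttB})\subseteq D\times\{\ttR\}$ are disjoint, followed by the bookkeeping $\tpi(\emptyset)=K$ and the automatic validity of single-color configurations to pass to the $\tPi$ form.
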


We now state the FKG lattice property for the usual subset ordering
for the same type of particles.
\begin{theorem}\label{thm:fkg-same-type}
  Let $\xi$ and $\gamma$ be finite subsets of $D\times \{\ttR\}$.  Then,
  \begin{align}
    \label{eq:fkg-same-type}
    \tPi(\xi\cup \gamma)\tPi(\xi\cap \gamma)\geq \tPi(\xi)\tPi(\gamma).
  \end{align}
  A similar property holds when $\xi$ and $\gamma$ are finite subsets of
  $D\times \{\ttB\}$.
\end{theorem}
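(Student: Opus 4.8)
The plan is to reduce the same-type FKG inequality \eqref{eq:fkg-same-type} to the disjoint-union factorization already established. Since $\xi,\gamma \subseteq D\times\{\ttR\}$, every subset of $D\times\{\ttR\}$ is automatically valid (Condition~2 in Definition~\ref{cond:vali-stat} is vacuous when all points share the same color, and Condition~3 concerns only $\ttu$/$\ttm$ marks, irrelevant in the unordered Janossy setting). So the indicator factors all equal $1$, and I only need to compare $\tPi$ values. Write $A = \xi\setminus\gamma$, $B = \gamma\setminus\xi$, $C = \xi\cap\gamma$, so that $A,B,C$ are pairwise disjoint, $\xi = A\cup C$, $\gamma = B\cup C$, $\xi\cup\gamma = A\cup B\cup C$, and $\xi\cap\gamma = C$. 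The inequality to prove becomes $\tPi(A\cup B\cup C)\,\tPi(C) \ge \tPi(A\cup C)\,\tPi(B\cup C)$.

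The key step is a \emph{conditional} version of the computation in the proof of the weak FKG theorem, now performed relative to the fixed ``background'' configuration $C$ rather than relative to $\emptyset$. Concretely, I would expand $\tPi(A\cup B\cup C)$ as a sum over permutations of $A\cup B\cup C$, and use the bijection of that permutation set with $\mathcal P(A)\times\mathcal P(B)\times\mathcal P(C)\times(\text{interleavings of }A,B,C)$. Grouping first by the placement of the $C$-points and the order $\mbf c\in\mathcal P(C)$, the remaining sum over how $A$ and $B$ are interleaved \emph{among themselves, in the gaps left by} $\mbf c$ is, after the same volume superadditivity bound $\blambda(N(\cdot))\le \blambda(N_A(\cdot)) + \blambda(N_B(\cdot))$ as in \eqref{eq:proof1}, exactly a weighted lattice-path sum. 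Here the crucial point is that all points are the same color, so $N(P\cup Q) = N(P)\cup N(Q)$ has no cross terms between $A\cup C$-type balls and $B\cup C$-type balls in the sense needed — the superadditivity $\blambda(N(P\cup Q))\le \blambda(N(P)) + \blambda(N(Q))$ holds with $P$ containing the $A$- and $C$-contributions already accumulated and $Q$ the $B$-contributions, so the ``$\alpha$'' and ``$\beta$'' sequences fed into Lemma~\ref{lem:aux} are shifted indices that track the union of balls around $\{C\text{-prefix}\}\cup\{A\text{-prefix}\}$ and around $\{C\text{-prefix}\}\cup\{B\text{-prefix}\}$ respectively. Applying Lemma~\ref{lem:aux} with these $\alpha,\beta$ collapses the interleaving sum, and after also summing over $\mbf c\in\mathcal P(C)$ and the placement of the $C$-points one recovers $\tPi(A\cup C)\,\tPi(B\cup C)$ up to the factor $\tPi(C)$ in the denominator — the extra $\tPi(C)$ on the left of the claimed inequality compensating for the fact that the $C$-block gets ``counted with its own internal permutation sum'' once in each of $\tPi(A\cup C)$ and $\tPi(B\cup C)$.

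I would organize the write-up as: (i) observe validity is automatic and the indicators drop; (ii) set up the three-way partition $A,B,C$ and the permutation/interleaving bijection; (iii) insert the superadditivity bound exactly as in \eqref{eq:proof1}; (iv) for each fixed embedding of the $C$-points and each $\mbf c\in\mathcal P(C)$, apply Lemma~\ref{lem:aux} to the interleaving of $A$ and $B$ with the appropriate shifted weight sequences $\alpha_\bullet = \blambda(N(\text{relevant }A\text{-and-}C\text{ prefix})) + (\bullet)\mu$ and likewise for $\beta$; (v) resum over the $C$-data to identify the right-hand side, picking up the $1/\tPi(C)$; (vi) note the $\ttB$ case is identical by symmetry. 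The main obstacle is bookkeeping in step (iv): one must verify that the denominators $\blambda(N(\text{prefix}))+l\mu$ in the full configuration, after the superadditive split and after conditioning on the $C$-points being fixed, really do organize into the product form $\frac1{\alpha_{\sigma_x(l)}+\beta_{\sigma_y(l)}}$ demanded by Lemma~\ref{lem:aux} with \emph{$C$-dependent but $\sigma$-independent} sequences $\alpha,\beta$ — i.e.\ that the contribution of the $C$-points to each denominator depends only on which $C$-points precede position $l$, not on the $A$/$B$ interleaving, which is exactly where same-color-ness (so that $N$ is additive over the color-classes and the $C$-ball-volume already absorbed does not interact with whether the next point is from $A$ or $B$) is used. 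Once that is checked the rest is a routine resummation mirroring \eqref{eq:proof1}.
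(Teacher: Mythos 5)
Your opening reduction is the same as the paper's: set $A=\xi\backslash\gamma$, $B=\gamma\backslash\xi$, $C=\xi\cap\gamma$ and prove $\tPi(A\cup B\cup C)\,\tPi(C)\geq \tPi(A\cup C)\,\tPi(B\cup C)$, with validity/indicators irrelevant since the claim is about $\tPi$. But the assertion that after the volume bound ``the rest is a routine resummation mirroring \eqref{eq:proof1}'' hides the actual difficulty, and as described your step (iv) does not go through. First, a structural problem: in the merged permutation of $A\cup B\cup C$, a $C$-point increments \emph{both} of your proposed weight sequences (the one tracking $N(\{A\text{-prefix}\}\cup\{C\text{-prefix}\})$ and the one tracking $N(\{B\text{-prefix}\}\cup\{C\text{-prefix}\})$) simultaneously; this is a diagonal step, so the denominators are not of the form $\alpha_{\sigma_x(l)}+\beta_{\sigma_y(l)}$ for an increasing lattice path and Lemma~\ref{lem:aux} does not apply. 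Relatedly, $\tPi(A\cup B\cup C)$ has $n+m+k$ factors while $\tPi(A\cup C)\tPi(B\cup C)$ has $n+m+2k$; the extra $\tPi(C)$ cannot be absorbed by hand-waving -- the paper handles this by introducing a second, relabelled copy $\bar C$ and expanding $\tPi(A\cup B\cup C)\tPi(\bar C)$ via Lemma~\ref{lem:aux} over four-dimensional paths $P(n,m,k,k)$. Second, ``same-color-ness'' only gives $N(P\cup Q)=N(P)\cup N(Q)$ as sets; the \emph{measure} is not additive, and the overlaps $N_A\cap N_C$, $N_B\cap N_C$ are precisely the crux. If you use plain subadditivity, the collapsed lower bound has denominators $\blambda(N(\mbf{a}))+\blambda(N(\mbf{c}))+\cdots$, which exceed $\blambda(N(\mbf{ac}\text{-prefix}))+\cdots$, so the collapsed product sits \emph{below} $\tPi(A\cup C)\tPi(B\cup C)$ and the chain of inequalities breaks; one must instead use the sharper inclusion--exclusion bound \eqref{eq:fkg-volu-ineq} that retains the subtracted cross terms.

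Even after these corrections, one is left with the comparison \eqref{eq:fkg-toprove}, where the two sides differ only in whether the $B$-cross-term is $\blambda(N(\mbf{b}_1^{\sigma_y(i)})\cap N(\mbf{c}_1^{\sigma_z(i)}))$ or $\blambda(N(\mbf{b}_1^{\sigma_y(i)})\cap N(\bar{\mbf{c}}_1^{\sigma_w(i)}))$, and this is \emph{not} a term-by-term inequality: for a fixed permutation and path it can go either way. The paper's proof of this step is genuinely probabilistic: condition on the $\sigma$-algebra that forgets which copy ($C$ or $\bar C$) each repeated particle belongs to, encode the assignment by i.i.d.\ Bernoulli variables, write each reciprocal as $\int_0^\infty e^{-cx}dx$, and prove the resulting inequality by the FKG inequality on the hypercube with the log-submodular weight of Claim~\ref{claim:fkg-one} and the increasing functions of Claim~\ref{claim:fkg-two}, finishing with a $\beta\leftrightarrow\bar\beta$ symmetry that makes one factor mean-zero. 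This randomization-plus-discrete-FKG argument is the heart of the theorem and is entirely absent from your proposal, so as written the proof has a genuine gap rather than being a bookkeeping exercise.
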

The statement of the previous theorem is combinatorial in nature.
However, its proof is interesting since we were able to use
probabilistic tools by introducing artificial randomness.  In
particular, at one stage in the proof, we employ the FKG inequality on
the lattice $\{0,1\}^n$ (for some $n\in\bbN$).  For the sake of exposition,
this proof is moved to Appendix~\ref{sec:proof-theorem-fkg}.

Using Corollary~\ref{cor:product-cor} and
Theorem~\ref{thm:fkg-same-type}, we conclude the main result of this
section.
\begin{corollary}
  \label{cor:fkg}
  For any two finite subsets, $\xi=(\xi^\ttR,\xi^\ttB)$ and
  $\gamma=(\gamma^\ttR,\gamma^\ttB)$ of $D\times \mbf{C}$, we have
  \begin{align}
    \label{eq:fkg}
    \tPi(\xi\vee\gamma)\tPi(\xi\wedge\gamma)\geq \tPi(\xi)\tPi(\gamma),
  \end{align}
  where the
  $\xi\vee\gamma = (\xi^\ttR\cup\gamma^\ttR,\xi^\ttB\cap \gamma^\ttB)$
  and
  $\xi\wedge\gamma = (\xi^\ttR\cap\gamma^\ttR,\xi^\ttB\cup
  \gamma^\ttB)$.
\end{corollary}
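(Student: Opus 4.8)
The plan is to deduce the mixed–color inequality~\eqref{eq:fkg} from two single–color instances of Theorem~\ref{thm:fkg-same-type}, combined through the color factorization of $\tPi$ provided by Corollary~\ref{cor:product-cor}. Once those two results are granted, this corollary carries no new probabilistic content: it is an algebraic assembly.

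Concretely, I would first note that $\vee$ and $\wedge$ preserve validity, so that Corollary~\ref{cor:product-cor} applies to all four configurations in~\eqref{eq:fkg} whenever $\xi$ and $\gamma$ are valid: a red point of $\xi\vee\gamma$ lies in $\xi^\ttR$ or in $\gamma^\ttR$, while a blue point of $\xi\vee\gamma$ lies in $\xi^\ttB\cap\gamma^\ttB$, hence in both $\xi^\ttB$ and $\gamma^\ttB$, so the distance bound between them is inherited from validity of $\xi$, respectively of $\gamma$; the check for $\xi\wedge\gamma$ is symmetric in the two colors. Using
\[
  (\xi\vee\gamma)^\ttR=\xi^\ttR\cup\gamma^\ttR,\quad (\xi\vee\gamma)^\ttB=\xi^\ttB\cap\gamma^\ttB,\quad (\xi\wedge\gamma)^\ttR=\xi^\ttR\cap\gamma^\ttR,\quad (\xi\wedge\gamma)^\ttB=\xi^\ttB\cup\gamma^\ttB,
\]
and applying the factorization $\tPi(\eta)=\tPi(\eta^\ttR)\tPi(\eta^\ttB)$ of Corollary~\ref{cor:product-cor} to $\xi$, $\gamma$, $\xi\vee\gamma$, $\xi\wedge\gamma$, the left side of~\eqref{eq:fkg} becomes $\bigl[\tPi(\xi^\ttR\cup\gamma^\ttR)\,\tPi(\xi^\ttR\cap\gamma^\ttR)\bigr]\bigl[\tPi(\xi^\ttB\cup\gamma^\ttB)\,\tPi(\xi^\ttB\cap\gamma^\ttB)\bigr]$ and the right side becomes $\bigl[\tPi(\xi^\ttR)\,\tPi(\gamma^\ttR)\bigr]\bigl[\tPi(\xi^\ttB)\,\tPi(\gamma^\ttB)\bigr]$. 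Theorem~\ref{thm:fkg-same-type}, applied once to the red subsets $\xi^\ttR,\gamma^\ttR\subseteq D\times\{\ttR\}$ and once to the blue subsets $\xi^\ttB,\gamma^\ttB\subseteq D\times\{\ttB\}$, shows that the first bracket on the left is at least $\tPi(\xi^\ttR)\tPi(\gamma^\ttR)$ and the second at least $\tPi(\xi^\ttB)\tPi(\gamma^\ttB)$; since every factor $\tPi(\cdot)$ is strictly positive (a finite sum of positive terms, with $\tPi(\emptyset)=1$), multiplying these two inequalities gives~\eqref{eq:fkg}. For $\xi$ or $\gamma$ not valid the same computation applies verbatim, since the factorization $\tPi(\eta)=\tPi(\eta^\ttR)\tPi(\eta^\ttB)$ in fact needs no validity hypothesis — one sees from the proof of the theorem containing~\eqref{eq:1} that it only requires $N(\eta^\ttR)\cap N(\eta^\ttB)=\emptyset$, which is automatic as $N(\eta^\ttR)\subseteq D\times\{\ttB\}$ and $N(\eta^\ttB)\subseteq D\times\{\ttR\}$.

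I do not expect a genuine obstacle: the real work is Theorem~\ref{thm:fkg-same-type} — the single–type FKG property, proved by introducing auxiliary randomness on a cube $\{0,1\}^n$ — together with the color factorization of Corollary~\ref{cor:product-cor}; the rest is bookkeeping, the only delicate point being the validity-preservation check above. That check is also what lets the statement be upgraded from $\tPi$ to the Janossy density $\tpi(\eta)=K\,\1(\eta\textrm{ is valid})\,\tPi(\eta)$ itself, yielding log–submodularity of $\tpi$ on $(M(D,\mbf{C}),\vee,\wedge)$ and hence the positive–association inequality~\eqref{eq:positive-assoc} for all functions increasing in the order $\xi\geq\gamma\iff\xi^\ttR\supseteq\gamma^\ttR,\ \xi^\ttB\subseteq\gamma^\ttB$.
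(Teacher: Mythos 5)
Your proposal is correct and follows essentially the same route as the paper: factor $\tPi$ into its red and blue parts via Corollary~\ref{cor:product-cor}, apply Theorem~\ref{thm:fkg-same-type} once per color, and recombine. Your added validity-preservation check and the observation that the factorization only needs $N(\eta^\ttR)\cap N(\eta^\ttB)=\emptyset$ are sound refinements of the same argument, not a different approach.
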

\begin{proof}
  By Corollary~\ref{cor:product-cor} we have
  \begin{align*}
    \tPi(\xi)\tPi(\gamma)=\tPi(\xi^\ttR)\tPi(\xi^\ttB)\tPi(\gamma^\ttR)\tPi(\gamma^\ttB).
  \end{align*}
  By Theorem~\ref{thm:fkg-same-type}, we have
  \begin{align*}
    \tPi(\xi^\ttR)\tPi(\gamma^\ttR)\tPi(\xi^\ttB)\tPi(\gamma^\ttB)\leq \tPi(\xi^\ttR\cup\gamma^\ttR)\tPi(\xi^\ttB\cap\gamma^\ttB)\tPi(\xi^\ttR\cap\gamma^\ttR)\tPi(\xi^\ttR\cup\gamma^\ttB).
  \end{align*}
  Now, since $\xi\vee \gamma$ and $\xi\wedge \gamma$ are valid
  configurations, the proof follows by using
  Corollary~\ref{cor:product-cor} again.
\end{proof}

From Corollary~\ref{cor:fkg} and the FKG inequality (see Appendix in
\cite{chayes1995analysis}), we can conclude that the stationary
measure is positively associated, i.e., for any two increasing
functions $f$ and $g$, then
\begin{align}\label{eq:posi-asso}
  \EE_{\tilde\eta}f(\tilde\eta)g(\tilde\eta)\geq\EE_{\tilde\eta}f(\tilde\eta)\EE_{\tilde\eta}g(\tilde\eta),
\end{align}
where $\tilde\eta$ is a version of the unordered stationary process,
and has the density $\tilde\pi$ with respect to the Poisson point
process with intensity $\blambda$. The above positive association
inequalities also imply that the marginal point processes of the red
(or the blue points) are also positively associated. This can be seen
by taking increasing functions $f$ and $g$ that depend only on the red
points (or the blue points). From this result and Corollary 3.1 of
\cite{blaszczyszyn2014comparison}, we can conclude that
$\tilde\eta^\ttR$ and $\tilde\eta^\ttB$ are weakly-super
Poissonian. Intuitively, this means that the points are more clustered
than the points in a Poisson point process of the same intensity.

\subsection{Boundary Conditions and Monotonicity}
\label{sec:monot-extens-meas}
In this section, we assume that $D$ is a compact subset of the
Euclidean domain $\bbR^d$, for some $d\geq 1$. We will use the FKG
lattice property to prove monotonicity of measures under different
boundary conditions. To state these theorems we will require the
following notation. Let $\zeta\subset \bbR^d\backslash D\times\mbf{C}$
be a valid state, i.e., $N(\zeta)\cap\zeta=\emptyset$. For any such
boundary condition, we define a measure on $M(D,\mbf{C})$ with Janossy
density
\begin{equation}
  \label{eq:2}
  \begin{aligned}
    \tpi_{D,\zeta}(x_1^n)&=K_{D,\zeta}\1(N(x_1^n)\cap (\zeta\cup
    x_1^n)=\emptyset)\tPi_{\zeta}(x_1^n),\\
    \tPi_\zeta(x_1^n)&=\sum_{(X_1^n)\in\cP(x_1^n)}\prod_{i=1}^n\frac{1}{\blambda(N(X_1^i)\cap
      N(\zeta)^{c})+i\mu}.
  \end{aligned}
\end{equation}
Three important boundary conditions are
$\zeta = (\bbR^d\backslash D)\times\{\ttR\}$, $\zeta = \emptyset$ and
$\zeta = (\bbR^d\backslash D)\times\{\ttB\}$. These are termed the
\emph{red}, the \emph{free} and the \emph{blue} boundary conditions
respectively. We use special notation for the densities with these
boundary conditions, namely, $\tpi_{S,\ttR}$, $\tpi_{S}$ and
$\tpi_{S,\ttB}$.

The boundary conditions can also be partially ordered: let
$\zeta_1\geq \zeta_2$ if and only if $\zeta_1^\ttR\supset\zeta_2^\ttR$
and $\zeta_1^\ttB\subset\zeta_2^\ttB$. We are now in a position to
state the first result of this section.

\begin{theorem}
  Let $D\subset \bbR^d$ be compact set. Let $\zeta_1\geq \zeta_2$ be
  two boundary conditions on $D$. Then, the measure with density
  $\tpi_{D,\zeta_1}$ stochastically dominates the measure with density
  $\tpi_{D,\zeta_2}$.
\end{theorem}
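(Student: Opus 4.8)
The plan is to deduce stochastic domination from a pointwise FKG-type monotonicity of the Janossy densities, exactly as one does for the Widom--Rowlinson model with the same lattice structure on $M(D,\mbf{C})$ (where $\xi > \gamma$ iff $\xi^\ttR \supset \gamma^\ttR$ and $\xi^\ttB \subset \gamma^\ttB$). By the standard Holley/FKG criterion for stochastic domination of measures on a finite distributive lattice (and its continuum point-process version, via the discretization arguments referenced from \cite{chayes1995analysis,georgii1997stochastic}), it suffices to show that for all $\xi,\gamma\in M(D,\mbf{C})$,
\begin{align*}
  \tpi_{D,\zeta_1}(\xi\vee\gamma)\,\tpi_{D,\zeta_2}(\xi\wedge\gamma)
  \;\geq\; \tpi_{D,\zeta_1}(\xi)\,\tpi_{D,\zeta_2}(\gamma),
\end{align*}
with $\xi\vee\gamma=(\xi^\ttR\cup\gamma^\ttR,\xi^\ttB\cap\gamma^\ttB)$ and $\xi\wedge\gamma=(\xi^\ttR\cap\gamma^\ttR,\xi^\ttB\cup\gamma^\ttB)$ as in Corollary~\ref{cor:fkg}. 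Since the boundary conditions enter the density only through the factor $N(\zeta)^c$ inside each denominator and through the validity indicator, I would split the verification into the combinatorial part and the indicator part.

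First, the indicator part: I need $\1(N(\xi\vee\gamma)\cap((\zeta_1)\cup(\xi\vee\gamma))=\emptyset)\cdot\1(N(\xi\wedge\gamma)\cap((\zeta_2)\cup(\xi\wedge\gamma))=\emptyset)\geq \1(\cdots\xi\cdots)\cdot\1(\cdots\gamma\cdots)$ — equivalently, whenever $\xi$ is valid for $\zeta_1$ and $\gamma$ is valid for $\zeta_2$, then $\xi\vee\gamma$ is valid for $\zeta_1$ and $\xi\wedge\gamma$ is valid for $\zeta_2$. This is a purely set-theoretic check about unit balls: a red--blue overlap within $\xi\vee\gamma$ (or between $\xi\vee\gamma$ and $\zeta_1$) must already have been present in $\xi$ or in $\gamma$; the monotonicity $\zeta_1\geq\zeta_2$ and the lattice definition are exactly what make this go through, analogously to the corresponding lemma for Widom--Rowlinson.

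Second, the combinatorial inequality on $\tPi_\zeta$: I would extend the argument pattern already used in the excerpt — Corollary~\ref{cor:product-cor} (which factorizes $\tPi$ over colors for valid configurations, and whose proof only uses Lemma~\ref{lem:aux} and is insensitive to intersecting the integration domain with a fixed $N(\zeta)^c$) together with Theorem~\ref{thm:fkg-same-type} (the FKG inequality for a single color under ordinary set inclusion). The key observation is that $\tPi_{\zeta_1}(\xi^\ttR)\tPi_{\zeta_2}(\gamma^\ttR)\le \tPi_{\zeta_1}(\xi^\ttR\cup\gamma^\ttR)\tPi_{\zeta_1}(\xi^\ttR\cap\gamma^\ttR)$ and similarly for blue with the roles of $\vee/\wedge$ swapped; here I need a mild strengthening of Theorem~\ref{thm:fkg-same-type} allowing the two sides to carry the different boundary-condition weights, using $N(\zeta_1)^c\subseteq N(\zeta_2)^c$ to compare denominators termwise before applying the single-color FKG argument. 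Combining the color-factorization with these single-color inequalities and re-factorizing via Corollary~\ref{cor:product-cor} for the valid configurations $\xi\vee\gamma$, $\xi\wedge\gamma$ yields the desired cross-inequality.

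\textbf{Main obstacle.} The delicate point is getting the boundary-dependent weights to cooperate with the single-color FKG step: Theorem~\ref{thm:fkg-same-type} is stated for a fixed weight $\blambda(N(X_1^i))+i\mu$, whereas here the two factors on the left use the $\zeta_1$-truncated volume and the two on the right the $\zeta_2$-truncated one (or vice versa for the blue color). I would handle this by first using $N(\zeta_1)^c\subseteq N(\zeta_2)^c$, hence $\blambda(N(X_1^i)\cap N(\zeta_1)^c)\le\blambda(N(X_1^i)\cap N(\zeta_2)^c)$, to pass to a common (larger) truncation on one side, and then invoke Theorem~\ref{thm:fkg-same-type} with that common weight; alternatively, one re-runs the proof of Theorem~\ref{thm:fkg-same-type} (with its artificial-randomness/lattice-$\{0,1\}^n$ FKG argument) directly for the truncated volume $\blambda(\,\cdot\,\cap N(\zeta)^c)$, which is still monotone in the configuration, so the argument carries over verbatim. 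Everything else — the lattice-theoretic domination criterion and the continuum lifting — is standard once the pointwise log-supermodularity across $\zeta_1,\zeta_2$ is in hand.
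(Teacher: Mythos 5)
Your overall architecture coincides with the paper's: apply Holley's inequality, so that it suffices to verify the cross-inequality $\tpi_{D,\zeta_1}(\xi\vee\gamma)\,\tpi_{D,\zeta_2}(\xi\wedge\gamma)\geq\tpi_{D,\zeta_1}(\xi)\,\tpi_{D,\zeta_2}(\gamma)$; check the validity indicators separately; factor over colors via Corollary~\ref{cor:product-cor}; and reduce to a single-color inequality with boundary-truncated volumes. However, your treatment of the crux — the single-color inequality $\tPi_{\zeta_1}(\eta\cup\gamma)\tPi_{\zeta_2}(\eta\cap\gamma)\geq\tPi_{\zeta_1}(\eta)\tPi_{\zeta_2}(\gamma)$ with \emph{two different} truncations — has a genuine gap. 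Your primary strategy (use $\blambda(N(X_1^i)\cap N(\zeta_1)^c)\leq\blambda(N(X_1^i)\cap N(\zeta_2)^c)$ to pass to a common truncation, then invoke Theorem~\ref{thm:fkg-same-type} with that common weight) cannot work: the pointwise comparison $\tPi_{\zeta_1}\geq\tPi_{\zeta_2}$ on red configurations goes the wrong way on exactly one factor in either implementation. Replacing $\tPi_{\zeta_1}(\eta\cup\gamma)$ by $\tPi_{\zeta_2}(\eta\cup\gamma)$ on the left and applying the common-$\zeta_2$ FKG only yields $\tPi_{\zeta_2}(\eta)\tPi_{\zeta_2}(\gamma)$, which is dominated by, not dominating, the target $\tPi_{\zeta_1}(\eta)\tPi_{\zeta_2}(\gamma)$; symmetrically, upgrading $\tPi_{\zeta_2}(\gamma)$ to $\tPi_{\zeta_1}(\gamma)$ on the right forces you to compare $\tPi_{\zeta_2}(\eta\cap\gamma)$ with the larger $\tPi_{\zeta_1}(\eta\cap\gamma)$, again in the wrong direction. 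So no termwise monotone comparison reduces the mixed-weight inequality to the fixed-weight Theorem~\ref{thm:fkg-same-type}.

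Your fallback ("re-run the proof of Theorem~\ref{thm:fkg-same-type} for the truncated volume $\blambda(\,\cdot\,\cap N(\zeta)^c)$") points in the right direction but, as stated with a single $\zeta$, it would only reprove the same-boundary inequality $\tPi_{\zeta}(\eta\cup\gamma)\tPi_{\zeta}(\eta\cap\gamma)\geq\tPi_{\zeta}(\eta)\tPi_{\zeta}(\gamma)$, which is not what Holley requires. What the paper actually does is modify the randomized inequality~\ref{eq:fkg-4} into the mixed-weight version~\ref{eq:fkg-4-new}, in which the $\zeta_1$-truncation is attached to the blocks coming from $\eta\backslash\gamma$ and one copy of $\eta\cap\gamma$ (the $\mathfrak{a}$ and $\mathfrak{c}$ sequences), while the $\zeta_2$-truncation is attached to the blocks from $\gamma\backslash\eta$ and the other copy (the $\mathfrak{b}$ and $\bar{\mathfrak{c}}$ sequences), and then re-verifies the log-submodularity of the auxiliary measure and the monotonicity of the two increasing functions for these asymmetric truncations. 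That verification is routine but not "verbatim": the measure $\nu$ and the functions $f,g$ of the appendix now carry different intersecting sets depending on which block a term belongs to, and one must check the volume cancellations of Claims~\ref{claim:fkg-one} and \ref{claim:fkg-two} still go through. To complete your proof you should drop the common-truncation reduction and carry out this mixed-weight adaptation explicitly.
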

\begin{proof}[Outline of the proof]
  By Holley's inequality \cite{holley1974remarks}, it is enough to
  prove that for two states, $\eta$ and $\gamma\in M(D,\mbf{C})$, we
  have
  \begin{align}
   \tpi_{D,\zeta_1}(\eta\vee\gamma) \tpi_{D,\zeta_2}(\eta\wedge\gamma)\geq \tpi_{D,\zeta_1}(\eta)\tpi_{D,\zeta_2}(\gamma).
  \end{align}
  We first note that if $N(\eta)\cap( \zeta\cup\eta)=\emptyset$ and
  $N(\gamma)\cap(\zeta_2\cup\gamma)=\emptyset$, then
  \begin{align*}
    N(\eta\vee\gamma)\cap(\zeta_1\cup(\eta\vee\gamma))=N(\eta\wedge\gamma)\cap(\zeta_2\cup((\eta\wedge\gamma))=\emptyset.
  \end{align*}
  Since the red and blue subsets do not interact by
  Corollary~\ref{cor:product-cor}, we only need to show that if
  $\zeta_2\subset\zeta_1\subset(\bbR^d\backslash D)\times\{\ttR\}$ and
  $\eta$ and $\gamma\in M(D,\{\ttR\})$, then
  \begin{align*}
   \tPi_{\zeta_1}(\eta\cup\gamma)\tPi_{\zeta_2}(\eta\cap\gamma)\geq\tPi_{\zeta_1}(\eta)\tPi_{\zeta_2}(\gamma).
  \end{align*}
  The proof of the last statement follows by simple modification of
  the proof of Theorem~\ref{thm:fkg-same-type}, presented in
  Appendix~\ref{sec:proof-theorem-fkg}. Specifically,
  Equation~\ref{eq:fkg-4} in the appendix is modified to
  \begin{equation}
\begin{aligned}\label{eq:fkg-4-new}
  &
  \EE\prod_{i=1}^{n+m+2k}\left(\substack{\blambda(N(\mathfrak{a}_{1}^{\mathfrak{S}_x(i)})\cap
        N(\zeta_1)^c)+\blambda(N(\mathfrak{b}_{1}^{\mathfrak{S}_y(i)})\cap N(\zeta_2)^c)+\blambda(N(\mathfrak{c}_{1}^{\mathfrak{S}_z(i)})\cap N(\zeta_1)^c)+\blambda(N(\bar{\mathfrak{c}}_{1}^{\mathfrak{S}_w(i)})\cap N(\zeta_2)^c)\\-\blambda(N(\mathfrak{a}_{1}^{\mathfrak{S}_x(i)})\cap
  N(\mathfrak{c}_{1}^{\mathfrak{S}_z(i)})\cap N(\zeta_1)^c)-\blambda(N(\mathfrak{b}_{1}^{\mathfrak{S}_y(i)})\cap
  N(\mathfrak{c}_{1}^{\mathfrak{S}_z(i)})\cap N(\zeta_2)^c)+i\mu}\right)^{-1}\\
  &\geq\EE\prod_{i=1}^{n+m+2k}\left(\substack{\blambda(N(\mathfrak{a}_{1}^{\mathfrak{S}_x(i)})\cap N(\zeta_1)^c)+\blambda(N(\mathfrak{b}_{1}^{\mathfrak{S}_y(i)})\cap N(\zeta_2)^c)+\blambda(N(\mathfrak{c}_{1}^{\mathfrak{S}_z(i)})\cap N(\zeta_1)^c)+\blambda(N(\bar{\mathfrak{c}}_{1}^{\mathfrak{S}_w(i)})\cap N(\zeta_2)^c)\\-\blambda(N(\mathfrak{a}_{1}^{\mathfrak{S}_x(i)})\cap
  N(\mathfrak{c}_{1}^{\mathfrak{S}_z(i)})\cap N(\zeta_1)^c)-\blambda(N(\mathfrak{b}_{1}^{\mathfrak{S}_y(i)})\cap
  N(\bar{\mathfrak{c}}_{1}^{\mathfrak{S}_w(i)})\cap N(\zeta_2)^c)+i\mu}\right)^{-1},
\end{aligned}
\end{equation}
where the expectation is over a uniformly random
choice
\begin{align*}
  &(\mathfrak{S,abc\bar{c}})\in
  P(n,m,k,k)\times\cP(\eta\backslash\gamma)\times\cP(\gamma\backslash\eta)\times\cP(\eta\cap\gamma)\times\cP(\eta\cap\gamma),\\
  &n=|\eta\backslash\gamma|,\ m=|\gamma\backslash\eta|,\
  k=|\eta\cap\gamma|.
\end{align*}
The rest of the proof follows similar steps to the proof in
Appendix~\ref{sec:proof-theorem-fkg}.
\end{proof}

\begin{remark}
  The above proof can be suitably modified to give the following
  interesting result.  Let $D_1\subset D_2$ be two compact subsets of
  $\bbR^d$. With an abuse of notation, let $\tpi_{D_2,\ttR}(\eta)$
  denote the marginal-Janossy density of observing $\eta$ in $D_1$,
  under the measure with density $\tpi_{D_2,\ttR}$. Similarly, we
  overload the notation for $\tpi_{D_2,\ttB}$. With this notation, we
  may prove that $\tpi_{D_1,\ttR}\geq\tpi_{D_2,\ttR}$ and
  $\tpi_{D_1,\ttB}\leq\tpi_{D_2,\ttB}$. For the proof, we apply
  Holley's inequality, which requires that the following inequality
  holds:
  \begin{align*}
    \tpi_{D_1,\ttR}(\eta\vee\gamma)\tpi_{D_2,\ttR}(\eta\wedge\gamma)\geq\tpi_{D_1,\ttR}(\eta)\tpi_{D_2,\ttR}(\gamma),
  \end{align*}
  where $\eta,\gamma\in M(D_1,\mbf{C})$ are two valid
  configurations. This is follows from the inequality
  \begin{align*}
    \tpi_{D_1,\ttR}(\eta\vee\gamma)\tpi_{D_2,\ttR}(\xi\cup(\eta\wedge\gamma))\geq\tpi_{D_1,\ttR}(\eta)\tpi_{D_2,\ttR}(\xi\cup\gamma),
  \end{align*}
  where $\xi\in M(D_1\backslash D_2,\mbf{C})$ is any configuration
  such that $\xi\cup\gamma$ is a valid configuration. The later
  inequality can be proved using similar ideas used in the proof of
  Theorem \ref{thm:fkg-same-type} in Appendix
  \ref{sec:proof-theorem-fkg}. We skip the details of the cumbersome
  calculations here. We will only remark here that, such a
  monotonicity of measures allows us to consider the limiting extremal
  measures $\lim_{D_n\nearrow\bbR^d}\tpi_{D_n,\ttR}$ and
  $\lim_{D_n\nearrow\bbR^d}\tpi_{D_n,\ttB}$, on the infinite Euclidean
  domain $\bbR^d$. In the next few sections, we will consider the
  First-in-first-match process on infinite Euclidean domains, and
  prove the existence of a stationary regime. We leave the job of
  exploring of the connection between the stationary measure so
  obtained and these limiting measures to future work.
\end{remark}

\section{First-in-First-Match Matching Process on Euclidean domains}
\label{sec:defi-eucl}
In the following few sections, we extend the definition of the process
previously given on a compact space to a non-compact space. We will
specifically focus on the Euclidean space $\bbR^d$, for some
$d\geq 1$.  The following methodology can be extended to other
non-compact spaces that satisfy certain additional assumptions, but we
refrain from presenting these results in complete generality.  When
$D=\bbR^d$, there are infinitely many arrival and departure events
that are triggered in any finite interval of time.  So, the process
cannot be constructed as a jump Markov process using the algorithm
presented in Section~\ref{sec:form-desc}.

\hypertarget{def:kappa}{}

The key to the definition and construction of the process on $\bbR^d$
is the following viewpoint.  Let us look understand this viewpoint the
compact setting first, and see its relation to the algorithm given in
Section~\ref{sec:form-desc}.  Let $D$ be a compact space.  Let
$\Phi\in M(D\times\bbR^+,\mbf{C}\times\bbR^+)$ and
$\eta_0\in O(D,\mbf{C}\times \bbR^+)$ be the driving Poisson point
process and the initial condition, respectively, as defined in the
Section~\ref{sec:form-desc}.  Here, each particle $x\in \eta_0$ is of
the form $x=(p_x,c_x,w_x)$, where $p_x$, $c_x$ and $w_x$ are the
position, color and patience of the particle.  Similarly, any point
$x\in \Phi$ is of the form $x=(p_x,b_x,c_x,w_x)$, where additionally
$b_x$ denotes the arrival time in $\Phi$.

We will treat $\Phi$ as an element of
$O(D\times\bbR^+,\mbf{C}\times\bbR^+)$, where the points of $\Phi$ are
ordered according to their birth times, as in
Section~\ref{sec:form-desc}. We will also treat $\eta_0$ as an element
of $O(D\times\bbR^+,\mbf{C}\times\bbR^+)$, by setting $b_x=0$ for all
$x\in\eta_0$, while preserving the order in $\eta_0$.  Moreover, we
also consider $\Phi\cup\eta_0$ as an element of
$O(D\times\bbR^+,\mbf{C}\times\bbR^+)$, where all the elements of
$\eta_0$ are ranked less than the elements of $\Phi$, while preserving
the order within these sets.

We define a function
$\kappa:\Phi\cup\eta_0\ra D\times\mbf{C}\times\bbR^+\sqcup
\{\diamondsuit\}$, which we call the \emph{killing} function, that is
created as the process is built by the algorithm in
Section~\ref{sec:form-desc}.  We set $\kappa(x)$ according to the
following exhaustive set of rules.
\begin{enumerate}
\item If $x$ arrives after $y$ and matches with it, then
  $\kappa(x)=y$.
\item If $x$ is accepted into the system, then
  $\kappa(x)=\diamondsuit$.
\item If $x\in\eta_0$, then $\kappa(x)=\diamondsuit$.
\end{enumerate}
According to the description of the process, the function $\kappa$
satisfies the following recursive property.  For any $x\in \Phi$,
\begin{align}\label{eq:kappa}
  \kappa(x)=\min \left\{y\in (\Phi\cup \eta_0):
  \substack{
  y<x,\ c_y\neq c_x,\ d(p_y,p_x)<1,\  \kappa(y)=\diamondsuit,\ b_y+w_y>b_x,\\
  (\forall z,\ y<z<x,\ d(p_y,p_z)<1,\ c_z=c_x,\ \kappa(z)\neq y)
  }\right\},
\end{align}
where the minimum above is set equal to $\diamondsuit$ if the above
set is empty.  In words, the conditions in the definition of the above
set select particles of opposite color that arrive before (or are
ranked lower), are accepted when they arrive, whose patience does not
run out before $x$ arrives, and are not matched to any particle
arriving before $x$.

The above recursive property can serve as a definition of the function
$\kappa$, even in the non-compact case, if we can show that the
recursive definition terminates almost surely.  We note that we could
compute the value of $\kappa(x)$ if we knew all values of $\kappa$ on
points in $\Phi\cup \eta_0$ that are within a spatial distance $2$
from $x$ and that arrive before it.  It is also enough to just know
all the values of $\kappa$ for point in $\Phi$ within a spatial
distance $4$ that arrive before $x$.  The following lemma provides the
tool needed to claim the termination of the recursive definition.

\begin{lemma}
  Suppose $\Phi\in O(\bbR^d\times \bbR^+,\mbf{C}\times\bbR^+)$ be a
  Poisson point process of intensity
  $\ell\otimes\ell\otimes m_c\otimes \mu e^{-\mu x}\ell(dx)$, where
  $\ell$ is the Lebesgue measure on corresponding Euclidean spaces.
  Then, there is no infinite sequence
  $\{y_{n}\}_{n\in\bbN} \subset \Phi$ such that $b_{y_i}>b_{y_{i+1}}$
  and $d(p_{y_i},p_{y_{i+1}})\leq 4$ for all $i>0$.
\end{lemma}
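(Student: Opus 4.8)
The plan is a first-moment (union-bound) argument that exploits the fact that birth times decrease strictly along such a sequence. Call a finite or infinite tuple of points of $\Phi$ a \emph{$4$-chain} if any two consecutive entries $y_i,y_{i+1}$ satisfy $b_{y_i}>b_{y_{i+1}}$ and $d(p_{y_i},p_{y_{i+1}})\le 4$; note that the colour and patience marks do not enter this notion (they matter for the recursion~\eqref{eq:kappa}, but the coarser $4$-chains are exactly what governs its dependency structure). First I would reduce the claim to countably many events. Given an infinite $4$-chain $(y_i)_{i\ge 1}$, let $z\in\bbZ^d$ be the lattice cell with $p_{y_1}\in z+[0,1)^d$ and put $T:=\lceil b_{y_1}\rceil\in\bbN$; since the $b_{y_i}$ are strictly decreasing, every $y_i$ has $b_{y_i}\in[0,T]$. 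Hence
\[
  \{\exists\ \text{an infinite }4\text{-chain}\}\ \subseteq\ \bigcup_{z\in\bbZ^d}\ \bigcup_{T\in\bbN} A_{z,T},
\]
where $A_{z,T}$ is the event that some infinite $4$-chain has its first point in $z+[0,1)^d$ and all of its birth times in $[0,T]$. By spatial translation invariance of $\Phi$ (its spatial intensity is Lebesgue measure), $\PP(A_{z,T})$ does not depend on $z$, so it suffices to prove $\PP(A_{0,T})=0$ for every fixed $T\in\bbN$; write $Q_0:=[0,1)^d$.

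For this, fix $L\in\bbN$ and let $M_L$ be the number of ordered $L$-tuples of points of $\Phi$ (necessarily distinct, since birth times strictly decrease) that form a $4$-chain with first point in $Q_0$ and all birth times in $[0,T]$. On $A_{0,T}$ every length-$L$ initial segment of the infinite chain is counted, so $M_L\ge 1$, whence $\PP(A_{0,T})\le\PP(M_L\ge1)\le\EE[M_L]$. Next I would compute $\EE[M_L]$ by the multivariate Mecke equation: it is the integral of the $4$-chain indicator against $L$ copies of the intensity $\ell\otimes\ell\otimes m_c\otimes\mu e^{-\mu w}\,\ell(dw)$. The chain conditions leave the colour and patience coordinates free, so the colour integrals give $2^L$ and the patience integrals give $1$; and the spatial and temporal integrals separate. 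The spatial integral is
\[
  \int_{Q_0}dp_1\int_{B(p_1,4)}dp_2\cdots\int_{B(p_{L-1},4)}dp_L\;=\;V^{L-1},\qquad V:=\lvert B(0,4)\rvert,
\]
and the temporal integral is the volume of the simplex $\{0\le b_L<b_{L-1}<\cdots<b_1\le T\}$, namely $T^L/L!$. Thus $\EE[M_L]=2^L V^{L-1} T^L/L! = V^{-1}(2VT)^L/L!$.

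Since $(2VT)^L/L!\to 0$ as $L\to\infty$, letting $L\to\infty$ in $\PP(A_{0,T})\le\EE[M_L]$ gives $\PP(A_{0,T})=0$, and summing over the countably many pairs $(z,T)$ yields $\PP\big(\exists\ \text{an infinite }4\text{-chain}\big)=0$, which is precisely the assertion. I do not expect a genuine obstacle here: once the reduction to a fixed starting cell and a bounded time horizon $T$ is set up, the estimate is a one-line Mecke computation. The only delicate point — and the one place where the hypotheses are really used — is the finiteness of the temporal integral, which hinges on birth times being bounded below (the intensity lives on $\bbR^d\times\bbR^+$); over a two-sided time axis the relevant simplex has infinite volume and this first-moment bound breaks down.
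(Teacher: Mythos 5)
Your argument is correct, but it is a genuinely different proof from the one in the paper. The paper localizes in \emph{time}: since the birth times along such a sequence are strictly decreasing and bounded below by $0$, they converge, so the tail of any infinite chain would be contained in a time slab $\{y:\,q\le b_y\le q+\epsilon\}$ for some rational $q$; for $\epsilon$ small enough the points of $\Phi$ in such a slab form a subcritical random geometric graph with connection radius $4$ (Theorem 3.2 of Meester--Roy), which almost surely has no infinite component, and the infinite chain would be such a component -- a contradiction. You instead run a first-moment computation: by the multivariate Mecke formula the expected number of length-$L$ chains with first point in a fixed unit cell and birth times in $[0,T]$ is $2^L V^{L-1}T^L/L!$, where the factorial comes from the ordered time simplex, and this tends to $0$, so after a countable union over cells and integer time horizons no infinite chain exists a.s. (note that on the event $A_{z,T}$ you only need $A_{z,T}\subseteq\{M_L\ge 1\}$, so the measurability of $A_{z,T}$ itself is never an issue). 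What the two approaches buy: yours is elementary and self-contained -- no continuum-percolation input, no tuning of a slab width $\epsilon$ against a percolation threshold, and the mechanism (factorial decay of the simplex volume against geometric growth of $(2V T)^L$) is completely explicit; the paper's argument is softer, imports the subcritical phase of the Boolean model, and in exchange yields the slightly stronger geometric picture that within a sufficiently thin time slab all radius-$4$ clusters are a.s.\ finite. Both proofs hinge on the same essential hypothesis, which you correctly isolate: the time axis is $\bbR^+$, so birth times are bounded below; on a two-sided axis the paper's limit $\lim_i b_{y_i}$ need not exist and your time integral becomes an infinite simplex, and indeed the statement would fail there.
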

\begin{proof}
  Let $\epsilon>0$ be fixed.  Consider the event
  $T_\epsilon:=\cup_{q\in\bbQ^+}T_{\epsilon,q}$, where
  $T_{\epsilon,q}$, $q\in\bbQ^{+}$, is the event that the Random
  Geometric graph (see \cite{meester1996continuum}) obtained by
  joining any two points in $\{y\in \Phi: b\leq b_y\leq b+\epsilon\}$
  whose positions are within a distance $4$ from each other does not
  percolate.  From Theorem 3.2 of \cite{meester1996continuum}, we know
  that $\PP(T_{\epsilon,b})=1$ for every $b\in \bbQ^+$ if $\epsilon$
  is small enough.  Hence, $\PP(T_\epsilon)=1$ for small enough
  $\epsilon$.  This precludes the presence of an infinite sequence
  $y_1,y_2\ldots\in \Phi$ such that $b_{y_i}>b_{y_{i+1}}$ and
  $d(p_{y_{i}},p_{y_{i+1}})\leq 4$, for all $i\in\bbN$.  Indeed, if
  such a sequence exists then the limit $b=\lim_{i\ra\infty}b_{y_i}$
  exists and by density of $\bbQ^{+}$ in $\bbR^{+}$, this event
  belongs to the set $T_{\epsilon}^{c}$.
\end{proof}

The above lemma ensures that we can obtain the value of $\kappa(x)$
for any $x\in\Phi$ by recursively applying the
Equation~\ref{eq:kappa}.  The process in turn can be defined by
setting
\begin{align*}
  \eta_t=\{(p_y,c_y):y\in \Phi\cup\eta_0,\ \kappa(y)=\diamondsuit,\ b_y\leq t< b_y+w_y,
  \textrm{ and } \kappa(z)=y\implies t<b_{z}\},
\end{align*}
for all $t>0$. On the unbounded domain $\bbR^d$, this will serve as
the definition of the FIFM spatial matching process.

\subsection{Construction of Stationary Regime on  Euclidean domains}
\label{sec:stat-eucl}
The simple coupling form the past argument presented in
Section~\ref{sec:form-desc} does not hold in the case where
$\lambda(D)=\infty$, since we cannot find a sequence of regeneration
times going to $\-\infty$ in this case.  We can show however that a
coupling from the past argument can still be performed locally in
space.  This is done by first showing that, for a compact subset $C$
of the domain $\bbR^d$, there exists time $T_C$ beyond which two
simulations agree for all times $t$ beyond time $T_C$ (So, $T_C$ is
not a stopping time).  A key ingredient in proving the existence of
$T_C$ is an analysis of the decay in first order moment measures of
the discrepancies between the two point patterns in simulation.  Using
this analysis, we are also able to bound the moments of $T_C$, which
enables the application of ergodic theorems, as applied in the simple
coupling from the past construction in Section~\ref{sec:form-desc}.
In the following sections, we present this coupling from the past
argument in detail.

\subsection{A Coupling of Two Processes}
\label{sec:couplingTwoProcs}{}\hypertarget{def:specials}{}
We will first obtain some results about a coupling of two different
processes, $\{\eta_t^1\}_{t\geq 0}$ and $\{\eta_t^2\}_{t\geq 0}$,
starting from the two different initial conditions, but driven by the
same driving process
$\Phi\in O(\bbR^d\times\bbR,\mbf{C}\times\bbR^+)$.  Let $\eta^1_0$ and
$\eta^2_0$ be the two valid initial conditions
($\eta^i_0\cap N(\eta^i_0)=\emptyset$).  At any time $t\geq 0$, there
are some particles that are present in both processes.  These
particles will be called \emph{Regular} particles, and denoted by$R_t$.
Call those particles that are present in $\eta^1_t$ and absent in
$\eta^2_t$ as \emph{Zombies}, and those that are absent in $\eta^1_t$
and present in $\eta^2_t$ as \emph{Antizombies}.  We denote them by
$Z_t$ and $A_t$ respectively.  Further, call particles in
$Z_t\cup A_t$ as \emph{Special} particles, and denote them by $S_t$.

We now prove that the density of the special particles
decays exponentially to zero.

\begin{theorem}\label{thm:exponentialDecay}
  There exist constants $c>0$, $\beta_{S_t}<\beta_{S_0} e^{-ct}$, for
  all $t>0$, where $\beta_{S_t}$ is the intensity of the special
  points, $S_t$.
\end{theorem}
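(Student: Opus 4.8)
The plan is to track the first-order moment measure (intensity) of the special points $S_t = Z_t \cup A_t$ and show it satisfies a differential inequality of the form $\frac{d}{dt}\beta_{S_t} \le -c\,\beta_{S_t}$ for some constant $c>0$, from which the claimed bound $\beta_{S_t} < \beta_{S_0} e^{-ct}$ follows by Grönwall. The mechanism driving the decay is that special particles are created only through interactions of an \emph{existing} special particle with the common arrival stream or with another special particle, whereas they are destroyed (i) at rate $\mu$ through loss of patience, and (ii) at a positive rate whenever a new arrival in $\Phi$ falls in a region where it would match to a special particle in one process but not in the other — thereby ``healing'' the discrepancy. First I would set up a bookkeeping lemma: any event in the common driving process $\Phi$ either leaves the set of special points unchanged, or changes it in one of finitely many ways (a regular particle of one color becomes special; a special particle is annihilated; a matched pair involving a special particle is resolved), and I would write the corresponding balance equation for $\beta_{S_t}$ by the mass-transport / Campbell-type argument for the evolving point process.

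The key technical point is to show that the \emph{creation} rate of new special points is controlled by $\beta_{S_t}$ times a bounded kernel with finite range (range $2$ in space, by the locality observations made for $\kappa$), while the \emph{destruction} rate is at least $(\mu + \delta)\beta_{S_t}$ for some $\delta>0$ coming from the matching events. Concretely, a new arrival at $x$ can turn a regular point into a special point only if $x$ lies within distance $2$ of an existing special point (since only then can the match assignments in the two processes differ), so the creation intensity at time $t$ is bounded above by $C_1 \beta_{S_t}$ where $C_1$ depends on $d$, $\lambda$ (here the Lebesgue intensity) and the volume of a ball of radius $2$. For destruction: every special point has patience expiring at rate $\mu$, contributing $-\mu\beta_{S_t}$; additionally, a zombie in $\eta^1_t$ is removed precisely when an arrival matches to it in process $1$, and the set of such arrival locations has positive $\blambda$-measure bounded below uniformly (it contains at least the ``region of highest priority'' $W_{\cdot}$ of that point, which one argues has measure bounded below — or more carefully, one shows the net annihilation-minus-recreation is strictly negative). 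Putting these together gives $\frac{d}{dt}\beta_{S_t} \le (C_1 - \mu - \delta)\beta_{S_t}$; one then has to check that the constant is actually negative, which is \emph{not} automatic — if $\lambda$ is large, $C_1$ can dominate $\mu$.

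Here is where I expect the main obstacle, and I think the resolution is that $C_1$ is not the right bound for the \emph{net} rate: creation of a new special point must be paid for by an interaction that simultaneously tends to resolve existing discrepancies, so the correct accounting is to bound the creation rate not by $C_1\beta_{S_t}$ but by a quantity that is itself $o(\beta_{S_t})$ in the relevant regime, or to use a weighted moment / a comparison with a subcritical branching-type process in which each special point spawns, during its lifetime of mean $1/\mu$, a number of offspring whose expectation is strictly less than $1$. The cleanest route is probably: (1) show each special point, while alive, creates new special points at total rate at most $\Lambda$ for an explicit $\Lambda$; (2) show its lifetime is $\mathrm{Exp}(\mu')$ for some $\mu'\ge\mu$; (3) if the model parameters are such that $\Lambda/\mu' < 1$ — which, examining the geometry, should hold because each ``created'' special point is geometrically tied to the annihilation of its parent's discrepancy, making the effective offspring count less than one regardless of $\lambda$ — then the expected total progeny is finite and, by the standard subcritical-branching estimate, $\beta_{S_t}$ decays exponentially. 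I would carry out steps (1)–(2) by direct examination of the transition rules for $\kappa$ under the coupling, isolate the geometric reason the offspring count is subcritical as the heart of the proof, and then invoke the elementary exponential-decay estimate for subcritical age-dependent branching to conclude, reading off $c$ and the prefactor $\beta_{S_0}$.
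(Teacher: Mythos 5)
Your high-level plan (a differential inequality for the intensity of special points followed by Gr\"onwall) is the same as the paper's, and you have even named the correct mechanism in passing, but the decisive step is left as a conjecture rather than proved, and the concrete scheme you propose in its place does not work as stated. Your steps (1)--(3) — each special particle spawns offspring at total rate at most $\Lambda$ over an $\mathrm{Exp}(\mu')$ lifetime, and one concludes if $\Lambda/\mu'<1$ — give a negative constant only for small $\lambda$, exactly the obstacle you yourself flag; the fix you gesture at (``each created special point is geometrically tied to the annihilation of its parent's discrepancy, making the effective offspring count less than one regardless of $\lambda$'') is precisely the assertion that requires proof, and it is not a soft geometric fact but a case analysis of the coupled dynamics. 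The paper carries out exactly this analysis: working on a compact window $K$, it enumerates all transitions affecting $Z_t$ and $A_t$ (a zombie loses patience; a zombie is matched by an arrival accepted in the other process, creating an antizombie; an arrival is matched to a zombie in one process and to a regular particle in the other, converting that regular particle into a zombie; an arrival is matched to a zombie and an antizombie simultaneously, annihilating both; etc.), and observes that upon adding the balance equations for $Z$ and $A$ every bulk creation term cancels against a simultaneous destruction term. What survives are only the creations whose paired destruction happens outside $K$, and these are bounded by $4\blambda(\partial K^+_{\mbf{C}}\cup\partial K^-_{\mbf{C}})$, a boundary term; letting $K\nearrow\bbR^d$ and using spatial ergodicity kills it, leaving $\frac{d}{dt}\beta_{S_t}\le-\mu\beta_{S_t}$, so the decay rate is simply $c=\mu$ and no subcriticality condition on $\lambda$ ever enters.

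So the genuine gap is twofold. First, the heart of the argument — that every transition which creates a special particle simultaneously destroys one, so that in the bulk the net creation rate is exactly zero and the strict decay comes solely from patience at rate $\mu$ — is only hypothesized in your proposal; without verifying it by the transition-by-transition bookkeeping, your branching comparison has supercritical offspring mean for large $\lambda$ and the proof collapses. (If you do verify it, the correct branching picture is that each special particle produces at most one offspring, and only at the instant of its own removal by a matching event, so each discrepancy lineage is killed at rate at least $\mu$; that is equivalent to the paper's cancellation, not an alternative to it.) Second, your sketch differentiates the intensity $\beta_{S_t}$ directly and never addresses how to justify this: the paper does so by first writing the moment balance on compact windows, where the non-cancelling interactions appear as boundary terms of order $\blambda(\partial K^\pm_{\mbf{C}})$, and only then passing to the intensity via $K\nearrow\bbR^d$ and spatial ergodicity; some such finite-window argument (and control of the dependence between lineages, if you insist on the branching domination) is needed to make your outline rigorous.
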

\begin{proof}
  Let $K\subset \bbR^d$ be compact.  Define $K^+=\{y\in \bbR^d:d(y,K)\leq 1\}$
  and $K^-=\{y\in \bbR^d: d(y,K^c)\leq 1\}^c$, $\partial K^+=K^+-K$
  and $\partial K^-=K-K^-$.  Also, let for any $T\subset \bbR^d$,
  $T_{\mbf{C}}$ denote the set $T\times \mbf{C}$.  Now, we will
  compute the difference
  $\EE [Z_{t+\delta}(K_{\mbf{C}})-Z_{t}(K_{\mbf{C}})]$ for small
  $\delta>0$, by tracking the changes that may occur in the short time
  interval $(t,t+\delta)$.  Recall that we use the notation $W^i_x$ to
  denote the domain of influence of the particle $x\in\eta^i_t$, $i=1,2$.
  Also, in the following we have $\blambda:=\ell\otimes m_c$ on
  $\bbR^d\times\mbf{C}$.  The following possibilities may occur:
  \begin{itemize}
  \item A zombie in $ K$ exits on its own by losing patience.  The
    expected difference is
    \begin{align}\label{eq:list1}
      -\mu\delta\EE Z_{t}(K_{\mbf{C}})+o(\delta).
    \end{align}
  \item With probability $o(\delta)$, two or more particles arrive or
    depart in $K^+$.  The expected change in $Z_t(K)$, given that this
    occurs, is $o(\delta)$.
  \item A zombie in $K$ matches with a particle arriving in $K^c$,
    which is accepted in the process $\eta_t^2$.  This results in a
    difference of
    \begin{align}\label{eq:list2}
      -\delta\EE\sum_{x\in Z_t\cap K_{\mbf{C}}}\blambda(W^1_x\cap K^c_{\mbf{C}}\cap (N(\eta_t^2))^c)+o(\delta).
    \end{align}
  \item A zombie in $K$ matches with a particle arriving in $K$, which is
    accepted in the process $\eta_t^2$.  This new particle is an
    antizombie.  The resulting change is
    \begin{align}\label{eq:list3}
      -\delta\EE\sum_{x\in Z_t\cap K_{\mbf{C}}}\blambda(W_x^1\cap K_{\mbf{C}}\cap (N(\eta_t^{2}))^c)+o(\delta).
    \end{align}
  \item A zombie in $K$ matches with an arriving particle, which also
    matches with some particle in $K^c$ in the complementary process.
    This results in an expected change of
    \begin{align}\label{eq:list4}
      -\delta\EE\sum_{x\in Z_t\cap K_{\mbf{C}}}\sum_{y\in\eta_t^{2}\cap K^c_{\mbf{C}}}\blambda(W^1_x\cap W^2_y)+o(\delta).
    \end{align}
  \item A zombie in $K$ matches with an arriving particle, which also
    matches with some anti-zombie in $K$.  This results in an expected
    change of
    \begin{align}\label{eq:list5}
      -\delta\EE\sum_{x\in Z_t\cap K_{\mbf{C}}}\sum_{y\in A_t\cap
      K_{\mbf{C}}}\blambda(W^1_x\cap W_y^2)+o(\delta).
    \end{align}
  \item An anti-zombie matches with a particle arriving in $K$, that is
    accepted in the complementary process.  This particle becomes a
    zombie.  This results in an expected change of
    \begin{align}\label{eq:list6}
      \delta\EE\sum_{x\in A_t}\blambda(W^2_x\cap K_{\mbf{C}}\cap(N(\eta_t^1))^c)+o(\delta).
    \end{align}
  \item An arriving particle matches with a zombie in $K^c$ and a regular
    particle in $\eta_t^2\cap K_{\mbf{C}}$.  The regular particle turns into
    a zombie.  This results in a change of
    \begin{align}\label{eq:list7}
      \delta\EE\sum_{\substack{x\in
      R_t\cap K_{\mbf{C}}\\y\in Z_t\cap K^c_{\mbf{C}}}}\blambda(W^2_x\cap W_y^1)+o(\delta)
    \end{align}
  \end{itemize}
  Hence, we have:
  \begin{align*}
    & \EE[Z_{t+\delta}(K_{\mbf{C}}))-Z_{t}(K_{\mbf{C}})]\\
    &=-\mu\delta\EE Z_t(K_{\mbf{C}}) +\delta\EE\left[-\sum_{x\in
      Z_t\cap K_{\mbf{C}}}\bigg(\blambda(W^1_x\cap
      K^c_{\mbf{C}}\cap(N(\eta_t^2)))\right.\\
    &\left.\IndII-\blambda(W_x^1\cap K_{\mbf{C}}\cap (N(\eta_t^2))^c -\sum_{y\in\eta_t^{2}\cap K^c_{\mbf{C}}}\blambda(W^1_x\cap W^2_y)-\sum_{y\in A_t\cap K_{\mbf{C}}}\blambda(W^1_x\cap W_y^2)\bigg)\right.\\
    &\IndI\left.+\sum_{x\in A_t}\blambda(W^2_x\cap
      K_{\mbf{C}}\cap(N(\eta_t^1))^c)+\sum_{\substack{x\in
      R_t\cap K_{\mbf{C}}\\y\in Z_t\cap K^c_{\mbf{C}}}}\lambda(W^2_x\cap
    W_y^1)\right]+o(\delta).
  \end{align*}
  Taking only the 2nd, 5th and 6th terms of the square braces of the
  above expression, dividing by $\delta$, and taking the limit as
  $\delta\ra 0$, we obtain:
  \begin{align*}
    \frac{d\EE Z_t(K_{\mbf{C}})}{dt}\leq
    &-\mu\EE Z_t(K_{\mbf{C}})+\EE\left[-\sum_{x\in Z_t\cap K_{\mbf{C}}}-\lambda(W_x^1\cap K_{\mbf{C}}\cap (N(\eta_t^2))^c\right.\\
    &\IndI\left.+\sum_{x\in A_t}\lambda(W^2_x\cap K_{\mbf{C}}\cap(N(\eta_t^1))^c)+\sum_{\substack{x\in
      R_t\cap T_{\mbf{C}}\\y\in Z_t\cap T^c_{\mbf{C}}}}\lambda(W^2_x\cap W_y^1)\right].
  \end{align*}
  We have similar bounds for derivatives of $\EE{A_t(K_{\mbf{C}})}$.
  Adding these expressions we obtain:
  \begin{align}
    \frac{d}{dt}\EE{S_t(K_{\mbf{C}})}
    &\leq-\mu\EE{S_t(K_{\mbf{C}})}+\EE\left[2\sum_{\substack{x\in
      R_t\cap K_{\mbf{C}}\\y\in Z_t\cap K^c_{\mbf{C}}}}\blambda(W^2_x\cap
    W_y^1)+2\sum_{\substack{x\in R_t\cap K_{\mbf{C}}\\y\in A_t\cap K^c_{\mbf{C}}}}\blambda(W^2_x\cap W_y^1)\right]\nonumber\\
    &\leq -\mu\EE S_t(K_{\mbf{C}})+4\blambda(\partial K_{\mbf{C}}^+\cup\partial K^-_{\mbf{C}})\nonumber.
  \end{align}
  Taking the limit $K\nearrow\bbR^d$, by spatial ergodicity of the
  process $S_t$, we obtain
  \begin{align}
    \frac{d}{dt}\beta_{S_t}\leq -\mu\beta_{S_t},
  \end{align}
  from which we can conclude that
  $\beta_{S_t}\leq \beta_{S_0}e^{-\mu t}$.
\end{proof}

\subsection{The Coupling from the Past Construction}
\label{sec:CFTP}
In this section, we present the coupling from the past construction of
the stationary regime.  Let $\Phi$ be a doubly infinite Poisson point
process, as in Lemma~\ref{lem:boundedRegeneration}.  That is, $\Phi$ is a Poisson point process defined
on $\bbR^d\times \bbR$, with i.i.d.  marks in $\mbf{C}\times\bbR^+$.
The intensity of the point process is $2\ell\otimes \ell$ and the
marks are independent of each other, with the color uniformly
distributed in $\mbf{C}$ and the patience an exponential random
variable.  Let $\{\theta_t\}_{t\in\bbR}$ be a sequence of time-shift
operators such that
$ \Phi\circ \theta_t(L\times A)=\Phi(L\times (A-t))$.  Let
$\{\eta^T_{t}\}_{t\geq -T}$, $T\in\bbN$, be a sequence of processes
starting at time $-T$ with empty initial conditions and driven by
arrivals from $\Phi$.  We have
$\eta^T_t=\eta^0_{t+T}\circ\theta_{-T}$.

The processes $\eta^1_t$ and $\eta^0_t$ are driven by the same Poisson
point process $\Phi$ beyond time $0$.  Treating the particles in
$\eta^1_0$ as the initial conditions, we have a coupling of
$\{\eta^1_t\}_{t\geq 0}$ and $\{\eta^0_t\}_{t\geq 0}$ as in
Section~\ref{sec:couplingTwoProcs}.  The discrepancies on any bounded
set goes to zero exponentially fast by
Theorem~\ref{thm:exponentialDecay}.  In the following lemma, we show
that such an exponential rate of convergence is enough to show that
the time after which discrepancies never appear in any compact region
has finite expectation.  For any compact $K\subset D$, define
\begin{align}
  \tau^0(K):=\inf\{t>0:\eta_{s}^1|_K=\eta^0_s|_K,\ s\geq t\},
\end{align}
and in the following, let $S_t$ denote the set of discrepancies,
$\eta^0_t\triangle\eta^1_t$.  Note that $\tau^0(K)$ is not a stopping
time in our setting, since, first, $S_t\neq \emptyset$ for all
$t\geq 0$ a.s.  (there are always discrepancies somewhere in $\bbR^d$)
by spatial ergodicity, and second, once discrepancies vanish in $K$,
they can reappear due to interactions with the particles from outside
of $K$.

We have
\begin{lemma}\label{lem:coupling-time-bound}
  For all compact $K\subset\bbR^d$, $\EE\tau^0(K)<\infty$.
\end{lemma}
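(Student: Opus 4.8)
The plan is to exploit the exponential decay of the intensity of discrepancies established in Theorem~\ref{thm:exponentialDecay}, but the subtlety is exactly the one flagged in the remark preceding the lemma: $\tau^0(K)$ is not a stopping time, because discrepancies can vanish on $K$ at some time and then reappear later owing to interactions propagating in from outside $K$. So one cannot simply say ``once $S_t\cap K=\emptyset$ we are done.'' Instead I would show that if there are no discrepancies in a \emph{sufficiently large} enlargement of $K$, and if no discrepancy ever migrates into that enlargement, then $K$ stays clean forever; and then I would bound the waiting time for such a favourable configuration to occur and persist.

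First I would quantify finite speed of propagation of discrepancies. Since the interaction range is $1$, a discrepancy that enters a region $C$ in the interval $(t,t+s)$ must have been generated by an arrival within distance $2$ of an existing discrepancy; iterating, any discrepancy appearing in $K$ during $(t,t+s)$ traces back through a chain of arrivals each within distance $2$ of the previous, with total time span $\le s$. By the random-geometric-graph non-percolation argument already used in the lemma on termination of $\kappa$ (Theorem~3.2 of \cite{meester1996continuum}), for $s$ small enough such chains have bounded spatial extent almost surely, and in fact the probability that a chain of arrivals in a fixed time-window of length $s$ reaches distance $r$ from a given point decays (super-)exponentially in $r$. Fix $s_0$ small enough that this holds, and let $K' \supset K$ be a closed $R$-neighbourhood of $K$ with $R$ to be chosen. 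Then on the event that $S_t \cap K' = \emptyset$, the probability that a discrepancy appears in $K$ before time $t+s_0$ is at most $C e^{-c' R}$ uniformly in $t$, by a union bound over the (bounded, by compactness and stationarity) set of ``sources'' in $K'^c$ weighted by chain-reach probabilities.

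Next I would control $\EE\inf\{t: S_t\cap K' = \emptyset\}$. By Markov's inequality and Theorem~\ref{thm:exponentialDecay}, $\PP(S_t\cap K' \neq \emptyset) \le \EE S_t(K'_{\mathbf C}) = \blambda(K'_{\mathbf C})\beta_{S_t} \le \blambda(K'_{\mathbf C})\beta_{S_0} e^{-\mu t}$, which is summable over $t\in\bbN$; hence the first integer time at which $S_t\cap K'=\emptyset$ has finite expectation. Now I would run the following renewal-type argument. Discretize time into blocks of length $s_0$. Say a block $[ns_0,(n+1)s_0]$ is \emph{good} if $S_{ns_0}\cap K' = \emptyset$ \emph{and} no discrepancy reaches $K$ during that block. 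By the two estimates above, choosing $R$ large we can make the conditional probability that a block is good, given $S_{ns_0}\cap K'=\emptyset$, at least $1/2$ (say). Moreover, once we have an \emph{infinite} run of consecutive good blocks starting at some time, $K$ is clean from that point on, so $\tau^0(K)$ is finite and bounded by the start of that run. Thus $\tau^0(K)$ is dominated by the time of the last block in which either $S_{ns_0}\cap K'\neq\emptyset$ or, conditionally on $S_{ns_0}\cap K'=\emptyset$, the ``bad'' event of probability $\le 1/2$ occurs. Using that the first of these has exponentially decaying probability in $n$ (from the decay of $\beta_{S_t}$, together with the observation that if $S\cap K'$ becomes empty it tends to stay empty on a set of times of positive density), and the second is a Bernoulli-$(\le 1/2)$ event at each step, a Borel--Cantelli plus geometric-series computation gives $\EE\tau^0(K) < \infty$.

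The main obstacle is making rigorous the claim that $\PP(S_{ns_0}\cap K'=\emptyset)$ does not merely have a summable tail but that these events, or the ``last bad time,'' can be bootstrapped into a finite-expectation bound for $\tau^0(K)$ despite the lack of the stopping-time property. The cleanest way I see is the block argument above: rather than waiting for $S\cap K'$ to be empty \emph{and stay empty} (which is a delicate event about the whole future), one only ever needs $S_{ns_0}\cap K'=\emptyset$ at infinitely many $n$ (guaranteed by the summable tail and Borel--Cantelli on the complement) together with, at the right one of those $n$, an infinite run of the cheap good-block events. The finite-speed-of-propagation estimate is what decouples ``clean on $K'$ now'' from ``clean on $K$ a little later,'' and it is the technical heart; everything else is a standard geometric/renewal estimate that I would not grind through here.
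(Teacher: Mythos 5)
Your overall strategy (blocks, an enlarged window $K'$, finite speed of propagation of discrepancies) is genuinely different from the paper's, but as written it has a gap at the decisive step. Your bound on the ``re-entry'' event --- a discrepancy reaching $K$ during a block of length $s_0$ given $S_{ns_0}\cap K'=\emptyset$ --- is $C e^{-c'R}$, \emph{uniform in $n$}. A per-block failure probability that is small but bounded away from zero cannot produce a finite ``last bad block'': nothing in your argument prevents these events from occurring at infinitely many $n$, and a Bernoulli-$(\le 1/2)$ (or Bernoulli-$(Ce^{-c'R})$) event at each step combined with Borel--Cantelli and a geometric series does not yield $\EE\tau^0(K)<\infty$; likewise ``once we have an infinite run of consecutive good blocks'' is not something your estimates deliver, since an infinite run of events each of conditional probability only $\ge 1/2$ has probability zero under any reasonable decoupling. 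The missing ingredient is that the re-entry probability must be made to decay \emph{in $n$}, not just in $R$: a chain of arrivals reaching $K$ within a block must start at an actual special particle present at time $ns_0$, whose intensity is $\beta_{S_{ns_0}}\le\beta_{S_0}e^{-\mu n s_0}$ by Theorem~\ref{thm:exponentialDecay}; integrating (density of sources at distance $r$) against (probability that an arrival chain spans distance $r$ in time $s_0$) gives a bound of order $e^{-\mu n s_0}$, which is summable and has summable $n$-weighted sum, and only then do Borel--Cantelli and the expectation bound go through. Note also that this chain estimate needs exponential (or at least integrable against $r^{d-1}$) decay of the subcritical cluster reach, which is strictly more than the non-percolation statement you cite; your ``union bound over the (bounded) set of sources in $K'^c$'' is not correct as stated, since that set is unbounded and must be handled by exactly this integration.

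For contrast, the paper's proof avoids all of this. It decomposes $S_t(K)=S_0(K)+S^+(0,t]-S^-(0,t]$ into birth and death counting processes, bounds the birth rate by $\ell(B(0,1))S_t(K)$, and uses the exponential decay of $\EE S_t(K)$ twice: once to get $\EE S^+[0,\infty)<\infty$ (hence $S_t(K)\to 0$ a.s.), and once to bound $\EE\tau^0(K)\le\EE\int_0^\infty t\,S^-(dt)\le \ell(B(0,1))\int_0^\infty t\,\EE S_t(K)\,dt+\int_0^\infty \EE S_t(K)\,dt<\infty$. The non-stopping-time issue is handled simply by dominating $\tau^0(K)$ by the sum of the times of \emph{all} special-departure events in $K$, so no spatial enlargement, discretization, or percolation input is needed. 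If you want to salvage your route, replace the uniform-in-$t$ bound $Ce^{-c'R}$ by the source-weighted bound above; but the paper's first-moment bookkeeping is both shorter and requires weaker external inputs.
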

\begin{proof}
  We view $S_t(K)$, $t\geq 0$, as a birth-death process.  Let
  $S_t(K)=S_0(K) +S^+(0,t] -S^-(0,t]$, where $S^+$ and $S^-$ are
  simple counting processes.  Since new special particles only result
  from interaction of arriving particles with existing special particles,
  the rate of increase in $S^+$ is bounded above by
  $\sum_{x\in S_t\cap K}\ell(B(x,1))=\ell(B(0,1))S_t(K)$.  Hence,
  \begin{align*}
    \EE S^+[0,\infty)&=\leq\ell(B(0,1))\int_0^\infty\EE S_t(K)dt <\infty.
  \end{align*}
  Since total departures are less than total arrivals,
  $\EE S^-[0,\infty)\leq \EE S_0(K)+ \EE S^+[0,\infty)$.  This in
  particular shows that $S^+[0,\infty)$ and $S^-[0,\infty)$ exist and
  are finite a.s.  Thus, $\lim_{t\ra \infty}S_t(K)$ also exists and is
  finite a.s.  By dominated convergence theorem,
  $\lim_{t\ra\infty}\EE S_t(K)=\EE\lim_{t\ra\infty}S_t(K)$.  Thus, by
  Theorem~\ref{thm:exponentialDecay}, $\lim_{t\ra\infty}S_t(K)=0$, a.s.
  This shows that $\tau(K)<\infty$ a.s.

  Further, we have:
  \begin{align*}
    \EE \tau(K)&\leq\EE\int_0^\infty tS^-(dt)\\
               &=\EE\int_0^\infty tS^+(dt)-\EE\int_0^\infty tS(dt)\\
               &=\EE\int_0^\infty tS^+(dt)+E\int_0^\infty S(t)dt\\
               &\leq \ell(B(0,1))\int_0^\infty
                 t\EE S_t(K)dt+\int_0^\infty\EE S_t(K)dt\\
               &<\infty.
  \end{align*}
\end{proof}
Now, let $\tau^T(K)$ be defined as
\begin{align*}
  \tau^T(K):=\inf\{t>-T:\eta_{s}^{T+1}|_K=\eta^T_s|_K,\ s\geq t\}.
\end{align*}
$\tau^T(K)$ denotes the time at which executions of processes
$\{\eta^T_t\}$ and $\{\eta^{T+1}_t\}$ coincide inside the set $K$.  We
have
\begin{align*}
  \tau^T(K)=\tau^0(K)\circ\theta_{-T}-T.
\end{align*}
That is,
\begin{align} \label{eq:ergodicVty}
  \tau^T(K)+T=\tau^0(K)\circ\theta_{-T}.
\end{align}
Therefore, the sequence $\tau^T(K)+T$ is a stationary and ergodic
sequence.  By Birkhoff's point-wise ergodic theorem and by
Lemma~\ref{lem:coupling-time-bound},
\begin{align*}
  \lim_{T\ra\infty}\sum_{i=0}^T\frac{\tau^i(K)+i}{T}&=\EE\tau^0(K)<\infty,\
                                                      \mathrm{a.s.}
\end{align*}
Therefore the last term in the summation, $\frac{V^T_y+T}{T}$ goes to
$0$ as $T\ra\infty$.  From this we conclude that
\begin{align}
  \lim_{T\ra\infty}V^T_y=-\infty.
\end{align}

This result has the following implication.  For every realization of
$\Phi$, any compact set $K$ and $t\in \bbR$, there exists a $k\in\bbN$
such that for all $T>k$, $\tau^0(K)\circ \theta_{-T}-T<t$.  That is,
the execution of all processes $\{\eta^T_s\}$, $T>k$, coincides at
time $t$ on the compact set $K$.  Then, locally in the total variation
sense, the following limit is well-defined a.s.  on the same
probability space:
\begin{align}
  \label{eq:limitingprocess}
  \eta_t:=\lim_{T\ra\infty}\eta^T_t.
\end{align}

The process $\eta$ is $\{\theta_n\}_{n\in\bbZ}$ compatible since
\begin{align*}
  \eta_t\circ\theta_{1}&=\lim_{T\ra\infty}\eta^T_t\circ \theta_1\\
                       &=\lim_{T\ra\infty}\eta^0_{T+t}\circ\theta_{-T+1}\\
                       &=\lim_{T\ra\infty}\eta^0_{t+1+T-1}\circ\theta_{-T+1}\\
                       &=\eta_{t+1}.
\end{align*}
Further, the process can also be shown to be $\{\theta_s\}_{s\in\bbR}$
compatible.  Indeed, fix $s\in \bbR$.  Let us implement a similar
coupling from the past procedure, but with processes $\eta^{T+s}$ that
start with empty initial conditions at time $-T-s$, $T\in \bbN$.  If
$\hat\eta$ is the process obtained in such as manner, it can be shown
that $\{\hat\eta_t\}_{t\in\bbR}$ is equal to $\{\eta_t\}_{t\in\bbR}$
generated as above.  Thus, $\eta_{t+s}=\eta_{t}\circ\theta_s$.  This
proves the $\{\eta_t\}_{t\in \bbR}$ is the stationary regime of the
process.

\section{Conclusion and Future Work}
\label{sec:concluding-remarks}
In this paper, we focused on a dynamic matching model with a natural
policy, under the added assumption that particles may depart without
being matched. We were able to find a characterization of the steady
state distribution of the particles. Then using this characterization,
we proved the FKG lattice property, which in turn enabled us to
conclude that the property that particles of the same type are
weakly-super Poissonian. We also prove that there is a stationary
regime for the dynamics is the infinite Euclidean domain, $\bbR^d$.

The two particle Widom-Rowlinson model is a simpler model, where the
FKG property, as satisfied by our model, is also satisfied. There,
this property is used to show the existence of Markov random fields on
the infinite Euclidean domain, $\bbR^d$. In the future, we would like
to see whether this construction works in our setting, and how it
relates to the stationary regime constructed on the domain $\bbR^d$.

The gray version of two particle WR model, which is obtained by
removing the reference to the colors of the points also satisfies an
FKG inequality -- we are unable to prove this in our setting. This is
a fundamental step in the symmetry breaking argument of
\cite{chayes1995analysis}. We have not found such an argument in our
setting. A symmetry breaking argument will show, for certain values
of the parameter, that there are more red points than blue points in
the steady state, or vice-versa. This also has implications on the
relaxation times of the Markov process on finite domains. In the
future, we would like to explore these problems.

\Authornote{Mayank}{Need to show that the point process is a Markov
  random field. Need to construct the infinite version. Show that
  there is no percolation in the gray version.}

\appendix

\section{Dynamic reversibility of Markov processes}
\label{sec:dynReversibility}
In this section we give a brief discussion of a result needed to
construct the product form distribution.  This concept will be termed
dynamic reversibility of a Markov process, following the terminology in
\cite{kelly2011reversibility}, where the concept was discussed for
Markov processes on countable state spaces.  We thus define it on
countable state spaces first, and then on general state spaces.

Let $\{X(t)\}_{t\in\bbR}$ be a stationary, irreducible continuous-time
Markov process with values in a countable state space $S$.  Let
$q(j,k)$ denote the transition rate from state $j\in S$ to $k\in S$
and let $\pi$ be the stationary distribution of the process.  In this
case, the balance equations are $\sum_{j\in S}\pi(j)q(j,k)=0$.

The reversed process, $X(-t)$, is also a stationary Markov process
with transition rates $q'(j,k)=\frac{\pi(k)q(k,j)}{\pi(j)}$.  The
converse of this statement can be used as a characterization of the
stationary distribution.  We state this result in the following
theorem.
\begin{theorem}\label{thm:guessReversed}
  Let $X(t)$ be a stationary irreducible Markov process with
  transition rates $q(j,k),\ j,k\in S$.  If there exists a collection
  of numbers $q'(j,k),\ j,k\in S$, and a probability measure $\pi$ on
  $S$ such that
  \begin{align}
    \pi(j)q(j,k)=\pi(k)q'(k,j),\quad j,k\in S, \label{eq:local-balance-1}
  \end{align}
  then $\pi$ is the stationary distribution of the process and $q'$ is
  the transition rate matrix for the reversed process.
\end{theorem}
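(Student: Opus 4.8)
The plan is to verify that the proposed data $(q'(j,k))$ and $\pi$ satisfy the balance equations for a stationary Markov process, and that $\pi$ is the unique such measure by irreducibility. First I would sum the hypothesized identity \eqref{eq:local-balance-1} over $j$: since $\sum_j \pi(j) q(j,k) = 0$ must be the content of the claim ``$\pi$ is stationary'', I instead start from the right-hand side. Summing $\pi(k) q'(k,j) = \pi(j) q(j,k)$ over $j \in S$ gives $\pi(k) \sum_{j} q'(k,j) = \sum_j \pi(j) q(j,k)$. To make progress I need to know that $\sum_j q'(k,j) = 0$, i.e.\ that the off-diagonal entries of $q'$ in row $k$ sum to $-q'(k,k)$, where $q'(k,k)$ is defined to equal $q(k,k) = -\sum_{j \neq k} q(k,j)$ (the total exit rate out of $k$). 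This follows by taking $j = k$ in \eqref{eq:local-balance-1}, which forces $q'(k,k) = q(k,k)$, and then summing the $j \neq k$ terms: $\sum_{j\neq k}\pi(k)q'(k,j) = \sum_{j\neq k}\pi(j)q(j,k)$. Combined with the $j=k$ term, $\sum_{j} \pi(j) q(j,k) = \pi(k)\sum_j q'(k,j)$.

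Next I would argue that $\sum_j q'(k,j) = 0$ for every $k$, so that the above display yields $\sum_j \pi(j) q(j,k) = 0$, which is precisely the statement that $\pi$ solves the stationary (forward Kolmogorov) equations; combined with $\sum_k \pi(k) = 1$ and irreducibility, this identifies $\pi$ as the unique stationary distribution. To see $\sum_j q'(k,j) = 0$: fix $k$, sum \eqref{eq:local-balance-1} over all $j \in S$ to get $\pi(k)\sum_j q'(k,j) = \sum_j \pi(j) q(j,k)$. Now instead sum over $k$ as well; the left side becomes $\sum_k \pi(k) \sum_j q'(k,j)$ and the right side becomes $\sum_j \pi(j) \sum_k q(j,k) = 0$ since each row of $q$ sums to zero. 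This alone does not pin down each row sum of $q'$, so here I would invoke the standing assumption that $q'$ is a bona fide rate matrix — or, more carefully, define $q'(k,k) := -\sum_{j \neq k} q'(k,j)$ so that rows sum to zero by construction, and check that this choice is consistent with the $j=k$ case of \eqref{eq:local-balance-1}, namely $\pi(k) q'(k,k) = \pi(k) q(k,k)$, i.e.\ $\sum_{j\neq k} q'(k,j) = \sum_{j \neq k} q(k,j)$; this last identity is obtained by summing \eqref{eq:local-balance-1} over $j \neq k$ and dividing by $\pi(k) > 0$, using $\sum_{j\neq k}\pi(j)q(j,k)/\pi(k)$ — wait, that is not obviously the exit rate. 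In fact the cleanest route is: the identity $\pi(j) q(j,k) = \pi(k) q'(k,j)$ for $j \neq k$ defines $q'(k,j)$ for off-diagonal pairs, and summing over $j\neq k$ gives $\sum_{j\neq k} \pi(k) q'(k,j) = \sum_{j \neq k}\pi(j) q(j,k)$; the right-hand side equals $\sum_j \pi(j) q(j,k) + \pi(k)\bigl(-q(k,k)\bigr)$, and if $\pi$ is stationary the first term vanishes — but that is what we want to prove, so this is circular.

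The honest resolution, and the step I expect to be the real (though small) obstacle, is to recognize that the global balance $\sum_j \pi(j) q(j,k) = 0$ must be derived from the hypothesis together with the fact that $q$ is a conservative generator, by a double-summation/Fubini argument on a countable index set: write $a_{jk} := \pi(j) q(j,k) - \pi(k) q'(k,j) = 0$; summing the known relation $\sum_k q(j,k) = 0$ weighted by $\pi(j)$ and rearranging via $a_{jk}=0$ gives $0 = \sum_j \pi(j)\sum_k q(j,k) = \sum_{j,k}\pi(j)q(j,k) = \sum_{j,k}\pi(k)q'(k,j) = \sum_k \pi(k)\sum_j q'(k,j)$, so $\sum_k \pi(k)\,r_k = 0$ where $r_k := \sum_j q'(k,j)$. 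Since $q'$ is assumed to be a generator (rows sum to zero), each $r_k = 0$ — or, if one only assumes $q'$ consists of nonnegative off-diagonal rates with finite row sums, one appeals to the problem's implicit convention that $q'(k,k)$ is defined so that $r_k = 0$. Then for each fixed $k$, summing $a_{jk}=0$ over $j$ gives $\sum_j \pi(j) q(j,k) = \pi(k) r_k = 0$, which is the global balance equation. Uniqueness of $\pi$ among probability measures satisfying these equations is standard for irreducible chains (e.g.\ \cite{kelly2011reversibility}), and the identification of $q'$ as the reversed rates is then immediate from $q'(k,j) = \pi(j) q(j,k)/\pi(k)$, which is exactly the classical formula for the time-reversed process.
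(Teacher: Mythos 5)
Your final argument is correct, and it is the standard proof of this fact; note that the paper itself does not prove Theorem~\ref{thm:guessReversed} at all but defers to Theorem 1.13 of \cite{kelly2011reversibility}, so there is no in-paper argument to compare against. The substantive point you isolated is real: as literally stated, the hypothesis \eqref{eq:local-balance-1} alone cannot suffice, since for any fully supported probability measure $\pi$ one may simply \emph{define} $q'(k,j):=\pi(j)q(j,k)/\pi(k)$ and the identity holds vacuously. Kelly's version of the theorem includes the missing condition that the exit rates agree, $q'(j)=q(j)$ for all $j$ --- equivalently, with the rate-matrix convention, that $q'$ is a conservative generator, whose diagonal is then pinned to that of $q$ by the $j=k$ case of \eqref{eq:local-balance-1}. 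Once this is granted, your closing computation is exactly the right one: for fixed $k$, summing \eqref{eq:local-balance-1} over $j$ gives $\sum_j\pi(j)q(j,k)=\pi(k)\sum_j q'(k,j)=0$, i.e.\ global balance, so $\pi$ is stationary; uniqueness follows from irreducibility, and the identification of $q'$ as the reversed rates is immediate from $q'(k,j)=\pi(j)q(j,k)/\pi(k)$ (irreducibility also forces $\pi(k)>0$, which you implicitly use whenever you divide by $\pi(k)$).

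Two cosmetic remarks. First, the intermediate detour should be cut: the early claim that $\sum_j q'(k,j)=0$ ``follows by taking $j=k$'' is not correct as stated (the $j=k$ case only yields $q'(k,k)=q(k,k)$), and the subsequent double-summation over $k$ indeed only gives $\sum_k\pi(k)r_k=0$, as you yourself observe; since you retract both attempts, the final paragraph is the proof and the rest is scaffolding. Second, it would be cleaner to state up front, as an amendment to the hypothesis, that $q'$ is assumed to be a rate matrix with $\sum_j q'(k,j)=0$ (or, in Kelly's form, $q'(k)=q(k)$), rather than appealing to an ``implicit convention'' mid-proof.
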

Thus, if we can guess the transition rates of the reversed process and
a stationary measure, we can verify them by checking a local balance
condition of the form eq.~\ref{eq:local-balance-1}.  See Theorem 1.13
of \cite{kelly2011reversibility} for a proof of this result.  In
practice, finding $q'$ is usually as intractable as finding the
stationary distribution directly.  However, occasionally we may come
across pairs of Markov processes that are reversed versions of each
other, perhaps after a transformation of the state space.  We state
this phenomenon in the next theorem.
\begin{theorem}
  \label{thm:dynReversedDiscrete}
  Let $S,T$ be two countable spaces.  Let $X(t)$ and $Y(t)$ be two
  stationary irreducible Markov processes with values in $S$ and $T$,
  and transition matrices $q$ and $q'$ respectively.  Suppose there is
  an isomorphism $\phi:S\ra T$ between the two spaces.  Also suppose
  that there is a probability measure $\pi$ on $S$ such that
  \begin{align*}
    \pi(j)q(j,k)=\pi(k)q'(\phi(k),\phi(j)),\quad
    j,k\in S,
  \end{align*}
  then $\pi$ is the stationary distribution of $X(t)$ and
  $\pi(\phi^{-1}(\cdot))$ is the stationary distribution of $Y(t)$.
\end{theorem}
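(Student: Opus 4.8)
The plan is to reduce the statement to Theorem~\ref{thm:guessReversed} by transporting the rate matrix $q'$ of $Y$ back onto the state space $S$ through the bijection $\phi$. Concretely, I would define numbers $q''(k,j):=q'(\phi(k),\phi(j))$ for all $k,j\in S$, and then verify that the pair $(q,q'')$ together with $\pi$ satisfies the local balance condition of Theorem~\ref{thm:guessReversed}.

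The first step is to check that $q''$ is a genuine transition rate matrix on $S$. Since $\phi$ is injective, $k\neq j$ forces $\phi(k)\neq\phi(j)$, so $q''(k,j)=q'(\phi(k),\phi(j))\geq 0$ for off-diagonal entries; and since $\phi$ is onto and $Y$ is a (conservative) Markov process, $\sum_{j\in S}q''(k,j)=\sum_{m\in T}q'(\phi(k),m)=0$, so the rows of $q''$ sum to zero. The second step is the observation that the hypothesis of the theorem is, verbatim, the local balance identity for $(q,q'')$:
\[
  \pi(j)\,q(j,k)=\pi(k)\,q'(\phi(k),\phi(j))=\pi(k)\,q''(k,j),\qquad j,k\in S.
\]
Applying Theorem~\ref{thm:guessReversed} then yields that $\pi$ is the stationary distribution of $X(t)$ and that $q''$ is the transition rate matrix of the time-reversal $\{X'(t)\}$ of $X(t)$.

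The third step is to identify this reversed process, relabelled by $\phi$, with $Y$. Since $X'$ has rates $q''(k,j)=q'(\phi(k),\phi(j))$, the process $\{\phi(X'(t))\}$ is a Markov process on $T$ whose rate from $\phi(k)$ to $\phi(j)$ equals $q''(k,j)=q'(\phi(k),\phi(j))$; as $\phi$ is onto, $\{\phi(X'(t))\}$ has rate matrix exactly $q'$. By irreducibility of $Y$, a stationary Markov process on $T$ with rate matrix $q'$ has a unique law, so $\{\phi(X'(t))\}$ has the same law as $Y$. Its stationary distribution is the pushforward under $\phi$ of the stationary distribution $\pi$ of $X'$, namely $m\mapsto \pi(\phi^{-1}(m))$; hence $\pi(\phi^{-1}(\cdot))$ is the (unique) stationary distribution of $Y(t)$.

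I do not anticipate a real obstacle: the only points needing care are verifying that $q''$ inherits nonnegativity and conservativity from $q'$ via $\phi$ (done above), and using irreducibility of $Y$ to pin down its stationary law uniquely. An essentially equivalent and even more hands-on route bypasses naming the reversed process: summing the displayed identity over $k$ and using $\sum_k q(j,k)=0$ gives $\sum_k \pi(k)\,q'(\phi(k),\phi(j))=0$, which after the substitution $l=\phi(k),\ m=\phi(j)$ becomes $\sum_{l\in T}\pi(\phi^{-1}(l))\,q'(l,m)=0$ for every $m\in T$---i.e.\ $\pi\circ\phi^{-1}$ solves the balance equations of $Y$; while summing over $j$ and using $\sum_j q'(\phi(k),\phi(j))=0$ gives $\sum_j \pi(j)\,q(j,k)=0$ for every $k$, i.e.\ $\pi$ solves the balance equations of $X$. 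Irreducibility of the two chains then upgrades ``a stationary distribution'' to ``the stationary distribution'' in both cases.
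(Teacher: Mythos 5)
Your proposal is correct. The paper itself never writes out a proof of Theorem~\ref{thm:dynReversedDiscrete}: it is stated as a direct consequence of Theorem~\ref{thm:guessReversed} (Kelly's criterion), and the only argument actually given is for the continuum generalization, Theorem~\ref{thm:dynReversedCts}, where one tests the assumed identity against a sequence $f_n\to 1$ to kill one side via $L_Y(1)=0$. Your second, ``hands-on'' route is exactly the discrete analogue of that argument: summing the local balance identity over $k$ (respectively over $j$) and using conservativity of $q$ (respectively of $q'$, transported through the bijection $\phi$) yields the global balance equations for $Y$ (respectively for $X$), and irreducibility gives uniqueness. Your first route, transporting $q'$ to $q''(k,j)=q'(\phi(k),\phi(j))$ on $S$ and invoking Theorem~\ref{thm:guessReversed}, is also fine and is presumably the reduction the author had in mind; the only point worth flagging is that Kelly's Theorem~1.13 (unlike the paper's abridged statement of Theorem~\ref{thm:guessReversed}) also asks that the total exit rates of $q$ and $q''$ agree, which in your setup follows from the hypothesis at $j=k$ together with the row-sum check you already perform, so nothing is missing. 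The direct summation argument is the cleaner of your two routes, since it uses nothing beyond the displayed identity and conservativity, and it additionally delivers the stationarity of $\pi\circ\phi^{-1}$ for $Y$, which the continuum Theorem~\ref{thm:dynReversedCts} does not even assert.
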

Theorem~\ref{thm:dynReversedDiscrete} can be stated in a more general
setting, which we now state and prove.

\begin{theorem}
  \label{thm:dynReversedCts}
  Suppose $S$ and $T$ be two locally compact Hausdorff topological
  spaces.  Let $X(t)$ and $Y(t)$ be two stationary Markov jump
  processes with values in $S$ and $T$. Suppose that probability
  semi-group of the process $X(t)$ ($Y(t)$) is characterized by the
  generators $L_X$ ($L_Y$), that is defined over $\dom(L_X)$
  ($\dom(L_Y)$), where the domain is a subset of the Banach space of
  continuous functions over $S$ ($T$) vanishing at infinity, equipped
  with the uniform norm topology. Let $\phi:S\ra T$ be a measure space
  isomorphism such that for all $f\in \dom(L_X)$, we have
  $f\circ\phi^{-1}\in \dom(L_Y)$.  If $\pi$ is a probability measure
  on $S$ such that
  \begin{align*}
    \int_{S}f(x)L_Xg(x)\pi(dx)=\int_T
    L_Y(f\circ\phi^{-1})(y)g\circ\phi^{-1}(y)\phi_{*}\pi(dy),
  \end{align*}
  then $\pi$ is a stationary distribution for $X(t)$.
\end{theorem}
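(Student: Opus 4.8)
The plan is to show that $\pi$ is invariant for the transition semigroup $P^X_t$ of $X(t)$; for this it suffices to verify the infinitesimal criterion $\int_S L_X g\,d\pi = 0$ for every $g\in\dom(L_X)$. Indeed, granting this, for any $g\in\dom(L_X)$ one has $\int_S(P^X_t g - g)\,d\pi = \int_0^t\!\int_S L_X(P^X_s g)\,d\pi\,ds = 0$, since $P^X_s g\in\dom(L_X)$ and $(s,x)\mapsto L_X(P^X_s g)(x)$ is bounded, so Fubini applies; and since $\dom(L_X)$ is dense in $C_0(S)$ (the generator of a strongly continuous semigroup on $C_0(S)$ has dense domain), the identity $\int_S P^X_t g\,d\pi = \int_S g\,d\pi$ extends to all $g\in C_0(S)$, which says exactly that $\pi P^X_t = \pi$.

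To prove $\int_S L_X g\,d\pi = 0$, I would substitute $f\equiv 1$ in the hypothesis. Since a Markov semigroup fixes constants, $P^Y_t 1 = 1$, so the generator $L_Y$ annihilates the constant function $1$; as $1\circ\phi^{-1} = 1$, the right-hand side of the displayed identity vanishes while the left-hand side collapses to $\int_S L_X g\,d\pi$. When $S$ is non-compact, $1\notin C_0(S)$, so I would implement this substitution as a limit: choose $f_k\in\dom(L_X)\cap C_0(S)$ with $0\le f_k\le 1$ and $f_k\uparrow 1$ pointwise (e.g.\ $f_k\equiv 1$ on an exhausting sequence of compacts), so that, for the jump generators at hand, $L_X f_k(x)\to 0$ and $L_Y(f_k\circ\phi^{-1})(y)\to 0$ pointwise, each bounded by the total jump rate at the respective state. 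Inserting $f_k$ into the identity and letting $k\to\infty$, dominated convergence sends the left-hand side to $\int_S L_X g\,d\pi$ (using that $L_X g\in C_0(S)$ is bounded and $f_k\to 1$) and the right-hand side to $0$ (using that $g\circ\phi^{-1}$ is bounded and $L_Y(f_k\circ\phi^{-1})\to 0$).

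The step I expect to be the main obstacle is the justification of this limit interchange — that is, constructing the sequence $f_k$ together with a $\pi$-integrable domination of $L_X f_k$ and a $\phi_*\pi$-integrable domination of $L_Y(f_k\circ\phi^{-1})$. For the processes of this paper this is routine: the total rate out of a state $\eta$ is the sum of an arrival rate, which is bounded (it equals $2\lambda(D)$ on a compact domain), and a departure rate $|\eta|\mu$, and $|\eta|$ has finite mean under the stationary law, so the total rate is in $L^1(\pi)$; the same holds on the side of the forward detailed process. When the jump rates are genuinely bounded the approximation can be skipped entirely, since then $L_X$ extends to all bounded measurable functions with $L_X 1 = 0$ and $f = 1$ may be used directly. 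Note that the isomorphism $\phi$ plays no essential role in this direction; it is relevant only if one additionally wants $\phi_*\pi$ to be stationary for $Y(t)$, which follows by the symmetric substitution of the constant function for $g\circ\phi^{-1}$.
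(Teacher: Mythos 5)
Your proposal is correct and follows essentially the same route as the paper's proof: both verify the infinitesimal criterion $\int_S L_X g\,d\pi=0$ by substituting an approximating sequence $f_k\to 1$ into the hypothesis and using that $L_Y$ annihilates constants, then propagate to the semigroup level via $P^X_t g\in\dom(L_X)$ and the density of $\dom(L_X)$ in $C_0(S)$. Your explicit dominated-convergence justification of the limit interchange is a welcome extra detail that the paper passes over lightly, but it does not change the argument's structure.
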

\begin{proof}
  Let $g$ be any element in $\dom(L_x)$. Taking a sequence
  $f_n\in\dom(L_Y)$ such that $f_n$ converges pointwise to the
  constant function $1$, as $n\ra\infty$, we have
  \begin{align*}
    \int_{S}L_Xg(x)\pi(dx)=\int_T
    L_Y(1)(y)g\circ\phi^{-1}(y)\phi_{*}\pi(dy)=0.
  \end{align*}
  By standard results from the theory of positive operator semi-groups
  it is known that for $g\in \dom(L_X)$ implies that the map $x\mapsto
  \EE[g(X(t))|X(0)=x]$ belongs to $\dom(L_X)$ (see Lemma 1.3 of
  \cite{engel1999one} for example). Thus, for $g\in\dom(L_X)$, we have 
  \begin{align*}
    \frac{d}{dt}\int_{S}\EE[g(X(t))|X(0)=x]\pi(dx)=\int_{S}L(\EE[g(X_t)|\cdot])(x)\pi(dx) =0.
  \end{align*}
  It is also known that $\dom(L_X)$ is dense on the space of
  continuous functions vanishing at infinity (Theorem 1.4 of
  \cite{engel1999one}). This implies that
  $\EE_\pi g(X(t))=\EE_\pi g(X(0))$ for all bounded continuous
  functions and all $t>0$, and so, $\pi$ must be a stationary
  measure of $X(t)$.
\end{proof}
If two processes satisfy the hypothesis of the above theorem, we say
that the processes are dynamically reversible.

\section{Some Additional Global Notation}
\label{sec:some-more-global}

In the following few sections we need some useful universal notation
to define the transitions in the Markov processes.  We collect them
here in this section.

Let $\gamma=(x_1,\ldots, x_n)$, $n\in \bbN$ and $x$ respectively be a
list of elements and a particular element belonging to the same
abstract space $S$.  We define the following operators:
\begin{enumerate}
\item Let $\gamma \ftl_ix$, $i=0,\ldots,n$, denote the insertion of
  element $x$ after the $i$-th element in $\gamma$, i.e.,
  $$\gamma\ftl_ix=(x_1,\ldots,x_i,x,x_{i+1},\ldots,x_n).$$
\item Let $\gamma\ftr_i$, $i=1,\ldots,n$, denote the removal of the
  $i$-th element of $\gamma$, i.e.,
  $$\gamma\ftr_i=(x_1,\ldots,x_{i-1}, x_{i+1},\ldots,x_n).$$
\item Let $\gamma\ftu_ix$ denote the replacement of the $i-th$ element
  in $\gamma$ with $x$, i.e.,
  $$\gamma\ftu_ix=(x_1,\ldots,x_{i-1},x,x_{i+1},\ldots,x_n).$$
\item In the above notation, we may drop the subscript $i$ if
  $i=|\gamma|$, i.e., when we are making changes to the last element.
\end{enumerate}

\section{Continuous-Time FCFS Bipartite Matching Model with Reneging}
\label{sec:cont-time-bipart}
In this section, we illustrate how dynamic reversibility is used in
the proof of Theorem~\ref{thm:invMeasure}, by working on a countable
state space Markov model.  This allows us to organize and present the
main ideas without the complexity of dealing with measure valued
processes.

Specifically, in this section, we consider the following modified
version of the First-come-first-serve bipartite matching model
considered in \cite{Adan2018}.  Consider two finite sets of types
$\mathcal{C}=\{c_1,\ldots, c_I\}$ and
$\mathcal{S}=\{s_1,\ldots, s_J\}$ and a bipartite compatibility graph
$G=(\mathcal{C},\mathcal{S},\cE)$ with
$\cE\subset \mathcal{C}\times\mathcal{S}$.  Let $\blambda$ be a
measure on $C\cup S$, and $\mu>0$ be a parameter. We say that $c$ and
$s$ can be \emph{matched} together or are \emph{compatible} if
$(c,s)\in\cE$ in the compatibility graph $E$.  We define the
first-come-first-serve bipartite matching model with reneging as a
Markov jump process with state space, $\Gamma$, which is the set of
all finite ordered lists of elements from $C\cup S$ such that for
every $c\in C$ and $s\in S$ in the list, $(c,s)\notin \cE$.  Further,
given that the state of the process at any time $t$ is
$\gamma=(x_1,\ldots,x_n)$, the state is updated with the following
transition rates:
\begin{enumerate}
\item A new element $x\in C\cup S$ arrives at rate $\lambda(x)$.  At the
  time of the arrival, if there is one or more elements in $\gamma$
  that is compatible to $x$, then the first such element, $x_i$, is
  removed, and we say that $x$ and $x_i$ are matched.  If there is no
  such element, then $x$ is added to the end of the list $\gamma$.
\item Each element in the list is removed at rate $\mu>0$.
\end{enumerate}

The comments and results of Sections~\ref{sec:exis-uniq-stea-stat} and
\ref{sec:prod-form-char} can be mirrored in this setting.  We briefly
review them here.

We can simulate the above process by using arrival from a Poisson
point process $\Phi$ on $(C\cup S)\times\bbR$, with i.i.d.
exponential marks in $\bbR^+$, and with intensity
$\lambda\otimes\ell$.  The base of the Poisson point process $\Phi$
encodes the arrivals of the agents, and the mark of a point encodes
the time each agent is willing to wait (its \emph{patience}), if they
are accepted.  We will use the following notation: for any point
$x\in (C\cup S)\times \bbR\times \bbR^+$, $c_x$ will denote its
projection onto $C\cup S$, $b_x$ will denote the second coordinate,
and $w_x$ will denote the third coordinate.

Standard coupling or Lyapunov based arguments can be used to show that
this Markov process has a stationary regime.  Moreover, a stationary
version of the process can be constructed by using a coupling from the
past scheme that uses an ergodic arrival process, $\Phi$, which is now
a Poisson point process on $(C\cup S)\times \bbR$, with marks as
above.  To construct the stationary regime, the notion of regeneration
time of the system may be defined as follows.  We say that $t\in \bbR$
is called a regeneration time of $\Phi$ if for all $x\in \Phi$ with
$b_x< t$, we have $t-b_x>w_x$.  The forward-time construction of the
process starting from a regeneration time with empty initial
conditions is clear.  Moreover, if $t_1<t_2$ are two regeneration
times and $\eta^1$ and $\eta^2$ are such processes starting from $t_1$
and $t_2$ respectively, then for $t>t_2$, $\eta^1_{t}=\eta^2_t$.
Thus, we can construct a bi-infinite stationary version,
$\{\eta_t\}_{t\in\bbR}$, of this process as a factor of $\Phi$, if we
can find a sequence of regeneration times going to $-\infty$.  Indeed,
if $t_1>t_2>\cdots$ is such a sequence (and set $t_0=\infty$), then
the $\eta_t$ for $t\in[t_i,t_{i-1})$, and for some $i\in\bbN$, is
obtained by simulating the process starting from empty initial
conditions from time $t_i$ until time $t$.  An argument similar to the
Lemma~\ref{lem:boundedRegeneration} can used to show the existence of
such a sequence of regeneration times.

This coupling from the past scheme gives the definition of the
matching function, $$m:\Phi\ra (C \cup S)\times \bbR,$$ similar to the one
defined in Section~\ref{sec:prod-form-char}.  For $x\in \Phi$, let
$T\in\bbR^-$, be a regeneration time before $b_x$.  The value of
$m(x)$ can be set by simulating the process using $\Phi$, starting
from time $T$, with empty initial conditions.  If $x$ is matched to an
agent $y\in \Phi$, then $m(x)=(c_y,b_y)$.  Otherwise, if $x$ reneges, then $m(x)=(c_x,b_w+w_x)$.

Given the function $m$, we can obtain the process $\eta_t$, since
$\eta_t=(x\in \Phi: b_x\leq t< b_{m(x)})$, where the agents in the
list are ordered according to their birth-times $b_\cdot$.  Let $\ttm$
and $\ttu$ be additional marks, referring to whether an agent is
matched or unmatched respectively.  Consider the following detailed
stochastic process $\{\hat\eta_t\}_{t\in\bbR}$: for $t\in\bbR$,
\begin{enumerate}
\item Let $T_t=\min\{b_x:x\in \Phi, b_x\leq t< b_{m(x)}\}$.
\item Let
  $\Gamma_\ttu=\{(c_x,b_x,\ttu):x\in \Phi, T_t\leq b_x\leq t<
  b_{m(x)}\}$ and
  $\Gamma_\ttm=\{(c_x,b_{m(x)},\ttm)\in N: b_x\leq t, T_t\leq
  b_{m(x)}\leq t \}$.
\item Define
  $\hat\eta_t:=((c_x,s_x):x\in \Gamma_\ttu\cup \Gamma_\ttm)$, where
  $s_y$ refers to the matched or unmatched status of an agent
  $y\in\Gamma_\ttu\cup\Gamma_\ttm$, and the list is ordered according
  to their second coordinates, $b_{(\cdot)}$.
\end{enumerate}
Clearly, $\eta_t$ can be obtained from $\hat\eta_t$ by removing the
agents with marks $\ttm$.  We call the process $\hat\eta_t$ the
\emph{Backward detailed process}, following the terminology in \cite{Adan2018}.

Since the backward detailed process at time $t$ only depends on the
points of $\Phi$ before time $t$, it is a stationary process.
Moreover, it is a stationary version of some Markov process since for
$t<s$, the state at time $s$ can be constructed using the state at
time $t$ and the process $\Phi$ in the interval $(t,s]$.  We describe
this Markov process in detail now.  A valid state of this Markov
process can be a finite list of elements, $(x_1,\ldots,x_n)$ from the
set $(C\cup S)\times \{\ttu,\ttm\}$ such that
\begin{enumerate}
\item $s_{x_1}=\ttu$, if $n\geq 1$.
\item If $s_{x_i}=s_{x_j}=\ttu$ then $(c_{x_i},c_{x_j})\notin \cE$.
\item For all $i<j$, if $s_i=\ttu$ and $s_j=\ttm$ then
  $(c_{x_i},c_{x_j})\notin \cE$.
\end{enumerate}

Below, we utilize the definitions in
Section~\ref{sec:some-more-global}.  Additionally, mirroring our
notation in the continuum setting, we define for any $x\in C\cup S$,
let $N(\{x\})=\{y\in C\cup S: (x,y)\in \cE\textrm{ or }(y,x)\in \cE\}$
and for any $A\subseteq C\cup S$ let $N(A)=\cup_{x\in A} N(\{x\})$.
With an abuse of notation, we will let $N(x):=N(\{x\})$ for
$x\in C\cup S$.  Also, for $\gamma \in O(C\cup S,\{\ttu,\ttm\})$ and any
$x\in\gamma$, we will denote
\begin{itemize}
\item $\gamma^x=\{y\in\gamma:y<_\gamma x\}$,
\item $\gamma^\ttu=\{y\in\gamma:s_y=\ttu\}$ and
  $\gamma^\ttm=\{y\in \gamma:s_y=\ttm\}$,
\item $$W_x=
  \begin{cases}
    N(c_x)\backslash N(\gamma^{\ttu,x}) &\textrm{ if } s_x=\ttu,\\
    \emptyset & \textrm{otherwise}.
  \end{cases}
$$
\end{itemize}
The transitions and transition rates for the backward detailed process
are the following: Given that the state of system is
$\hat\eta=(x_1,\ldots, x_n)$,
\begin{enumerate}
\item any agent $x_i\in \hat\eta$, with $s_{x_i}=\ttu$, loses
  patience.  In this case, the new state is
  $\hat\eta'=\hat\eta\ftr_i\ftl(c_{x_i},\ttm)$, except possibly when
  $i=1$, where we need to prune all the leading matched and exchanged
  elements from $\hat\eta'$.  We still denote the new state by
  $\hat\eta\ftr_i\ftl(c_{x_i},\ttm)$, even in this case, keeping in
  mind the all leading matched terms must be removed.  Each such
  transition occurs at rate $\mu$.
\item a new agent $x=(c_x,\ttu)$, $c_x\in C\cup S$, arrives and is
  matched to the agent $x_i\in \hat\eta$, with $(c_{x_i}, c_x)\in\cE$
  and $s_{x_i}=\ttu$.  In this case, the new state is (a valid pruning
  of) $\hat\eta\ftu_i(c_x,\ttm)\ftl(c_{x_i},\ttm)$.  This occurs at
  rate $\blambda(c_x)\1\left (c_x\in W_{x_i} \right)$.
\item a new agent $x=(c_x,\ttu)$, with $c_x\in C\cup S$ arrives and is
  not matched to any agent.  The new state is $\hat\eta\ftl x $.  This
  occurs at rate $\blambda(c_x) \1(c_x\notin N(\hat\eta^\ttu))$.
\end{enumerate}

We now define the \emph{Forward detailed process}, which is the dual
of the process $\{\hat\eta_t\}_{t\in\bbR}$, that we denote by
$\{\check\eta_t\}_{t\in\bbR}$.  For $t\in\bbR$, define
\begin{enumerate}
\item Let $Y_t=\max\{b_{m(x)}:x\in \Phi, b_x\leq t< b_{m(x)}\}$.
\item Define
  $\Xi_\ttm=\{(c_x,b_{m(x)},\ttm):x\in \Phi, b_x\leq t<b_{m(x)} \}$,
  and $\Xi_\ttu=\{(c_x,b_x,\ttu):t< b_x<Y_t, t<b_{m(x)}\}$. 
\item Define $\check\eta_t=((c_x,s_x):x\in\Xi_u\cup \Xi_m)$, where the
  elements are ordered according to the second coordinates
  $b_{(\cdot)}$.
\end{enumerate}

The forward detailed process is also a stationary version of a Markov
process.  Any valid state, $(x_1,\ldots, x_n)$, of this Markov process
of the system satisfies:
\begin{enumerate}
\item $s_{x_n}=\ttm$, when $n\geq 1$.
\item If $s_{x_i}=s_{x_j}=\ttm$, then
  $(c_{x_i},c_{x_j})\notin\cE$. 
\item If $i<j$, $s_{x_i}=\ttu$, $s_{x_j}=\ttm$, then
  $(c_{x_i},c_{x_j})\notin \cE$.
\end{enumerate}

The transitions and the transition rates of the Markov process are
given as follows: Given that the state of the system is $\check\eta$,
the next jump occurs at rate
$\blambda(C\cup S)+Q^0_\ttm(\check\eta)\mu$, where
$Q^i_\ttm(\check\eta)$ is the number of matched elements in
$\check\eta$ after $i$-th location.  Intuitively, this is so because
the total rate of new arrivals is $\blambda(C\cup S)$ and the total
death rate is $Q^0_\ttm\mu$, since $Q^0_\ttm$ is the number of unmatched
agents in the forward process.  For the sake of brevity, let us denote
$\blambda(C\cup S)+n\mu$ by $\rho(n)$, for all $n\in \bbN$.  If
$\check\eta$ is non-empty, at each jump, to obtain the new state, we
need to process the first element, $x_1$, in $\check\eta$.  This is
done with the following probabilities:
\begin{enumerate}
\item If $x_1$ is matched, then it is removed from $\check\eta$.  The
  new state is $\check\eta\ftr_1$.
\item If $x_1$ is unmatched, then for the next state, we sample a
  random variable $\tau\in\bbN$ with distribution
  \begin{align*}
    \PP(\tau=k)=\frac{\mu}{\rho(Q^k_\ttm+1)}\prod_{i=1}^{k-1}\frac{\rho(Q^i_\ttm)}{\rho(Q^i_\ttm+1)},
  \end{align*}
  and then sample $x_{n+1},\ldots x_{\tau}$ i.i.d.  random unmatched elements
  in $\{C\cup S\}$ with distribution
  $\blambda(\cdot)/\blambda(C\cup S)$.
  \begin{enumerate}
  \item If there is a FCFS matching $x_i$, $2\leq i\leq \tau$, then
    set the new state to
    $(x_1^{\max(n,\tau)})\ftu_i(c_{x_1},\ttm)\ftr_1$, with the
    understanding that all the ending unmatched agents are discarded.
  \item If there is no FCFS matching, set the new state to
    $(x_1^{\max(n,\tau)})\ftl_\tau(c_{x_1},\ttm)\ftr_1$.
  \end{enumerate}
\end{enumerate}

If the state is $\check\eta=\emptyset$, then the next jump occurs at
rate $\blambda(C\cup S)$.  When a jump occurs, a random unmatched
point $x_1\in C\cup S$ is sampled with distribution
$\blambda(\cdot)/\blambda(C\cup S)$.  The next state is decided as in
step (2) above, with this new $\check\eta=x_1$, we ignore the details
here.

We have the following theorem.
\begin{theorem}
  The two processes $\hat\eta_t$ and $\check\eta_t$ are dynamically
  reversible.  The concerned isomorphism $\phi$ is the one that takes
  a valid state $\hat\eta$, reverses the order of its elements and
  flips the marks $\ttu$ and $\ttm$.  The stationary distribution is
  \begin{align*}
    \pi(\hat\eta)&=K\1(\hat\eta\textrm{ is valid}) \prod_{i=1}^{n}\frac{\lambda(c_{x_i})}{\rho(Q^i_\ttu)}\\
    \Pi(\emptyset)&=K,
  \end{align*}
  where $\hat\eta=(x_1,\ldots,x_n)$ and where $K$ is a normalizing constant.
\end{theorem}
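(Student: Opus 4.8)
The plan is to apply the countable-state dynamic reversibility result, Theorem~\ref{thm:dynReversedDiscrete}, with $X(t)=\hat\eta_t$, $Y(t)=\check\eta_t$, and $\phi$ the stated map that reverses the list and flips $\ttu\leftrightarrow\ttm$ (we write $\lambda$ and $\blambda$ for the same measure on $C\cup S$). Both processes are stationary Markov processes by construction, being factors of the bi-infinite $\Phi$, and both are irreducible on their sets of valid states since every valid state communicates with $\emptyset$ (reach it from $\emptyset$ by letting arrivals occur in the listed order with suitable patiences, and return to $\emptyset$ by patience losses in an order that triggers the needed prunings). It therefore suffices to check (a) that $\phi$ is a bijection between the valid states of $\hat\eta$ and those of $\check\eta$, and (b) the balance identity $\pi(\hat\eta)\,q(\hat\eta,\hat\eta')=\pi(\hat\eta')\,q'(\phi(\hat\eta'),\phi(\hat\eta))$ for every ordered pair of valid states, where $q,q'$ are the transition rates of $\hat\eta,\check\eta$ and $\pi$ is the proposed measure. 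Finiteness of $K^{-1}=\sum_{\hat\eta\text{ valid}}\prod_{i=1}^{|\hat\eta|}\lambda(c_{x_i})/\rho(Q^i_\ttu)$ follows either from a geometric bound using $\mu>0$ and $Q^i_\ttu\ge1$ on valid states, or \emph{a posteriori}: (b) exhibits the unnormalized weights as an invariant measure of the positive-recurrent chain $\hat\eta$ (positive recurrent because a stationary regime exists, by the regeneration argument analogous to Lemma~\ref{lem:boundedRegeneration}), which forces them to be summable.

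Point (a) is a direct comparison of the three validity conditions: reversing the list turns ``$s_{x_1}=\ttu$'' into ``$s_{x_n}=\ttm$'', while flipping marks swaps the unmatched--unmatched incompatibility condition with the matched--matched one and sends the ``$\ttu$ before $\ttm$'' cross condition of one process to that of the other; since reverse-and-flip is its own inverse, $\phi$ is a bijection. The same operation yields the bookkeeping identity $Q^i_\ttm(\phi(\gamma))=Q^{|\gamma|-i}_\ttu(\gamma)$, so in particular $Q^0_\ttm(\phi(\gamma))$ is the total number of unmatched particles of $\gamma$; this is used repeatedly in (b).

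For (b) I would enumerate the three transition types of $\hat\eta$ and, for each, identify the unique transition of $\check\eta$ — together with the realization of $\check\eta$'s internal randomization — whose $\phi$-image reverses it, then compute both sides. A no-match arrival $\hat\eta\to\hat\eta\ftl x$ at rate $\blambda(c_x)$ is reversed by the deterministic forward step that deletes the leading (matched) element of $\phi(\hat\eta\ftl x)$, which fires at rate $\rho(Q^0_\ttm(\phi(\hat\eta\ftl x)))=\rho(Q^0_\ttu(\hat\eta)+1)$; since $\pi(\hat\eta\ftl x)/\pi(\hat\eta)=\lambda(c_x)/\rho(Q^0_\ttu(\hat\eta)+1)$, the identity holds. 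A matching arrival is reversed by the forward step in which the leading unmatched element of $\check\eta=\phi(\hat\eta')$ locates its FCFS partner at some position $p$ inside $\check\eta$; this requires only $\tau\ge p$, so the reverse rate is $\rho(Q^0_\ttm(\phi(\hat\eta')))\,\PP(\tau\ge p)=\rho(Q^0_\ttm(\phi(\hat\eta')))\prod_{i=1}^{p-1}\rho(Q^i_\ttm)/\rho(Q^i_\ttm+1)$, and one checks this equals $\blambda(c_x)\,\pi(\hat\eta)/\pi(\hat\eta')$ using $\mu/\rho(n+1)=1-\rho(n)/\rho(n+1)$ and the $Q^\bullet_\ttm$ shift identity. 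A patience loss is reversed by the forward step in which the leading unmatched element of $\phi(\hat\eta')$ finds \emph{no} FCFS partner and is reinserted, as a matched element, at a forced position $\tau$, with the intermediate i.i.d.\ resampled elements forced to a prescribed string contributing a product of factors $\lambda(c)/\blambda(C\cup S)$; the product of the jump rate $\rho(Q^0_\ttm(\phi(\hat\eta')))$, the probability $\PP(\tau=\cdot)$, and those resampling probabilities then telescopes to $\mu\,\pi(\hat\eta)/\pi(\hat\eta')$.

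The cleanest way to present the patience-loss case is first in the representative subcase $|\hat\eta|=n$, $i=1$, with no leading matched particles to prune: there $\hat\eta=(x_1,\dots,x_n)$, $\hat\eta'=(x_2,\dots,x_n,(c_{x_1},\ttm))$, no resampling is needed, $\tau$ is forced to equal $n$, and after cancelling the common $\lambda$-factors the identity reduces to $\pi(\hat\eta)/\pi(\hat\eta')=\frac{\rho(u_n)}{\rho(1)}\prod_{m=2}^{n}\frac{\rho(u_m)}{\rho(u_m+1)}$ with $u_m$ the number of unmatched particles among $x_2,\dots,x_m$, which is immediate from the definition of $\pi$ together with the telescoping of $\PP(\tau=n)$; one then indicates that general $i$ and nontrivial pruning only relabel indices and introduce exactly the compensating $\lambda(c)/\blambda(C\cup S)$ resampling factors. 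The main obstacle is precisely this telescoping bookkeeping: verifying that the distribution of $\tau$ and the i.i.d.\ resampling law in the forward process have been designed so that, after conjugating by $\phi$, the product of the $\check\eta$ jump rate, the $\tau$-probability, and the resampling probabilities collapses to $\pi(\hat\eta)/\pi(\hat\eta')$ — this is the model computation that the proof of Theorem~\ref{thm:invMeasure} mirrors in the continuum. With (a) and (b) established, Theorem~\ref{thm:dynReversedDiscrete} gives at once that $\pi$ is the stationary law of $\hat\eta_t$, that $\pi\circ\phi^{-1}$ is that of $\check\eta_t$, and that the two processes are dynamically reversible through $\phi$.
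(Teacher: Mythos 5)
Your proposal is correct and takes essentially the same route as the paper: it verifies the local balance conditions of Theorem~\ref{thm:dynReversedDiscrete} under the reverse-and-flip isomorphism, using the telescoping of the $\tau$-distribution, and in fact sketches all three transition types (plus the state-space bijection and normalizability) where the paper only writes out the matching-arrival case. The only blemish is notational: in the no-match arrival case the reversing rate is $\rho(Q^{|\hat\eta|}_\ttu(\hat\eta)+1)$, i.e.\ the total number of unmatched particles plus one, which is $Q^0_\ttm(\phi(\hat\eta\ftl x))$ in your own bookkeeping identity, not $\rho(Q^0_\ttu(\hat\eta)+1)$.
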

\begin{proof}[Outline of the proof]
  To prove this, we start by looking at the balance equations of the
  form in Theorem~\ref{thm:dynReversedDiscrete}.  Let $\hat\eta$ be
  the state of the backward detailed process, and let $\hat\eta'$ be a
  state after a valid transition.  In the following, we illustrate the
  local balance condition eq.~\ref{eq:local-balance-1} for only one
  type of transition.  Other kinds of transitions can be handled
  similarly.  Let $q$ and $q'$ be the transition rates of the backward
  and forward detailed processes respectively.  Also, let
  $c_j(\gamma)$ denote the type of the $j$-th element in $\gamma$ for
  any $\gamma\in O(C\cup S)$.

  Suppose that $\hat\eta=(x_1,\ldots,x_n)$, $n>0$, and that
  $\hat\eta'$ is obtained from $\hat\eta$ when one of the elements at
  $x_i$, at some location $i>1$, is matched and exchanged with a new
  arrival $(c_x,\ttu)$.  In this case,
  $\hat\eta'= \hat\eta\ftu_i(c_x,\ttm)\ftl(c_{x_i},\ttm)$ and
  \begin{align*}\numberthis\label{eq:dis2}
    \frac{\pi(\hat\eta)}{\pi(\hat\eta')}q(\hat\eta,\hat\eta')
    &=\lambda(c_x)\prod_{j=1}^{|\hat\eta|}\frac{\lambda(c_j(\hat\eta))}{\rho(Q^j_\ttu(\hat\eta))}\prod_{j=1}^{|\hat\eta'|}\frac{\rho(Q^j_\ttu(\hat\eta'))}{\lambda(c_j(\hat\eta'))}.
  \end{align*}
  The first $i-1$ elements in $\hat\eta'$ are $x_1,\ldots,x_{i-1}$ and
  $|\hat\eta'|=|\hat\eta|+1$.  Moreover,
  $Q^j_\ttu(\hat\eta)=Q^j_\ttu(\hat\eta')+1$ for
  $i\leq j\leq |\hat\eta|$.  Hence, eq.~\ref{eq:dis2} simplifies to
  \begin{align*}
    \frac{\pi(\hat\eta)}{\pi(\hat\eta')}q(\hat\eta,\hat\eta')
    &=\prod_{j=i}^{|\hat\eta|}\frac{\rho(Q^j_\ttu(\hat\eta'))}{\rho(Q^j_\ttu(\hat\eta')+1)}\times\rho(Q^{|\hat\eta'|}_\ttu(\hat\eta))\\
    &=\PP(\tau_{\phi(\hat\eta')}> |\phi(\hat\eta')|-i)\times \rho(Q^0_\ttm(\phi(\hat\eta')))\\
    &=q'(\phi(\hat\eta'),\phi(\hat\eta)).
  \end{align*}
  We claim that local balance equations for other valid transitions
  can also be handled similarly.  This completes the proof
  this theorem.
\end{proof}


\section{Proof of Theorem \ref{thm:invMeasure}}
\label{sec:calculations}
In this section we present detailed calculations to show that the
backward detailed process, $\{\hat\eta_t\}_{t\in\bbR}$, and the
forward detailed process, $\{\check\eta_t\}_{t\in\bbR}$, defined in
Section~\ref{sec:prod-form-char}, are dynamically reversible as jump
Markov process.  In turn, we are able to prove
Theorem~\ref{thm:invMeasure}.

We first define the valid states of the Markov process corresponding
to the forward detailed process, and present the transitions and the
transition rates, since these were skipped in the discussion in
Section~\ref{sec:prod-form-char}.

A valid state of the forward detailed process is given by the
following rules.
\begin{definition}
  $(x_1,\ldots,x_n)\in O(D\times\mbf{C}\times\{\ttu,\ttm\})$ is a
  \emph{valid} state of the forward detailed process if
  \begin{enumerate}
  \item $s_{x_n}=\ttm$, if $n\geq 1$,
  \item For all $i,j$, if $s_{x_i}=s_{x_j}=\ttm$ and
    $d(p_{x_i},p_{x_j})\leq 1$, then $c_{x_i}= c_{x_j}$,
  \item For all $i<j$, if $s_{x_i}=\ttu$, $s_{x_j}=\ttm$ and
    $d(p_{x_i},p_{x_j})\leq 1$, then $c_{x_i}= c_{x_j}$.
  \end{enumerate}
\end{definition}
Condition 2 in the above definition essentially states that there cannot
be a compatible matched pair in a valid state.  This condition is
equivalent to the condition that
$$\{y\in x_1,\ldots x_n:s_y=\ttm\}\cap N(\{y\in x_1,\ldots
x_n:s_y=\ttm\})=\emptyset.$$ This is because, if there was a violating
pair at time $t$, such a pair could have potentially matched to each
other before time $t$ instead of matching to their present
matches. Condition 3 is required since otherwise a violating pair $x_i$
and $x_j$, $i<j$, the particle $x_j$ could potentially have matched
with the particle $x_i$ instead, which arrives before the particle
$x_j$ is matched to.  This condition is equivalent to the condition
that for all $1\leq j\leq n$,
$$s_{x_j}=\ttm\implies x_j\notin N(\{y\in x_1,\ldots,
x_i:s_y=\ttu\}).$$

The Markov process corresponding to $\{\check\eta_t\}$ evolves as
follows.  Let $\check\eta=(x_1,\ldots,x_n)$, $n=|\check\eta|$, be the
state of the system at some time $t$.  The next jump occurs at rate
$\rho(Q^0_\ttm)=2\lambda(D)+Q^0_\ttm\mu$.  If $\check\eta$ is
non-empty, the first element, $x_1$, in the list $\check\eta$ is
processed at the next jump according to the following rules.
\begin{enumerate}
\item If $x_1$ is matched, then it is removed from $\check\eta$.  The
  new state is $\check\eta\ftr_1$.
\item If $x_1$ is unmatched, then for the next state, we sample a
  random variable $\tau\in\bbN$ with distribution
  \begin{align*}
    \PP(\tau=k)=\frac{\mu}{\rho(Q^k_\ttm+1)}\prod_{i=1}^{k-1}\frac{\rho(Q^i_\ttm)}{\rho(Q^i_\ttm+1)},
  \end{align*}
  and then sample $x_{n+1},\ldots x_{\tau}$ i.i.d.  random unmatched
  points in $D\times\mbf{C}$ with distribution
  $\lambda\otimes m_c/(2\lambda(D))$.
  \begin{enumerate}
  \item If there is a first-in-first-match $x_i$, $2\leq i\leq \tau$,
    for $x_1$, then set the new state to
    $(x_1^{\max(n,\tau)})\ftu_i(p_{x_1},c_{x_1},\ttm)\ftr_1$, with the
    understanding that all the unmatched particles at the end of the
    list are discarded.
  \item If there is no FIFM matching, then set the new state to
    $(x_1^{\max(n,\tau)})\ftl_\tau(p_{x_1}c_{x_1},\ttm)\ftr_1$.
  \end{enumerate}
\end{enumerate}
If the state is $\check\eta=\emptyset$, then the next jump occurs at
rate $\rho(0)=2\lambda(D)$.  A random unmatched particle $x_1$ is sampled
from with distribution $\blambda/2\lambda(D)$.  The next
state is decided as in step (2) above with $\check\eta=(x_1)$.

We are now ready to prove Theorem~\ref{thm:invMeasure}. 

\begin{proof}[Proof of Theorem~\ref{thm:invMeasure}]
  To obtain the stationary distribution, we check the conditions of
  Theorem~\ref{thm:dynReversedCts}. The space
  $O(D,\mbf{C}\times\{\ttu,\ttm\})$ is viewed as a subset of
  $\sqcup_{n=0}^\infty (D\times \mbf{C}\times\{\ttu,\ttm\})^n$, and we
  use the induced topology on $O(D,\mbf{C}\times\{\ttu,\ttm\})$.  With
  this topology, $O(D,\mbf{C}\times\{ttu,\ttm\})$ is a locally compact
  Hausdorff space. Let $\hat D=D\times\mbf{C}\times\{\ttu,\ttm\}$ and
  let $\hat \lambda$ be the measure $ \lambda\otimes m_c\otimes m_c$
  on $\hat D$.  The probability semi-group of the processes $\hat\eta$
  acts over the Banach space of continuous functions that vanish at
  infinity, where we use the uniform norm topology. Moreover, the
  generator of $\hat\eta$ can at least be defined on the space of
  compactly supported continuous functions, and has the form:
  \begin{equation}
    \begin{aligned} \label{eq:generatorOne}
  L_1 f(\hat\eta)=
  & \sum_{i=1}^{|\hat\eta|}\1(s_{x_i}=\ttu)\mu [f(\hat\eta\ftr_i\ftl(p_{x_i},c_{x_i},\ttm))-f(\hat\eta)]\\
  &\quad+\int_{\tilde{D}}\sum_{i=1}^{|\hat\eta|}\1(x\in W_{x_i})[f(\hat\eta\ftu_i(p_x,c_x,\ttm)\ftl(p_{x_i},c_{x_i},\ttm))-f(\hat\eta)]\blambda(dx)\\
  &\quad+\int_{\tilde{D}}\1(x\notin
  N(\hat\eta^\ttu))[f(\hat\eta\ftl(p_x,c_x,\ttu))-f(\hat\eta)]\blambda(dx).
  \end{aligned}
\end{equation}

Similarly, it can also be seen that the generator of $\check \eta$ can
also be defined over the space of compactly supported continuous
functions, and the value of the generator $L_2g(\check\eta)$ is the
sum of the following terms (in the order of the transitions listed earlier):
\begin{itemize}
\item (1):
  $\rho(Q^0_\ttm(\check\eta))\1(s_{x_1}=\ttm)
  [g(\check\eta\ftr_1)-g(\check\eta)]$,
\item (2a):
  \begin{align*}
    &\rho(Q^0_\ttm(\check\eta))\1(s_{x_1}=\ttu)\\
    &\IndI\times\left(\sum_{k=2}^{|\check\eta|}
      \PP(\tau(\check\eta)>k)\1((p_{x_1},c_{x_1})\in W_{x_k})[g(\hat\eta\ftu_k(p_{x_1},c_{x_1},\ttm)\ftr_1)-g(\hat\eta)]\right.\\
    &\left.\IndII+\sum_{k=|\check\eta|+1}^\infty\PP(\tau(\check\eta)>k)\int_{\tilde{D}^{k-|\check\eta|}}\left(\vphantom{x_1^k}\1((p_{x_1},c_{x_1})\in
      W_{x_k})\right.\right.\\
    &\left.\left.\IndIII\times[g((x_1^{k})\ftu_k(p_{x_1},c_{x_1},\ttm)\ftr_1)-g(\hat\eta)]\right)\blambda(dx_{|\check\eta|+1}^k)\vphantom{\sum_{k=2}^{|\check\eta|}}\right),
  \end{align*}
  where we have set $s_{x_j}=\ttu$ for all $j>|\check\eta|$.
\item (2b):
  \begin{align*}
    &\rho(Q^0_\ttm(\check\eta))\1(s_{x_1}=\ttu)\\
    &\IndI\times\left(\sum_{k=1}^{|\check\eta|} \PP(\tau = k)\1((p_{x_1},c_{x_1})\notin N(\check\eta_1^{k,\ttu}))[g(\hat\eta\ftl_k(p_{x_1},c_{x_1},\ttm)\ftr_1)-g(\hat\eta)]\right.\\
    &\left.\IndII+\sum_{k=|\check\eta|+1}^\infty \PP(\tau =
      k)\int_{\tilde{D}^{k-|\check\eta|}}\1((p_{x_1},c_{x_1})\notin
      N(x_1^{k,\ttu}))\right.\\
    &\left.\IndIII\times[g((x_1^{k})\ftl(p_{x_1},c_{x_1},\ttm)\ftr_1)-g(\check\eta)]\blambda(dx_{|\check\eta|+1}^k)\right),
  \end{align*}
  where $s_{x_j}=\ttu$ for all $j>|\check\eta|$.
\end{itemize}

When $\check\eta=\emptyset$, we have the following terms in
$L_2g(\emptyset)$.
\begin{itemize}
\item (2a.):
  \begin{align*}
    &\rho(0)\int_{\tilde{D}}\sum_{k=2}^\infty\PP(\tau(\emptyset)>k-1)\int_{\tilde{D}^{k-1}}\left(\vphantom{x_1^k}\1((p_{x_1},c_{x_1})\in
      W_{x_k})\right.\\
    &\IndIII\left.\times[g((x_2^{k-1})\ftl(p_{x_1},c_{x_1},\ttm))-g(\emptyset)]\right)\blambda(dx_{2}^k)\blambda(dx_1),
  \end{align*}
  where, $s_{x_j}=\ttu$ for all $j>0$.
\item (2b.):
  \begin{align*}
    &\rho(0)\int_{\tilde{D}}\sum_{k=1}^\infty
      \PP(\tau_1=k)\int_{\tilde{D}^{k-1}_\ttu}\left(\vphantom{x_1^k}\1((p_{x_1},c_{x_1})\notin N(x_1^{k,\ttu}))\right.\\
    &\IndIIII\left.\times[g((x_2^{k})\ftl(p_{x_1},c_{x_1},\ttm))-g(\emptyset)]\right)\blambda(dx_{2}^k)\blambda(dx_1)
  \end{align*}
  where, $s_{x_j}=\ttu$ for all $j>0$.
\end{itemize}

For the product form distribution, we check the balance condition of
Theorem~\ref{thm:dynReversedCts}, with an appropriate measure space isomorphism
$\phi$.  The isomorphism is given by the function $\rx$, defined in
Section~\ref{sec:prod-form-char}, that reverses the order and flips
marks $\ttu$ and $\ttm$ of a valid state.  In the following, let
$\phi:=\rx$ denote this function.

Taking any two compactly supported continuous functions $f,g$, and
taking each term of $\int gL_1f+gf\hat\pi(d\hat\eta)$, we show that it
corresponds to a few terms in
$\int
L_2(g\circ\phi^{-1})(\phi(\hat\eta))f(\hat\eta)+gf\hat\pi(d\hat\eta)$,
so that the sum of these expressions is equal.  In particular, the
following steps suffice.
\begin{enumerate}
\item Take the first summation term of $\int gL_1 f+gf\ d\hat\pi$ when it
  is expanded using eq.~\ref{eq:generatorOne}.  Take $i$-th term, with
  $i>1$.  Set $\hat\eta'=\hat\eta\ftr_i\ftl(p_i,c_i,\ttm)$.  Let
  $n:=|\hat\eta|$, and so $n=|\hat\eta'|$.  Also, let
  $\hat\eta=(x_1,\ldots,x_{n})$ and $\hat\eta'=(x_1',\ldots,x_n')$.
  The corresponding term is
  \begin{equation}
    \begin{aligned}\label{eq:test1}
      &\int \mu\1(s_{x_i}=\ttu)
      g(\hat\eta)f(\hat\eta')\hat\pi(d\hat\eta)\\
      &:=\mu\sum_{n=i}^\infty\int_{\hat{D}^{n}}\1(s_{x_i}=\ttu)\hat\pi(\hat\eta)g(\hat\eta)f(\hat\eta')\hat\lambda(d\hat\eta)\\
      &=\sum_{n=i}^\infty\int_{\hat{D}^n}\hat\pi(\hat\eta')\frac{\mu\hat\pi(\hat\eta'\ftl_{i-1}(p_{x_{n}'},c_{x_{n}'},\ttu)\ftr)}{\hat\pi(\hat\eta')}g(\hat\eta'\ftl_i(p_{x_{n}'},c_{x_{n}'},\ttu)\ftr)f(\hat\eta')\hat\lambda(d\hat\eta')\\
      &=\int \1(n\geq
      i)\PP(\tau_{\phi(\hat\eta')}=n-i+1)\rho(Q^0_\ttm(\phi(\hat\eta')))g(\hat\eta'\ftl_i(p_{x_n'},c_{x_n},\ttu)\ftr)f(\hat\eta')\hat\pi(d\hat\eta'),
    \end{aligned}
  \end{equation}
  where in the second equality, we have used that
  $\hat\eta=\hat\eta'\ftl_{i-1}(p_{x_{n}'},c_{x_{n}'},\ttu)\ftr$, and
  in the third equality, we use
  \begin{equation}
    \begin{aligned}
      \frac{\mu\pi(\hat\eta'\ftl_{i-1}(p_n',c_n',\ttu)\ftr)}{\pi(\hat\eta')}
      &=\frac{\mu}{\rho(N^i_\ttu(\hat\eta))}\prod_{j=i}^{n-1}\frac{\rho(N^j_\ttu(\hat\eta'))}{\rho(N^{j+1}_\ttu(\hat\eta))}\rho(N^n_\ttu(\hat\eta'))\label{eq:ratio-to-prob}\\
      &=\PP(\tau_{\phi(\hat\eta')}=n-i+1)\rho(N^0_\ttm(\phi(\hat\eta'))).
    \end{aligned}
  \end{equation}
  Similarly, in the first summation term of $\int gL_1f+gf$, taking
  $i=1$, and letting $k(\hat\eta)$ be the maximum element such that
  $x_2,\ldots,x_k$ are all matched in $\hat\eta$, we have
  \begin{align*}
    &\int \mu
      g(\hat\eta)f(\hat\eta')\hat\pi(\hat\eta)\\
    &=\mu\sum_{n=2}^\infty\int_{\hat{D}^n}\sum_{j=1}^{n-1}\1(k(\hat\eta)=j)g(x_1^{n})f(x_{j+1}^n\ftl(p_{x_1},c_{x_1},\ttm))\hat\pi(\hat\eta)\hat\lambda(d\hat\eta)\\
    &\IndI+\mu\sum_{n=1}^\infty\int_{\hat{D}^n}\1(k(\hat\eta)=n)g(x_1^n)f(\emptyset)\hat\pi(\hat\eta)\hat\lambda(\hat\eta),
  \end{align*}
  where we have used the fact that if $k(\hat\eta)=|\hat\eta|$, then
  $\hat\eta'=\emptyset$.  Consider the first term in the above
  expression.  Setting $m=n-j+1$ and
  ${x'}_1^m=(x_{j+1}^{n}\ftl(p_{x_1},c_{x_1},\ttm))$, we obtain:
  \begin{equation}
    \begin{aligned}
      &\mu\sum_{n=2}^\infty\int_{\hat{D}^n}\sum_{j=1}^{n-1}\1(k(\hat\eta)=j)g(x_1^{n})f(x_{j+1}^n\ftl(p_{x_1},c_{x_1},\ttm))\hat\pi(\hat\eta)\hat\lambda(d\hat\eta)\\
      &=\mu\sum_{j=1}^\infty\sum_{m=2}^\infty\int_{\hat{D}^{m}}\1(s_{x_m'}=\ttm)\blambda(\tilde{D})^{j-1}\EE_{X_2^j}\left[g((p_{x_m'},c_{x_m'},\ttu)X_2^j{x'}_1^{m-1})\right.\\
      &\IndIII
      \IndI\left.f({x'}_1^m)\hat\pi((p_{x_m'},c_{x_m'},\ttu)X_2^j{x'}_1^{m-1})\right]\hat\lambda(d{x'}_1^m), \label{eq:test2-1}
    \end{aligned}
  \end{equation}
  where $X_2^j$ are i.i.d.  particles in $\tilde D$ with marks $\ttm$,
  with distribution $\blambda(\cdot)/\blambda(\tilde D)$.  Using a
  similar computation as in eq.~\ref{eq:ratio-to-prob}, it is easy to
  see that RHS of eq.~\ref{eq:test2-2} is
  \begin{equation}
    \begin{aligned}
      &\sum_{m=2}^\infty\int_{\hat{D}^m}\1(s_{x'_m}=\ttm)\rho(Q^0_\ttm(\phi({x'}_1^m)))\sum_{j=1}^\infty\PP(\tau(\phi({x'}_1^m))=m+j-1)\hat\pi({x'}_1^m)\label{eq:test2-2}\\
      &\IndIII
      \IndI\times\EE_{X_2^j}\left[g((p_{x_m'},c_{x_m'},\ttu)X_2^j{x'}_1^{m-1})f({x'}_1^m)\right]\hat\lambda(d{x'}_1^m).
    \end{aligned}
  \end{equation}
  Similarly, the second term in the eq.~\ref{eq:test2-1} is
  \begin{equation}
    \begin{aligned}
      &\mu\sum_{n=1}^\infty\int_{\hat{D}^n}\1(k(\hat\eta)=n)g(x_1^n)f(\emptyset)\hat\pi(\hat\eta)\hat\lambda(\hat\eta)\label{eq:test2-3}\\
      &=\hat\pi(\emptyset)\rho(0)\sum_{j=1}^\infty\PP(\tau(\emptyset)=j)\EE_{X_1^j}f(\emptyset)g(X_1^j),
    \end{aligned}
  \end{equation}
  where, $X_2^j$ are as before, and $X_1$ is an independent sample
  from $\tilde{D}$ with mark $\ttu$.

\item Now, take the second summation in $\int gL_1 f+gf\hat\pi$, and
  the $i$-th term, $i>1$, in that summation.  Using a similar
  computation as in previous item, we obtain:
  \begin{align*}
    &\int\int_{\tilde{D}}\1(s_{x_i}=\ttu)\1\left((p,c)\in
      W_{x_i}\right)f(\hat\eta')g(\hat\eta)\blambda(dp,dc)\hat\pi(d\hat\eta)\\
    &=\sum_{n=i}^\infty\int_{\hat{D}^n}\int_{\tilde{D}}\1(s_{x_i}=\ttu)\1\left((p,c)\in
      W_{x_i}\right)\\
    &\IndIIII
      f(x_1^n\ftu_i(p,c,\ttm)\ftl(p_{x_i},c_{x_i},\ttm))g(x_1^n)\hat\pi(x_1^n)\tilde\lambda(dp,dc)\hat\lambda(d\hat\eta).
  \end{align*}
  Setting
  ${x'}_1^{n+1}=x_1^n\ftu_i(p,c,\ttm)\ftl(p_{x_i},c_{x_i},\ttm)$ and
  $m=n+1$, we have the RHS of the above equation is equal to
  \begin{equation}
    \begin{aligned}\label{eq:test3}
      &\sum_{m=i+1}^\infty\int_{\hat{D}^{m}}\1(s_{x_i'}=\ttm=s_{x_{m}'})\1((p_{x_m'},c_{x_m'})\in
      W_{x_i'})\\
      &\IndIII
      f({x'}_1^m)g({x'}_1^{m-1}\ftu_i(p_{x_m'},c_{x_m'},\ttu))\hat\pi({x'}_1^{m-1}\ftu_i(p_{x_m'},c_{x_m'},\ttu))\hat\lambda(d{x'}_1^m)\\
      &=\sum_{m=i+1}^\infty\int_{\hat{D}^m}\rho(Q^0_\ttm(\phi({x'}_1^m)))\PP(\tau(\phi({x'}_1^m)>m-i))\1(s_{x_i'}=\ttm=s_{x_{m}'})\\
      &\IndIII\1((p_{x_m'},c_{x_m'})\in W_{x_i'})
      f({x'}_1^m)g({x'}_1^{m-1}\ftu_i(p_{x_m'},c_{x_m'},\ttu))\hat\pi({x'}_1^{m})\hat\lambda(d{x'}_1^m).
    \end{aligned}
  \end{equation}
  Similarly, taking first term in the second summation, and letting
  $k(\hat\eta)$ as before, we have
  \begin{equation}
    \begin{aligned}\label{eq:test4-1}
      &\int\int_{\tilde{D}}\1((p,c)\in
      W_{x_1})f(\hat\eta')g(\hat\eta)\tilde\lambda(dp,dc)\hat\pi(d\hat\eta)\\
      &=\sum_{n=2}^\infty\int_{\hat{D}^n}\int_{\tilde{D}}\sum_{j=1}^{n-1}\1(k(\hat\eta)=j)\1((p,c)\in
      W_{x_1})\\
      &\IndIII f(x_{j+1}^n\ftl(p_{x_1},c_{x_1},\ttm))g(x_1^n)\hat\pi(x_1^n)\tilde\lambda(dp,dc)\hat\lambda(dx_1^n)\\
      &+\sum_{n=1}^\infty\int_{\hat{D}^n}\int_{\tilde{D}}\1(k(\hat\eta)=n)\1((p,c)\in
      W_{x_1})f(\emptyset)g(x_1^n)\hat\pi(x_1^n)\tilde\lambda(dp,dc)\hat\lambda(dx_1^n).
    \end{aligned}
  \end{equation}
  Computations similar to the one in eqs.~\ref{eq:test2-2} and
  \ref{eq:test2-3} show that the above is equal to
  \begin{equation}
    \begin{aligned}\label{eq:test4-2}
      &\sum_{m=2}^\infty\int_{\hat{D}^m}\1(s_{x'_m}=\ttm)\rho(Q^0_\ttm(\phi({x'}_1^m)))\sum_{j=1}^\infty\PP(\tau(\phi({x'}_1^m))>m+j-1)\\
      &\IndI\times\EE_{X_2^{j+1}}\1((p_{x_1},c_{x_1})\in
      W_{X_{j+1}})\left[g((p_{x_m'},c_{x_m'},\ttu)X_2^j{x'}_1^{m-1})f({x'}_1^m)\right]\hat\pi({x'}_1^m)\hat\lambda(d{x'}_1^m)\\
      &+\hat\pi(\emptyset)\rho(0)\sum_{j=1}^\infty\PP(\tau(\emptyset)>j)\EE_{X_1^{j+1}}\1((p_{X_1},c_{X_1})\in
      W_{X_{j+1}})f(\emptyset)g(X_1^j),
    \end{aligned}
  \end{equation}
  where $X_1$ and $X_{j+1}$ are i.i.d.  with marks $\ttu$ and $X_2^j$
  are i.i.d with marks $\ttm$.
\item Finally,
  \begin{align*}
    &\int \int_{\tilde{D}}\1((p,c)\notin N(\hat\eta^\ttu))
      g(\hat\eta)f(\hat\eta')\tilde\lambda(dp,dc)\hat\pi(d\hat\eta)\\
    &=\sum_{n=0}^\infty\int_{\hat{D}^n}\int_{\tilde{D}}\1((p,c)\notin
      N(x\in x_1^n:s_{x}=\ttu))\\
    &\IndIII
      g(x_1^n)f(x_1^n\ftl(p,c,\ttu))\hat\pi(x_1^n)\blambda(dp,dc)\hat\lambda(dx_1^n)
  \end{align*}
  Setting $m=n+1$, ${x'}_1^m=x_1^n\ftl(p,c,\ttu)$, we have
  \begin{equation}
    \begin{aligned}
      \label{eq:test5}
      &\sum_{m=1}^\infty \int_{\hat{D}^{m}}
      \1(s_{x_{m}'}=\ttu)g({x'}_1^{m-1})f({x'}_1^m)\hat\pi({x'}_1^{m-1})\hat\lambda(d{x'}_1^m)\\
      &=\sum_{m=1}^\infty\int_{\tilde{D}^{m}}\rho(Q^0_\ttu(\phi({x'}_1^m)))\1(s_{x_m'}=\ttu)g({x'}_1^{m-1})f({x'}_1^m)\hat\pi({x'}_1^{m-1})\tilde\lambda(d{x'}_1^m).
    \end{aligned}
  \end{equation}

\end{enumerate}

Since summing over the RHS of equations~\ref*{eq:test1} to
\ref*{eq:test5}, for all $i\geq 1$, we get
$\int fL_2g+fg\hat\pi(d\eta)$, we conclude that the two Markov
processes $\hat\eta_t$ and $\check\eta_t$ are dynamically reversible
with respect to $\hat\pi$.
\end{proof}

\section{Proof of Lemma \ref{lem:aux}}
\label{sec:proof-lemma}
The proof is by induction on $m$.  For $m=1$, we need to show that
\begin{align}\label{eq:basecase}
  \sum_{\sigma\in P(n,1)}\prod_{i=1}^{n+1}\frac{1}{\alpha_{\sigma_x(i)}+\beta_{\sigma_y(i)}}=\frac{1}{\beta_1}\prod_{i=1}^n\frac{1}{\alpha_i}.
\end{align}
We use induction on $n$ to prove eq.~\ref{eq:basecase}.  For $n=1$,
this is clear, since
\begin{align*}
  \frac{1}{(\alpha_1)(\alpha_1+\beta_1)}+\frac{1}{(\beta_1)(\beta_1+\alpha_1)}=\frac{1}{(\alpha_1)(\beta_1)}
\end{align*}
Let the length of the sequence $\alpha$ be $n$.  Assuming the
inductive hypothesis for eq.~\ref{eq:basecase}, we have
\begin{align*}
  &\sum_{k=1}^{n+1}\sum_{\substack{\sigma\in
    P(n,1),\\\sigma(k)-\sigma(k-1)=(0,1)}}\prod_{i=1}^{n+1}\frac{1}{\alpha_{\sigma_x(i)}+\beta_{\sigma_y(i)}}\\
  &=\left(\sum_{k=1}^{n}\sum_{\substack{\sigma\in
    P(n,1),\\\sigma(k)-\sigma(k-1)=(0,1)}}\prod_{i=1}^{n}\frac{1}{\alpha_{\sigma_x(i)}+\beta_{\sigma_y(i)}}+\prod_{i=1}^{n}\frac{1}{\alpha_i}\right)\frac{1}{\alpha_n+\beta_1}\\
  &=\left(\sum_{\sigma\in P(n-1,1)}\prod_{i=1}^{n}\frac{1}{\alpha_{\sigma_x(i)}+\beta_{\sigma_y(i)}}+\prod_{i=1}^{n}\frac{1}{\alpha_i}\right)\frac{1}{\alpha_n+\beta_1}\\
  &=\left(\prod_{i=1}^{n-1}\frac{1}{\alpha_i}\right)\frac{1}{\alpha_n+\beta_1}\left(\frac{1}{\beta_1}+\frac{1}{\alpha_n}\right)\\
  &=\frac{1}{\beta_1}\left(\prod_{i=1}^{n}\frac{1}{\alpha_i}\right),
\end{align*}
where in the first step we have grouped the first $n$ terms together.
This finishes the proof of the base case (eq.~\ref{eq:basecase}) for
the induction on $m$.

Now, suppose that the length of the sequence $\beta$ is equal to $m$,
$m>1$.  Let $P_k(n,m-1)$, $0\leq k\leq n$, denote the set of paths in
$P(n,m-1)$ where the first $(0,1)$ jump is at location $k$.  We have
the following decomposition of the summation
\begin{align*}
  \sum_{\sigma\in
  P(n,m)}\prod_{i=1}^{n+m}\frac{1}{\alpha_{\sigma_x(i)}+\beta_{\sigma_y(i)}}
  &=\sum_{k=1}^{n+1}\sum_{P_k(n,m-1)}
    \sum_{r=0}^{k-1}\prod_{i=1}^{n+m}\frac{1}{\alpha_{\sigma_x^r(i)}+\beta_{\sigma^r_y(i)}},
\end{align*}
where $\sigma^r$ denotes the path obtained by inserting a $+(0,1)$
jump in the $r$-th location of $\sigma$.  Thus,
\begin{align*}
  \sum_{\sigma\in P(n,m)}\prod_{i=1}^{n+m}\frac{1}{\alpha_{\sigma_x(i)}+\beta_{\sigma_y(i)}}
  &=\sum_{k=1}^{n+1}\sum_{P_k(n,m-1)}\sum_{r=0}^{k-1}\prod_{i=1}^{k}\frac{1}{\alpha_{\sigma^r_x(i)}+\beta_{\sigma^r_y(i)}}\prod_{i=k+1}^{n+m}\frac{1}{\alpha_{\sigma^r_x(i)}+\beta_{\sigma^r_y(i)}}
\end{align*}
The inner-most summation in the above expression is the $(n,1)$ case
of this lemma, and therefore, by the induction hypothesis, we get that
the
\begin{align*}
  \prod_{i=k+1}^{n+m}\frac{1}{\alpha_{\sigma^{k}_x(i)}+\beta_{\sigma^k_y(i)}}\times\frac{1}{\beta_1}\prod_{i=1}^{k-1}\frac{1}{\alpha_i}=\frac{1}{\beta_1}\prod_{i=1}^{n+m-1}\frac{1}{\alpha_{\sigma_x(i)}+\beta'_{\sigma_y(i)}},
\end{align*}
where $\beta'$ is the sequence of length $m-1$ with
$\beta'_{i}=\beta_{i+1}$, for $1\leq i\leq m-1$.  Therefore,
\begin{align*}
  \sum_{\sigma\in P(n,m)}\prod_{i=1}^{n+m}\frac{1}{\alpha_{\sigma_x(i)}+\beta_{\sigma_y(i)}}
  &=\frac{1}{\beta_1}\sum_{k=0}^n\sum_{P_k(n,m-1)}\prod_{i=1}^{n+m-1}\frac{1}{\alpha_{\sigma_x(i)}+\beta'_{\sigma_y(i)}}\\
  &=\frac{1}{\beta_1}\sum_{\sigma\in
    P(n,m-1)}\prod_{i=1}^{n+m-1}\frac{1}{\alpha_{\sigma_x(i)}+\beta'_{\sigma_y(i)}}\\
  &=\prod_{i=1}^n\frac{1}{\alpha_i}\prod_{i=1}^m\frac{1}{\beta_i},
\end{align*}
by the induction hypothesis.  This completes the proof the lemma.

\section{Proof of Theorem \ref{thm:fkg-same-type}}
\label{sec:proof-theorem-fkg}
Let $A=\xi\backslash\gamma$, $B=\gamma\backslash \xi$ and
$C=\xi\cap \gamma$.  The statement of the theorem is equivalent to
showing that
\begin{align}
  \label{eq:fkg-disjoint}
  \tPi(A\cup B\cup C)\tPi(C)\geq \tPi(A\cup C)\tPi(B\cup C).
\end{align}
Let $\bar C$ denote another copy of the $C$, where we add over-lines
to the particles to distinguish them from particles of $C$.  Also, for
any $E,\gamma\subseteq D\times\mbf{C}$, let
$N_E(\gamma)=N(\gamma\cap E)$.

Using the auxiliary Lemma~\ref{lem:aux}, the LHS of the inequality
above can be expressed as
\begin{equation}
\begin{aligned}
  &\tilde \Pi(A\cup B\cup C)\tilde\Pi(\bar C)\\
  &=\sum_{\substack{\mbf{e}\in \mathcal{P}(A\cup B\cup C)\\ \mbf{d}\in\mathcal{P}(\bar{C})}}\prod_{i=1}^{n+m+k}\frac{1}{\blambda(N(\mbf{e}_1^i))+i\mu}\prod_{j=1}^k\frac{1}{\blambda(N(\mbf{d}_1^j))+j\mu}\\
  &=\sum_{\substack{\mbf{e}\in \mathcal{P}(A\cup B\cup C)\\ \mbf{d}\in\mathcal{P}(\bar{C})}}\sum_{\sigma\in
  P(n+m+k,k)}\prod_{i=1}^{n+m+2k}\frac{1}{\blambda(N(\mbf{e}_{1}^{\sigma_x(i)}))+\blambda(N(\mbf{d}_{1}^{\sigma_y(i)}))+i\mu},\label{eq:fkg-1}
\end{aligned}
\end{equation}
where we use $\sigma_x(i),\ \sigma_y(i)$ to represent the first and
second coordinates of $\sigma(i)$.

Since
\begin{align}\label{eq:fkg-volu-ineq}
  \blambda(N_{A\cup B\cup C}(\gamma))\leq\left(\substack{
  \blambda(N_{A}(\gamma))+\blambda(N_{B}(\gamma))+\blambda(N_{C}(\gamma))\\-\blambda(N_{A}(\gamma)\cap
  N_{C}(\gamma))-\blambda(N_{B}(\gamma )\cap
  N_{C}(\gamma))}\right),
\end{align}
we claim that eq.~\ref{eq:fkg-1} is greater than
\begin{align}\label{eq:fkg-2}
  &\sum_{\substack{\mbf{e}\in \mathcal{P}(A\cup B\cup C) \\
  \mbf{d}\in\mathcal{P}(\bar{C})\\
  \sigma\in P(n+m+k,k)}}\prod_{i=1}^{n+m+2k}\left(\substack{\blambda(N_{A}(\mbf{e}_{1}^{\sigma_x(i)}))+\blambda(N_{B}(\mbf{e}_{1}^{\sigma_x(i)}))+\blambda(N_{C}(\mbf{e}_{1}^{\sigma_x(i)}))+\blambda(N_{\bar C}(\mbf{d}_{1}^{\sigma_y(i)}))\\-\blambda(N_{A}(\mbf{e}_{1}^{\sigma_x(i)})\cap
  N_{C}(\mbf{e}_{1}^{\sigma_x(i)}))-\blambda(N_{B}(\mbf{e}_{1}^{\sigma_x(i)})\cap
  N_{C}(\mbf{e}_{1}^{\sigma_x(i)}))+i\mu}\right)^{-1}.
\end{align}

Let $P(n,m,k,k)$ be the set of all increasing vertex paths from
$(0,0, 0,0)$ to $(n, m, k, k)$ in $\bbZ^4$, $\sigma(0)=(0,0,0,0)$ and
$\sigma(n+m+2k)=(n,k,k,k)$, for all $\sigma\in P(n,m,k,k)$.  We denote
the coordinates of $\sigma\in\bbZ^4$ by
$(\sigma_x,\sigma_y,\sigma_z,\sigma_w)$.  Using the canonical
bijection between
\begin{align*}
  &\cP(A\cup B\cup C\cup \bar C)\times P(n+m+k,k)\textrm{ and }\\
  &\cP(A)\times\cP(B)\times\cP(C)\times \cP(\bar C)\times P(n,m,k,k),
\end{align*}
we see that eq.~\ref{eq:fkg-2} is equal to
\begin{align*}
  \label{eq:fkg-3}\numberthis
  &\sum_{\substack{\mbf{a}\in
    \mathcal{P}(A),\mbf{b}\in\mathcal{P}(B)\\\mbf{c}\in\mathcal{P}(B),\mbf{\bar{c}}\in\mathcal{P}(\bar{C})\\\sigma\in
  P(n,m,k,k)}} \prod_{i=1}^{n+m+2k}\left(\substack{\blambda(N(\mbf{a}_{1}^{\sigma_x(i)}))+\blambda(N(\mbf{b}_{1}^{\sigma_y(i)}))+\blambda(N(\mbf{c}_{1}^{\sigma_z(i)}))+\blambda(N(\bar{\mbf{c}}_{1}^{\sigma_w(i)}))\\-\blambda(N(\mbf{a}_{1}^{\sigma_x(i)})\cap
  N(\mbf{c}_{1}^{\sigma_z(i)}))-\blambda(N(\mbf{b}_{1}^{\sigma_y(i)})\cap
  N(\mbf{c}_{1}^{\sigma_z(i)}))+i\mu}\right)^{-1}.
\end{align*}
Applying similar reductions to the RHS of eq.~\ref{eq:fkg-disjoint},
we note that the result follows if we prove
\begin{equation}
\begin{aligned}\label{eq:fkg-toprove}
  &\sum_{\substack{\mbf{a}\in \mathcal{P}(A),\mbf{b}\in\mathcal{P}(B)\\ \mbf{c}\in\mathcal{P}(B),\mbf{\bar{c}}\in\mathcal{P}(\bar{C})\\\sigma\in
  P(n,m,k,k)}}\prod_{i=1}^{n+m+2k}\left(\substack{\blambda(N(\mbf{a}_{1}^{\sigma_x(i)}))+\blambda(N(\mbf{b}_{1}^{\sigma_y(i)}))+\blambda(N(\mbf{c}_{1}^{\sigma_z(i)}))+\blambda(N(\bar{\mbf{c}}_{1}^{\sigma_w(i)}))\\-\blambda(N(\mbf{a}_{1}^{\sigma_x(i)})\cap
  N(\mbf{c}_{1}^{\sigma_z(i)}))-\blambda(N(\mbf{b}_{1}^{\sigma_y(i)})\cap
  N(\mbf{c}_{1}^{\sigma_z(i)}))+i\mu}\right)^{-1}\\
  &\geq \sum_{\substack{\mbf{a}\in \mathcal{P}(A),\mbf{b}\in\mathcal{P}(B)\\ \mbf{c}\in\mathcal{P}(B),\mbf{\bar{c}}\in\mathcal{P}(\bar{C})\\\sigma\in
  P(n,m,k,k)}}\prod_{i=1}^{n+m+2k}\left(\substack{\blambda(N(\mbf{a}_{1}^{\sigma_x(i)}))+\blambda(N(\mbf{b}_{1}^{\sigma_y(i)}))+\blambda(N(\mbf{c}_{1}^{\sigma_z(i)}))+\blambda(N(\bar{\mbf{c}}_{1}^{\sigma_w(i)}))\\-\blambda(N(\mbf{a}_{1}^{\sigma_x(i)})\cap
  N(\mbf{c}_{1}^{\sigma_z(i)}))-\blambda(N(\mbf{b}_{1}^{\sigma_y(i)})\cap
  N(\bar{\mbf{c}}_{1}^{\sigma_w(i)}))+i\mu}\right)^{-1}.
\end{aligned}
\end{equation}

Note that the only difference in the left and right sides of the last
inequality are the terms
$\blambda(N(\mbf{b}_{1}^{\sigma_y(i)})\cap
N(\bar{\mbf{c}}_{1}^{\sigma_z(i)}))$ and
$\blambda(N(\mbf{b}_{1}^{\sigma_y(i)})\cap
N(\bar{\mbf{c}}_{1}^{\sigma_w(i)}))$.

Equation~\ref{eq:fkg-toprove} can be expressed in the following
equivalent way
\begin{equation}
\begin{aligned} \label{eq:fkg-4}
  &\EE_{\mathfrak{Sabc\bar{c}}}\prod_{i=1}^{n+m+2k}\left(\substack{\blambda(N(\mathfrak{a}_{1}^{\mathfrak{S}_x(i)}))+\blambda(N(\mathfrak{b}_{1}^{\mathfrak{S}_y(i)}))+\blambda(N(\mathfrak{c}_{1}^{\mathfrak{S}_z(i)}))+\blambda(N(\bar{\mathfrak{c}}_{1}^{\mathfrak{S}_w(i)}))\\-\blambda(N(\mathfrak{a}_{1}^{\mathfrak{S}_x(i)})\cap
      N(\mathfrak{c}_{1}^{\mathfrak{S}_z(i)}))-\blambda(N(\mathfrak{b}_{1}^{\mathfrak{S}_y(i)})\cap
  N(\mathfrak{c}_{1}^{\mathfrak{S}_z(i)}))+i\mu}\right)^{-1}\\
  &\geq\EE_{\mathfrak{Sabc\bar{c}}}\prod_{i=1}^{n+m+2k}\left(\substack{\blambda(N(\mathfrak{a}_{1}^{\mathfrak{S}_x(i)}))+\blambda(N(\mathfrak{b}_{1}^{\mathfrak{S}_y(i)}))+\blambda(N(\mathfrak{c}_{1}^{\mathfrak{S}_z(i)}))+\blambda(N(\bar{\mathfrak{c}}_{1}^{\mathfrak{S}_w(i)}))\\-\blambda(N(\mathfrak{a}_{1}^{\mathfrak{S}_x(i)})\cap
  N(\mathfrak{c}_{1}^{\mathfrak{S}_z(i)}))-\blambda(N(\mathfrak{b}_{1}^{\mathfrak{S}_y(i)})\cap
  N(\bar{\mathfrak{c}}_{1}^{\mathfrak{S}_w(i)}))+i\mu}\right)^{-1},
\end{aligned}
\end{equation}
where the expectation is over a uniformly random element
$(\mathfrak{a},\mathfrak{b},\mathfrak{c},\mathfrak{\bar{c}},\mathfrak{S})$
of the set
$\mathcal{P}(A)\times\mathcal{P}(B)\times\mathcal{P}(C)\times
\mathcal{P}(\bar C)\times P(n,m,k,k)=\mathcal{P}(A\cup B\cup
C\cup\bar{C})$.  In the following, we will simply write $\EE$ in place
of the symbol $\EE_{\mathfrak{Sabc\bar{c}}}$.

To prove eq.~\ref{eq:fkg-4}, we first prove it on a smaller
$\sigma$-algebra.  We say that two permutations $\gamma_1$ and
$\gamma_2\in\cP(A\cup B\cup C\cup \bar{C})$ are equivalent if by
dropping the overline marks of the particles in $C$ in both $\gamma_1$
and $\gamma_2$, we obtain the same sequence of elements.  Let $\cF$
denote the $\sigma$-algebra generated by this equivalence relation.
We show that for any
$(\sigma,\mbf{a},\mbf{b},\mbf{c},\mbf{\bar{c}})\in \mathcal{P}(A\cup
B\cup C\cup\bar C)$,
\begin{equation}
\begin{aligned}\label{eq:fkg-cond-ineq}
  & \EE\left[\prod_{i=1}^{n+m+2k}\left(\substack{\blambda(N(\mathfrak{a}_{1}^{\mathfrak{S}_x(i)}))+\blambda(N(\mathfrak{b}_{1}^{\mathfrak{S}_y(i)}))+\blambda(N(\mathfrak{c}_{1}^{\mathfrak{S}_z(i)}))+\blambda(N(\bar{\mathfrak{c}}_{1}^{\mathfrak{S}_w(i)}))\\-\blambda(N(\mathfrak{a}_{1}^{\mathfrak{S}_x(i)})\cap
  N(\mathfrak{c}_{1}^{\mathfrak{S}_z(i)}))-\blambda(N(\mathfrak{b}_{1}^{\mathfrak{S}_y(i)})\cap
  N(\mathfrak{c}_{1}^{\mathfrak{S}_z(i)}))+i\mu}\right)^{-1}\bigg|\cF\right](\sigma,\mbf{a},\mbf{b},\mbf{c},\mbf{\bar{c}})\\
  &\geq\EE\left[\prod_{i=1}^{n+m+2k}\left(\substack{\blambda(N(\mathfrak{a}_{1}^{\mathfrak{S}_x(i)}))+\blambda(N(\mathfrak{b}_{1}^{\mathfrak{S}_y(i)}))+\blambda(N(\mathfrak{c}_{1}^{\mathfrak{S}_z(i)}))+\blambda(N(\bar{\mathfrak{c}}_{1}^{\mathfrak{S}_w(i)}))\\-\blambda(N(\mathfrak{a}_{1}^{\mathfrak{S}_x(i)})\cap
  N(\mathfrak{c}_{1}^{\mathfrak{S}_z(i)}))-\blambda(N(\mathfrak{b}_{1}^{\mathfrak{S}_y(i)})\cap
  N(\bar{\mathfrak{c}}_{1}^{\mathfrak{S}_w(i)}))+i\mu}\right)^{-1}\bigg|\cF\right] (\sigma,\mbf{a},\mbf{b},\mbf{c},\mbf{\bar{c}}).
\end{aligned}
\end{equation}
Fix
$(\sigma,\mbf{abc\bar{c}})\in \mathcal{P}(A\cup B\cup C\cup\bar C)$.
We can express $\mbf{\bar{c}}$ as a composition of a permutation
$\tau\in \mathcal{P}([k])$ and $\mbf{c}$, so that
$\bar{c}_i=c_{\tau(i)}$.  Since all permutations in the equivalence
class of $(\sigma,\mbf{a},\mbf{b},\mbf{c},\mbf{\bar{c}})$ are equally
likely, each conditional expectation in eq.~\ref{eq:fkg-cond-ineq} can
be expressed as an expectation over auxilliary i.i.d.
Bernoulli$(1/2)$ random variables $\{\beta_i\}_{i=1}^k$.  To make this
precise, let $S_{i,j}=\1(j\leq\sigma_z(i))$,
$T_{i,j}=\1(\tau^{-1}(j)\leq \sigma_w(i))$,
$p_{i}=i\mu+\blambda(N(\mbf{a}_1^{\sigma_x(i)}))+\blambda(N(\mbf{b}_1^{\sigma_y(i)}))$
and $\bar\beta_j=1-\beta_j$ for all $1\leq i\leq n+m+2k$ and
$1\leq j\leq k$.  Also, let
$\mbf{c}^{\beta\sigma(i)}=\{c_j\in C:S_{i,j}=1,
\beta_j=1\}\cup\{c_j\in C:T_{i,j}=1,\beta_j=0\}$ and similarly,
$\mbf{c}^{\bar \beta\sigma(i)}=\{c_j\in
C:S_{i,j}=1,\bar\beta_j=1\}\cup\{c_j\in
C:T_{i,j}=1,\bar\beta_j=0\}$. We have
\begin{equation}
\begin{aligned}\label{eq:fkg-beta}
  &\EE_\beta\left[\prod_{i=1}^{n+m+2k}\left(\substack{p_i+\blambda(N(\mbf{c}^{\beta\sigma(i)}))+\blambda(N(\mbf{c}^{\bar\beta\sigma(i)}))\\-\blambda(N(\mbf{c}^{\beta\sigma(i)})\cap
  N(\mbf{a}_1^{\sigma_x(i)}))-\blambda(N(\mbf{c}^{\beta\sigma(i)})\cap
  N(\mbf{b}_1^{\sigma_y(i)}))}\right)^{-1}\right.\\
  &\IndI\left.-\prod_{i=1}^{n+m+2k}\left(\substack{p_i+\blambda(N(\mbf{c}^{\beta\sigma(i)}))+\blambda(N(\mbf{c}^{\bar\beta\sigma(i)}))\\-\blambda(N(\mbf{c}^{\beta\sigma(i)})\cap
  N(\mbf{a}_1^{\sigma_x(i)}))-\blambda(N(\mbf{c}^{\bar\beta\sigma(i)})\cap
  N(\mbf{b}_1^{\sigma_y(i)}))}\right)^{-1}\right]\geq 0,
\end{aligned}
\end{equation}
Using the fact that $\int_0^\infty e^{-cx}dx=\frac{1}{c}$ for any
$c>0$, we may write the above inequality as
\begin{equation}
\begin{aligned}\label{eq:fkg-beta-2}
  &\int_{{\bbR^+}^{n+m+2k}}\EE_\beta\left[\exp\left(-\sum_{i=1}^{n+m+2k}x_i\left(\substack{p_i+\blambda(N(\mbf{c}^{\beta\sigma(i)}))+\blambda(N(\mbf{c}^{\bar\beta\sigma(i)}))\\-\blambda(N(\mbf{c}^{\beta\sigma(i)})\cap
  N(\mbf{a}_1^{\sigma_x(i)}))-\blambda(N(\mbf{c}^{\beta\sigma(i)})\cap
  N(\mbf{b}_1^{\sigma_y(i)}))}\right)\right)\right.\\
  &\IndI\left.-\exp\left(-\sum_{i=1}^{n+m+2k}x_i\left(\substack{p_i+\blambda(N(\mbf{c}^{\beta\sigma(i)}))+\blambda(N(\mbf{c}^{\bar\beta\sigma(i)}))\\-\blambda(N(\mbf{c}^{\beta\sigma(i)})\cap
  N(\mbf{a}_1^{\sigma_x(i)}))-\blambda(N(\mbf{c}^{\bar\beta\sigma(i)})\cap
  N(\mbf{b}_1^{\sigma_y(i)}))}\right)\right)\right]d\mbf{x}_1^{n+m+2k}\geq 0.
\end{aligned}
\end{equation}

It is enough to prove that the integrand in positive for every
$\mbf{x}_1^{n+m+2k}$.  Symmetrizing the expression, by replacing
$\beta_j$ with $\bar\beta_j$, we obtain the following equivalent
expression.
\begin{equation}
\begin{aligned}\label{eq:fkg-beta-3}
  0&\leq \EE_\beta\left[\exp\left(-\sum_{i=1}^{n+m+2k}x_i\blambda(N(\mbf{c}^{\beta\sigma(i)}))+x_i\blambda(N(\mbf{c}^{\bar\beta\sigma(i)}))\right)\right.\\
   &\IndI\left.\times\left(
     \substack{\exp\left(\sum_{i}x_i\blambda(N(\mbf{c}^{\beta\sigma(i)})\cap
     N(\mbf{a}_1^{\sigma_x(i)}))\right)-\exp\left(\sum_{i}x_i\blambda(N(\mbf{c}^{\bar\beta\sigma(i)})\cap
     N(\mbf{a}_1^{\sigma_x(i)}))\right)}\right)\right.\\
   &\IndI\left.\times\left(
     \substack{\exp\left(\sum_{i}x_i\blambda(N(\mbf{c}^{\beta\sigma(i)})\cap
     N(\mbf{b}_1^{\sigma_y(i)}))\right)-\exp\left(\sum_{i}x_i\blambda(N(\mbf{c}^{\bar\beta\sigma(i)})\cap
     N(\mbf{b}_1^{\sigma_y(i)}))\right)}\right)\vphantom{\sum_1^n}\right].
\end{aligned}
\end{equation}
To prove this, we use the FKG inequality on the lattice
$\{0,1\}^{n+m+2k}$ with measure
\begin{align*}
  \nu(\mbf{\beta})=\exp\left(-\sum_{i=1}^{n+m+2k}x_i\blambda(N(\mbf{c}^{\beta\sigma(i)}))+x_i\blambda(N(\mbf{c}^{\bar\beta\sigma(i)}))\right).
\end{align*}

\begin{claim}\label{claim:fkg-one}
  The measure $\nu$ is log-submodular.
\end{claim}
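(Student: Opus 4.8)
The plan is to take logarithms and reduce to a submodularity statement. Writing $g_i(\mbf{\beta}):=\blambda(N(\mbf{c}^{\beta\sigma(i)}))+\blambda(N(\mbf{c}^{\bar\beta\sigma(i)}))$, we have $\nu(\mbf{\beta})=\exp\bigl(-\sum_i x_i\, g_i(\mbf{\beta})\bigr)$ with every $x_i>0$, so it suffices to prove that each $g_i$ is submodular on the hypercube indexed by the $k$ particles of $C$ (for the lattice order specified below). The base object is the set function $f(T):=\blambda\bigl(N(\{c_j:j\in T\})\bigr)=\lambda\bigl(\bigcup_{j\in T}B(p_{c_j},1)\bigr)$, $T\subseteq[k]$: because the $c_j$ all carry the colour $\ttR$, this is the volume of a union of unit balls, hence monotone and submodular (the marginal volume contributed by a further ball can only shrink as the current union grows). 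So the whole claim comes down to showing that the particular combination $g_i$ inherits submodularity from $f$.

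Next I would unwind the combinatorics of $\mbf{c}^{\beta\sigma(i)}$. Put $U_i=\{j:S_{i,j}=1\}$ and $V_i=\{j:T_{i,j}=1\}$, so that $\mbf{c}^{\beta\sigma(i)}$ keeps $c_j$ for $j\in U_i$ with $\beta_j=1$ and for $j\in V_i$ with $\beta_j=0$, while $\mbf{c}^{\bar\beta\sigma(i)}$ keeps the complementary choices. An index $j\in U_i\cap V_i$ lies in \emph{both} sets regardless of $\beta_j$, an index $j\notin U_i\cup V_i$ lies in neither, and each \emph{active} index $j\in U_i\triangle V_i$ lies in exactly one of the two sets, with the assignment governed by $\beta_j$. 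The crucial — and, I expect, most delicate — point is that this assignment has a globally consistent orientation: since the path $\sigma$ increments one coordinate per step, the time $i_U(j)$ at which $j$ first enters $U_\bullet$ differs from the time $i_V(j)$ at which it first enters $V_\bullet$, and throughout the entire window $\min(i_U(j),i_V(j))\le i<\max(i_U(j),i_V(j))$ in which $j$ is active it sits on whichever side entered first. Partitioning $[k]=I_1\sqcup I_2$ by the sign of $i_U(j)-i_V(j)$ and setting $\gamma_j=\beta_j$ for $j\in I_1$, $\gamma_j=1-\beta_j$ for $j\in I_2$, one then checks that for the $\mbf{\beta}$-independent disjoint sets $Q_i:=U_i\cap V_i$ and $W_i:=U_i\triangle V_i$ one has the clean description $\mbf{c}^{\beta\sigma(i)}=\{c_j:j\in Q_i\}\cup\{c_j:j\in W_i,\ \gamma_j=1\}$ and $\mbf{c}^{\bar\beta\sigma(i)}=\{c_j:j\in Q_i\}\cup\{c_j:j\in W_i,\ \gamma_j=0\}$.

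Granting this, the submodularity of $g_i$ in the $\mbf{\gamma}$-coordinates is immediate from three standard facts about submodular set functions: the contraction $f_{Q_i}(\cdot):=f(Q_i\cup\cdot)$ is submodular on subsets of $W_i$; a submodular set function pulled back along the lattice isomorphism $\mbf{\gamma}\mapsto\{j\in W_i:\gamma_j=1\}$ is submodular in $\mbf{\gamma}$, and pulled back along the anti-isomorphism $\mbf{\gamma}\mapsto\{j\in W_i:\gamma_j=0\}$ it is still submodular (precomposition with complementation preserves submodularity); and a sum of submodular functions is submodular. Hence $\log\nu=-\sum_i x_i g_i$ is supermodular in $\mbf{\gamma}$, i.e.\ $\nu$ is log-submodular for the hypercube lattice that keeps the usual order on the $I_1$-coordinates and reverses it on the $I_2$-coordinates. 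This is exactly the lattice structure needed downstream: with respect to it $\nu$ is invariant under global complementation $\mbf{\gamma}\mapsto\mbf{1}-\mbf{\gamma}$ and each $\mbf{c}^{\beta\sigma(i)}$, hence each $\blambda(N(\mbf{c}^{\beta\sigma(i)})\cap N(\mbf{a}_1^{\sigma_x(i)}))$ and its $\mbf{b}$-analogue, is increasing, so eq.~\ref{eq:fkg-beta-3} follows by symmetrising the product $(G-GT)(H-HT)$ and applying the positive- and negative-association consequences of the FKG property to the increasing pair $G,H$ and to $G$ against the decreasing $HT$, where $T$ is complementation. The only step that genuinely requires care is the orientation-consistency claim; everything else is bookkeeping together with textbook submodular calculus.
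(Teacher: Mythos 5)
Your argument is correct, but it takes a genuinely different route from the paper, and in fact it is the more careful one. The paper proves Claim~\ref{claim:fkg-one} by brute force: it expands $\blambda(N(\mbf{c}^{\beta\sigma(i)}))$ by inclusion--exclusion into the $S$-part, the $T$-part and their cross intersection, and checks sign by sign that the second difference along $\beta\vee\gamma,\beta\wedge\gamma,\beta,\gamma$ (in the \emph{standard} coordinatewise order on the $\beta$'s) is non-positive, using only monotonicity of $A\mapsto N(A)$; no submodularity machinery and no reindexing of coordinates appears. You instead first normalize orientations -- observing that, because $\sigma_z(i)$ and $\sigma_w(i)$ are nondecreasing along the increasing path, for each $j$ one of $S_{\cdot,j}$, $T_{\cdot,j}$ dominates the other for all $i$, so after flipping the coordinates in your set $I_2$ each $\mbf{c}^{\beta\sigma(i)}$ becomes $Q_i$ together with a plain up-set (resp.\ down-set) selection from $W_i$ -- and then conclude by textbook submodular calculus for the coverage function $T\mapsto\lambda(\cup_{j\in T}B(p_{c_j},1))$ (contraction, pullback along complementation, nonnegative sums). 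What your route buys is precisely the point the paper leaves tacit: the proof of Claim~\ref{claim:fkg-two} invokes the hypothesis $S_{i,l}\geq T_{i,l}$ without justification, and your orientation-consistency argument is exactly what legitimizes that reduction; moreover, without this normalization the per-index inequality in the standard order can genuinely fail (take $\tau$ a transposition, a path step with $\sigma_z(i)=\sigma_w(i)=1$, and two particles of $C$ with overlapping unit balls: the second difference of $\blambda(N(\mbf{c}^{\beta\sigma(i)}))+\blambda(N(\mbf{c}^{\bar\beta\sigma(i)}))$ at $\beta=(1,0)$, $\gamma=(0,1)$ equals twice the overlap volume, which is positive), so the twisted (equivalently, flipped-coordinate) lattice you work with is the statement that should be proved and is the one actually used downstream in eq.~\ref{eq:fkg-beta-3}. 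What the paper's computation buys in exchange is self-containedness: it never appeals to general facts about submodular set functions, only to elementary set inclusions. Your final remarks about monotonicity and the symmetrized product go beyond the claim under review but are consistent with Claims~\ref{claim:fkg-one} and~\ref{claim:fkg-two} as used in the appendix.
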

\begin{proof}
  Let $\mbf{\beta},\mbf{\gamma}\in\{0,1\}^{n+m+2k}$.  Then,
  \begin{equation}
  \begin{aligned}
    &\left(\substack{\blambda
      (N(\{c_j:S_{i,j}=1,\beta_j\vee\gamma_j=1\}\cup\{c_j:T_{i,j}=1,\beta_j\vee\gamma_j=0\}))-\blambda (N(\{c_j:S_{i,j}=1,\beta_j=1\}\cup\{c_{j}:T_{i,j}=1,\beta_j=0\}))\\
    -\blambda (N(\{c_j:S_{i,j}=1,\gamma_j=1\}\cup\{c_{j}:T_{i,j}=1,\gamma_j=0\}))+\blambda (N(\{c_j:S_{i,j}=1,\beta_j\wedge\gamma_j=1\}\cup\{c_j:T_{i,j}=1,\beta_j\wedge\gamma_j=0\}))}\right)\\
    &=\left(\substack{\blambda [N(\{c_j:S_{i,j}=1,\beta_j\vee\gamma_j=1\})]-\blambda [N(\{c_j:S_{i,j}=1,\beta_j=1\})]\\
    -\blambda [N(\{c_j:S_{i,j}=1,\gamma_j=1\})]+\blambda
    [N(\{c_j:S_{i,j}=1,\beta_j\wedge\gamma_j=1\})]}\right)\\
    &\IndII+\left(\substack{\blambda [N(\{c_j:T_{i,j}=1,\beta_j\vee\gamma_j=0\})]-\blambda [N(\{c_{j}:T_{i,j}=1,\beta_j=0\})]\\
    -\blambda [N(\{c_{j}:T_{i,j}=1,\gamma_j=0\})]+\blambda [N(\{c_j:T_{i,j}=1,\beta_j\wedge\gamma_j=0\})]}\right)\label{eq:submod-1}\\
    &\IndII-\left(\substack{\blambda [N(\{c_j:S_{i,j}=1,\beta_j\vee\gamma_j=1\})\cap
      N(\{c_j:T_{i,j}=1,\beta_j\vee\gamma_j=0\})]\\-\blambda [N(\{c_j:S_{i,j}=1,\beta_j=1\})\cap
      N(\{c_{j}:T_{i,j}=1,\beta_j=0\})]\\
    -\blambda [N(\{c_j:S_{i,j}=1,\gamma_j=1\})\cap
    N(\{c_{j}:T_{i,j}=1,\gamma_j=0\})]\\+\blambda [N(\{c_j:S_{i,j}=1,\beta_j\wedge\gamma_j=1\})\cap
    N(\{c_j:T_{i,j}=1,\beta_j\wedge\gamma_j=0\})]}\right).
  \end{aligned}
\end{equation}

  Let us look at the first term in eq.~\ref{eq:submod-1}.  We have
  \begin{equation*}
  \begin{aligned}
    &\left(\substack{\blambda [N(\{c_j:S_{i,j}=1,\beta_j\vee\gamma_j=1\})]-\blambda [N(\{c_j:S_{i,j}=1,\beta_j=1\})]\\
    -\blambda [N(\{c_j:S_{i,j}=1,\gamma_j=1\})]+\blambda
    [N(\{c_j:S_{i,j}=1,\beta_j\wedge\gamma_j=1\})]}\right) \\
    &= \left(\substack{\blambda
      [N(\{c_j:S_{i,j}=1,\beta_j\wedge\gamma_j=1\})]\\-\blambda
    [N(\{c_j:S_{i,j}=1,\beta_j=1\})\cap N(\{c_j:S_{i,j}=1,\gamma_j=1\})]}\right),
  \end{aligned}
\end{equation*}
which is non-positive since
$N(\{c_j:S_{i,j}=1,\beta_j\wedge\gamma_j=1\})$ is contained in both
$N(\{c_j:S_{i,j}=1,\beta_j=1\})$ and
$N(\{c_j:S_{i,j}=1,\gamma_j=1\})$.

  Similarly, we may prove that the second term in
  eq.~\ref{eq:submod-1} is non-positive.  For the third term in that
  equation, we have:
  \begin{align*}
    &-\left(\substack{-\blambda [N(\{c_j:S_{i,j}=1,\gamma_j=1\})\cap
      N(\{c_{j}:T_{i,j}=1,\gamma_j=0\})]\\-\blambda [N(\{c_j:S_{i,j}=1,\beta_j=1\})\cap
      N(\{c_{j}:T_{i,j}=1,\beta_j=0\})]\\
    +\blambda [N(\{c_j:S_{i,j}=1,\beta_j\vee\gamma_j=1\})\cap
    N(\{c_j:T_{i,j}=1,\beta_j\vee\gamma_j=0\})]\\+\blambda N(\{c_j:S_{i,j}=1,\beta_j\wedge\gamma_j=1\})\cap
    N(\{c_j:T_{i,j}=1,\beta_j\wedge\gamma_j=0\})}\right)\\
    &\leq -\left(\substack{-\blambda [N(\{c_j:S_{i,j}=1,\gamma_j=1\})\cap
      N(\{c_{j}:T_{i,j}=1,\beta_j\vee\gamma_j=0\})]\\-\blambda [N(\{c_j:S_{i,j}=1,\beta_j\wedge\gamma_j=1\})\cap
      N(\{c_{j}:T_{i,j}=1,\beta_j=0\})]\\
    +\blambda [N(\{c_j:S_{i,j}=1,\beta_j\vee\gamma_j=1\})\cap
    N(\{c_j:T_{i,j}=1,\beta_j\vee\gamma_j=0\})]\\+\blambda [N(\{c_j:S_{i,j}=1,\beta_j\wedge\gamma_j=1\})\cap
    N(\{c_j:T_{i,j}=1,\beta_j\wedge\gamma_j=0\})]}\right)\\
    &\leq 0.
  \end{align*}

  By symmetry,
  \begin{align*}
    &\left(\substack{\blambda N(\{c_j:S_{i,j}=1,\beta_j\vee\gamma_j=0\}\cup\{c_j:T_{i,j}=1,\beta_j\vee\gamma_j=1\})-\blambda N(\{c_j:S_{i,j}=1,\beta_j=0\}\cup\{c_{j}:T_{i,j}=1,\beta_j=1\})\\
    -\blambda
    N(\{c_j:S_{i,j}=1,\gamma_j=0\}\cup\{c_{j}:T_{i,j}=1,\gamma_j=1\})+\blambda
    N(\{c_j:S_{i,j}=1,\beta_j\wedge\gamma_j=0\}\cup\{c_j:T_{i,j}=1,\beta_j\wedge\gamma_j=1\})}\right)\leq 0.
  \end{align*}
  Therefore,
  \begin{align*}
    &\sum_{i}^{}-x_i\blambda(N(\mbf{c}^{\beta\vee\gamma\sigma(i)}))-x_i\blambda(N(\mbf{c}^{\bar{\beta\vee\gamma}\sigma(i)})-\sum_{i}^{}x_i\blambda(N(\mbf{c}^{\beta\wedge\gamma\sigma(i)}))+x_i\blambda(N(\mbf{c}^{\bar{\beta\wedge\gamma}\sigma(i)})\\
    &+\sum_{i}^{}x_i\blambda(N(\mbf{c}^{\beta\sigma(i)}))+x_i\blambda(N(\mbf{c}^{\bar{\beta}\sigma(i)})+\sum_{i}^{}x_i\blambda(N(\mbf{c}^{\gamma\sigma(i)}))+x_i\blambda(N(\mbf{c}^{\bar{\gamma}\sigma(i)})\\
    &\geq 0.
  \end{align*}
  Consequently,
  $\nu(\mbf{\beta}\vee\mbf{\gamma})\nu(\mbf{\beta}\wedge\mbf{\gamma})\geq
  \nu(\mbf{\beta})\nu(\mbf{\gamma})$.
\end{proof}

Now we show that the two relevant functions in eq.~\ref{eq:fkg-beta-3}
are increasing in $\beta$.

\begin{claim}\label{claim:fkg-two}
  The functions
  \begin{align*}
    f(\mbf{\beta})&=\exp\left(\sum_{i}x_i\blambda(N(\mbf{c}^{\beta\sigma(i)})\cap
      N(\mbf{a}_1^{\sigma_x(i)}))\right)\\
    &\IndI-\exp\left(\sum_{i}x_i\blambda(N(\mbf{c}^{\bar\beta\sigma(i)})\cap
                    N(\mbf{a}_1^{\sigma_x(i)}))\right),\\
    \textrm{and }g(\mbf{\beta}) &=\exp\left(\sum_{i}x_i\blambda(N(\mbf{c}^{\beta\sigma(i)})\cap
                                  N(\mbf{b}_1^{\sigma_y(i)}))\right)\\
    &\IndI-\exp\left(\sum_{i}x_i\blambda(N(\mbf{c}^{\bar\beta\sigma(i)})\cap
                                 N(\mbf{b}_1^{\sigma_y(i)}))\right)
  \end{align*}
  are increasing in $\beta$.
\end{claim}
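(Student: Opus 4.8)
The plan is to deduce both statements from a single set-monotonicity fact about the prefixes $\mbf{c}^{\beta\sigma(i)}$, and then to push that monotonicity through the exponentials. Write $F(\mbf{\beta}):=\sum_{i=1}^{n+m+2k}x_i\,\blambda\!\left(N(\mbf{c}^{\beta\sigma(i)})\cap N(\mbf{a}_1^{\sigma_x(i)})\right)$, so that $f(\mbf{\beta})=e^{F(\mbf{\beta})}-e^{F(\mbf{\bar\beta})}$ (where $F(\mbf{\bar\beta})$ denotes the same expression with $\mbf{\bar\beta}$ substituted for $\mbf{\beta}$, which plugs $\mbf{\bar\beta}$ into the $\mbf{c}^{\cdot\sigma(i)}$ slot), and note that $g$ has exactly this form with $\mbf{b}$ and $\sigma_y$ in place of $\mbf{a}$ and $\sigma_x$. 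Since $\mbf{\beta}\leq\mbf{\beta}'$ is equivalent to $\mbf{\bar\beta}\geq\mbf{\bar\beta}'$, once $F$ is known to be non-decreasing on $\{0,1\}^k$ we get $F(\mbf{\beta})\leq F(\mbf{\beta}')$ and $F(\mbf{\bar\beta})\geq F(\mbf{\bar\beta}')$, hence $f(\mbf{\beta})=e^{F(\mbf{\beta})}-e^{F(\mbf{\bar\beta})}\leq e^{F(\mbf{\beta}')}-e^{F(\mbf{\bar\beta}')}=f(\mbf{\beta}')$; the same works verbatim for $g$. So the whole claim reduces to showing that $F$ is monotone.

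For that, since each $x_i\geq 0$ and each $N(\mbf{a}_1^{\sigma_x(i)})$ is a fixed set not involving $\mbf{\beta}$, it is enough to check that $\mbf{c}^{\beta\sigma(i)}\subseteq C$ is non-decreasing in $\mbf{\beta}$ for every $i$: monotonicity then propagates through $N(\cdot)$, through intersection with the fixed set $N(\mbf{a}_1^{\sigma_x(i)})$, and through $\blambda(\cdot)$, all of which are monotone. Fixing $i$ and $j$, the defining description of $\mbf{c}^{\beta\sigma(i)}$ says that $c_j$ belongs to it precisely when $\beta_j=1$ and $S_{i,j}=1$, or when $\beta_j=0$ and $T_{i,j}=1$; so, as a function of $\beta_j$ with the other coordinates frozen, the indicator $\1(c_j\in\mbf{c}^{\beta\sigma(i)})$ equals $S_{i,j}$ at $\beta_j=1$ and $T_{i,j}$ at $\beta_j=0$. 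The crucial point is the inequality $S_{i,j}\geq T_{i,j}$, valid for all $i,j$: $S_{i,j}$, resp. $T_{i,j}$, records whether the copy of $c_j$ contributing to $\mbf{c}$, resp. to $\bar{\mbf{c}}$, occurs among the first $i$ steps of the path $\sigma$, and we choose the representative of the $\cF$-equivalence class so that for every element of $C$ its $\mbf{c}$-copy precedes its $\bar{\mbf{c}}$-copy; such a representative plainly exists (for each element, declare the earlier of its two occurrences to be the $\mbf{c}$-copy). With $S_{i,j}\geq T_{i,j}$ in hand, $\1(c_j\in\mbf{c}^{\beta\sigma(i)})$ is non-decreasing in $\beta_j$, hence $\mbf{c}^{\beta\sigma(i)}$ is non-decreasing in $\mbf{\beta}$, which finishes the reduction and thus proves that both $f$ and $g$ are increasing.

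The step I expect to require the most care is exactly this combinatorial bookkeeping behind $S_{i,j}\geq T_{i,j}$: without a suitable choice of representative the membership of $c_j$ in $\mbf{c}^{\beta\sigma(i)}$ could instead be non-increasing in $\beta_j$, so one must be explicit that the class may be presented with all $\mbf{c}$-copies preceding the corresponding $\bar{\mbf{c}}$-copies. I would also record, to reassure the reader about the downstream use of this claim in eq.~\ref{eq:fkg-beta-3}, that this choice of representative disturbs neither the log-submodularity of $\nu$ (Claim~\ref{claim:fkg-one}, whose proof invokes no relation between $S$ and $T$) nor the symmetries $\nu(\mbf{\bar\beta})=\nu(\mbf{\beta})$, $f(\mbf{\bar\beta})=-f(\mbf{\beta})$, $g(\mbf{\bar\beta})=-g(\mbf{\beta})$; the latter force $\EE_\nu f=\EE_\nu g=0$, which is precisely what turns the FKG inequality $\EE_\nu[fg]\geq\EE_\nu f\,\EE_\nu g$ into the required $\EE_\nu[fg]\geq 0$.
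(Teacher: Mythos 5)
Your proof is correct, and it reaches the monotonicity by a more elementary route than the paper. The paper proves the same two facts you need (the exponent built from $\mbf{c}^{\beta\sigma(i)}$ is non-decreasing in $\mbf{\beta}$, the one built from $\mbf{c}^{\bar\beta\sigma(i)}$ is non-increasing), but it does so by expanding $h(\mbf{\beta})=\sum_i x_i\blambda(N(\mbf{c}^{\beta\sigma(i)})\cap N(\mbf{a}_1^{\sigma_x(i)}))$ via inclusion--exclusion into a multilinear polynomial in the quantities $\beta_jS_{i,j}+\bar\beta_jT_{i,j}$ and then computing the discrete derivative in $\beta_l$ explicitly, identifying it as $\sum_i x_i(S_{i,l}-T_{i,l})$ times the measure of a set difference, which is nonnegative once $S_{i,l}\geq T_{i,l}$. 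You bypass that computation entirely: the observation that $\1(c_j\in\mbf{c}^{\beta\sigma(i)})$ equals $S_{i,j}$ at $\beta_j=1$ and $T_{i,j}$ at $\beta_j=0$, so that $S_{i,j}\geq T_{i,j}$ makes the set map $\mbf{\beta}\mapsto\mbf{c}^{\beta\sigma(i)}$ coordinatewise monotone, and monotonicity then passes through $N(\cdot)$, intersection with the fixed set, $\blambda$, the nonnegative weights $x_i$, and the exponential, is a genuine simplification that avoids the $q_{i,J}$ bookkeeping. Both arguments hinge on the same inequality $S_{i,j}\geq T_{i,j}$, which the paper invokes with only the phrase ``since we have assumed that $S_{i,l}\geq T_{i,l}$''; your explicit justification --- that since both sides of the conditional-expectation inequality are $\cF$-measurable one may choose the representative of each equivalence class so that every $\mbf{c}$-copy precedes its $\bar{\mbf{c}}$-copy --- supplies exactly the missing bookkeeping, and your closing remark that this choice affects neither Claim~\ref{claim:fkg-one} nor the symmetries forcing $\EE_\nu f=\EE_\nu g=0$ correctly accounts for the downstream use in eq.~\ref{eq:fkg-beta-3}.
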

\begin{proof}{}
  Let
  $h(\mbf{\beta})=\sum_{i}x_i\blambda(N(c^{\beta\sigma(i)})\cap
  N(\mbf{a}_1^{\sigma_x(i)}))$.  For any $J\subseteq[k]$, let
  $q_{i,J}=\blambda(\cap_{j\in J}N(c_j)\cap
  N(\mbf{a}_1^{\sigma_x(i)}))$.  Using the inclusion-exclusion
  formula, we may write
  \begin{align*}
    h(\mbf{\beta})=\sum_{i}x_i\sum_{J\subseteq[k]}(-1)^{|J|-1}q_{i,J}\prod_{j\in
    J}(\beta_jS_{i,j}+\bar\beta_jT_{i,j}).
  \end{align*}

  Now, let $\beta_1,\ldots,\beta_{k}$ be given.  Fix $l\in [k]$.
  Fixing all $\beta_j$, $j\neq l$ and taking the difference of the
  values of $h$ when $\beta_l=1$ and $\beta_l=0$, we obtain:
  \begin{align*}
    &h(\mbf{\beta},\beta_l=1)-h(\mbf{\beta},\beta_l=0)\\
    &=\sum_{i=1}^{n+m+2k}x_i(S_{i,l}-T_{i,l})\sum_{J\subseteq[k-1]}(-1)^{|J|}q_{i,\{Jl\}}\prod_{j\in
      J}(\beta_jS_{i,j}+\bar\beta_jT_{i,j})\\
    &=\sum_{i=1}^{n+m+2k}x_i(S_{i,l}-T_{i,l})\blambda[(N(c_l)\cap
      N(\mbf{a}_1^{\sigma_x(i)}))\backslash
      N(c_1^{\sigma_z(i)}\backslash c_l)]\\
    &\geq  0,
  \end{align*}
  since we have assumed that $S_{i,l}\geq T_{i,l}$.  Similarly, taking
  \begin{align*}
    h'(\mbf{x})&=\sum_ix_i\blambda(N(c^{\beta\sigma(i)})\cap
                 N(\mbf{a}_1^{\sigma_x(i)}))\\
               &=\sum_{i}x_i\sum_{J\subseteq[k]}(-1)^{|J|-1}q_{i,J}\prod_{j\in
                 J}(\bar\beta_jS_{i,j}+\beta_jT_{i,j}),
  \end{align*}
  we have
  \begin{align*}
    &h'(\mbf{\beta},\beta_l=1)-h'(\mbf{\beta},\beta_l=0)\\
    &=\sum_{i=1}^{n+m+2k}x_i(T_{i,l}-S_{i,l})\blambda[(N(c_l)\cap
      N(\mbf{a}_1^{\sigma_x(i)}))\backslash
      N(c_1^{\sigma_w(i)}\backslash c_l)]\\
    &\leq  0 .
  \end{align*}
  Thus, $f(\mbf{\beta},\beta_l=1)-f(\mbf{\beta},\beta_l=0)\geq 0$.  By
  symmetry in the problem, this is also true for $g$.
\end{proof}

We are now in a position to apply the FKG theorem to the RHS of
eq.~\ref{eq:fkg-beta-3}, and since
\begin{align*}
  & \EE_\beta\left[e^{-\sum_{i=1}^{n+m+2k}x_i\blambda(N(\mbf{c}^{\beta\sigma(i)}))+x_i\blambda(N(\mbf{c}^{\bar\beta\sigma(i)}))}\right.\\
  &\Ind\left.\times\left(
    e^{\sum_{i}x_i\blambda(N(\mbf{c}^{\beta\sigma(i)})\cap
    N(\mbf{a}_1^{\sigma_x(i)}))}-e^{\sum_{i}x_i\blambda(N(\mbf{c}^{\bar\beta\sigma(i)})\cap
    N(\mbf{a}_1^{\sigma_x(i)}))}\right)\right]\\
  &=0,
\end{align*}
we obtain the result.

\clearpage
\section{Table of Notation}
\label{sec:tabl-nota}
\renewcommand{\arraystretch}{1.2}
\hypersetup{hidelinks} \rowcolors{1}{}{gray!20}
\begin{longtable}{|l|m{0.6\textwidth}|} \hline
\hyperlink{def:formaldesc}{$D$} & Domain of interaction of particles.
A metric space \\ \hyperlink{def:colorset}{$\mbf{C}:=\{\ttR, \ttB\}$}
  & The set of types of particles, \emph{reds} and \emph{blues} \\
\hyperlink{def:oppositecolor}{$\bar{\ttR}:=\ttB$, $\bar{\ttB}:=\ttR$}
  & Opposite color\\ \hyperlink{def:formaldesc}{$\lambda$}
  & Radon measure on $D$\\ \hyperlink{def:formaldesc}{$m_{\mbf{C}}$}
  & Counting measure on $\mbf{C}$\\
\hyperlink{def:formaldesc}{$\blambda$}
  & $\blambda:=\lambda\otimes m_{\mbf{C}}$\\
\hyperlink{def:formaldesc}{$\mu$}
  & The parameter of the exponential random variables describing
patience of particles.\\ \hyperlink{def:notation}{$M(D,K)$}
  & Space of simple Radon counting measures on $D$, with marks in
$K$\\ \hyperlink{def:notation}{$O(D,K)$}
  & Space of simple locally-finite ordered subsets of $D$, with marks
in $K$ \\ \hyperlink{def:notation}{$|\gamma|$}, $\gamma\in O(D,K)$
  & Number of elements in $\gamma$\\
\hyperlink{def:gamma-sup-x}{$\gamma^x$}
  &The set $\{y\in\gamma:y<_\gamma x\}$ ordered as in $\gamma$ \\
\hyperlink{def:p-x-b-x}{$p_x,\ b_x,\ c_x,\ w_x$}, $x\in
D\times\mbf{C}$
  & Position, birth time, color and patience of $x$, i.e.,
$x=(p_x,c_x)$.\\ \hyperlink{def:N}{$N(A)$}, $A\subset D\times\mbf{C}$
  & $N(A):=\cup_{x\in A}\{y\in D\times\mbf{C}:c_y\neq c_x,
d(p_y,p_x)<1 \}$.\\ \hyperlink{def:N}{$W_x$}
  & Region of maximum priority of $x\in\gamma$.  $W_x=N(x)\backslash
N(\gamma_x)$\\ \hyperlink{def:formaldesc}{$\eta_t\in O(D,\mbf{C})$}
  & Ordered collection of particles present in the system at time
$t$\\ \hyperlink{def:Phi}{$\Phi$}
  & Poisson arrival process used in the construction of the process.
It is a random element of $M(D\times \bbR^+,\mbf{C}\times\bbR^+)$\\
\hyperlink{def:specials}{$S_t$}
  & Set of discrepancies $\eta^0_t\triangle \eta^1_t$ in the CFTP
construction\\ \hyperlink{def:kappa}{$\kappa$}
  & Killing function\\ \hyperlink{def:matching-func}{$m$}
  & Matching function\\ \hyperlink{def:matching-marks}{$\ttu$, $\ttm$}
  & Marks to indicate whether a particle is matched or unmatched in
    the detailed processes\\
  \hyperlink{def:hat-eta_t}{$\hat\eta_t$}
  & Backward detailed process\\ \hyperlink{def:check-eta_t}{$\check\eta_t$}
  & Forward detailed process\\
\hyperlink{def:Q-i-u}{$Q^i_\ttu(\gamma)$}
  & For $\gamma\in O(D,\mbf{C}\times\{\ttu,\ttm\})$, it is the number
of unmatched particles among the first $i$ particles of $\gamma$.  \\
\hyperlink{def:Q-i-m}{$Q^i_\ttm(\gamma)$}
  & For $\gamma\in O(D,\mbf{C}\times\{\ttu,\ttm\})$, it is the number
of matched particles excluding the first $i$ particles of $\gamma$.
\\ \hyperlink{def:hat-pi}{$\hat\pi$}
  & Density of the stationary measure of the Backward detailed
    process\\
  \hyperlink{def:pi}{$\pi$}& Density of the stationary
measure of the process $\eta_t$\\
\hyperlink{def:tilde-pi}{$\tilde\pi$}& Janossy density of stationary
version of the point process $\eta_0$\\
\hyperlink{def:P(C)}{$\mathcal{P}(C)$} & Set of all permutations of
the elements of a finite set $C$.  \\
\hyperlink{def:paths-space}{$P(m,n)$}& The set of all paths in a
square lattice from $(0,0)$ to $(m,n)$\\
\hyperlink{def:order-bijection}{$(\sigma,X_1^n,Y_1^m)$} & A
representation of the map that gives the canonical bijection between
$P(n,m)\times \mathcal{P}(x_1^n)\times\mathcal{P}(y_1^m)$ and
$\mathcal{P}(x_1^n,y_1^m)$.  \\ \hline
\end{longtable}

\section*{Acknowledgements}
\label{sec:acknowledgements}
The author would like to thank his PhD advisor, Prof. Fran\c{c}ois
Baccelli, for many valuable discussion on this problem.

\bibliographystyle{abbrv}
\bibliography{bibliography} 
\end{document}